\DeclareFontFamily{OMS}{rsfs}{\skewchar\font'60}
\DeclareFontShape{OMS}{rsfs}{m}{n}{<-5>rsfs5 <5-7>rsfs7 <7->rsfs10 }{}
\DeclareSymbolFont{rsfs}{OMS}{rsfs}{m}{n}
\DeclareSymbolFontAlphabet{\scr}{rsfs}
\newtheorem{theorem}{Theorem}[section]
\newtheorem*{mainthm}{Main Theorem}
\newtheorem*{thm*}{Theorem}
\newtheorem{lemma}[theorem]{Lemma}
\newtheorem{proposition}[theorem]{Proposition}
\newtheorem{corollary}[theorem]{Corollary}
\theoremstyle{definition}
\newtheorem{definition}[theorem]{Definition}
\theoremstyle{remark}
\newtheorem{remark}[theorem]{Remark}
\newtheorem{question}[theorem]{Question}
\newcommand{\tld}{\widetilde }
\newcommand{\blank}{\underline{\hskip 10pt}}
\newcommand{\sH}{\scr{H}}
\newcommand{\sL}{\scr{L}}
\newcommand{\sM}{\scr{M}}
\newcommand{\mJ}{\mathcal{J}}
\newcommand{\ba}{\mathfrak{a}}
\newcommand{\bm}{\mathfrak{m}}
\newcommand{\bA}{\mathbb{A}}
\newcommand{\bQ}{\mathbb{Q}}
\renewcommand{\O}{\scr O}
\newcommand{\N}{\mathbb {N}}
\newcommand{\R}{\mathbb {R}}
\newcommand{\Q}{\mathbb {Q}}
\renewcommand{\O}{\mbox{$\mathcal{O}$}}
\newcommand{\infinity}{\infty}
\newcommand{\tensor}{\otimes}
\DeclareMathOperator{\Ann}{{Ann}}
\DeclareMathOperator{\id}{{id}}
\DeclareMathOperator{\supp}{{supp}}
\DeclareMathOperator{\Spec}{{Spec}}
\DeclareMathOperator{\Hom}{Hom}
\DeclareMathOperator{\sHom}{{\sH}om}
\def\spec#1.#2.{{\bold S\bold p\bold e\bold c}_{#1}#2}
\def\proj#1.#2.{{\bold P\bold r\bold o\bold j}_{#1}\sum #2}
\def\ring#1.{\scr O_{#1}}
\def\map#1.#2.{#1 \to #2}
\def\longmap#1.#2.{#1 \longrightarrow #2}
\def\factor#1.#2.{\left. \raise 2pt\hbox{$#1$} \right/
\hskip -2pt\raise -2pt\hbox{$#2$}}
\def\pe#1.{\mathbb P(#1)}
\def\pr#1.{\mathbb P^{#1}}
\def\coh#1.#2.#3.{H^{#1}(#2,#3)}
\def\dimcoh#1.#2.#3.{h^{#1}(#2,#3)}
\def\hypcoh#1.#2.#3.{\mathbb H_{\vphantom{l}}^{#1}(#2,#3)}
\def\loccoh#1.#2.#3.#4.{H^{#1}_{#2}(#3,#4)}
\def\dimloccoh#1.#2.#3.#4.{h^{#1}_{#2}(#3,#4)}
\def\lochypcoh#1.#2.#3.#4.{\mathbb H^{#1}_{#2}(#3,#4)}
\def\ses#1.#2.#3.{0  \longrightarrow  #1   \longrightarrow
 #2 \longrightarrow #3 \longrightarrow 0}
\def\sesshort#1.#2.#3.{0
 \rightarrow #1 \rightarrow #2 \rightarrow #3 \rightarrow 0}
\def\iff#1#2#3{
    \hfil\hbox{\hsize =#1 \vtop{\noin #2} \hskip.5cm
    \lower.5\baselineskip\hbox{$\Leftrightarrow$}\hskip.5cm
    \vtop{\noin #3}}\hfil\medskip}
\def\myoplus#1.#2.{\underset #1 \to {\overset #2 \to \oplus}}
\theoremstyle{definition}
\newtheorem{definition-proposition}[theorem]{Definition-Proposition}
\newcommand{\cO}{\mathcal{O}}
\renewcommand{\O}{\mathcal{O}}
\newcommand{\fra}{\mathfrak{a}}
\renewcommand{\to}[1][]{\xrightarrow{\ #1\ }}
\renewcommand{\th}{\ensuremath{^\text{th}}}
\renewcommand{\frm}{\frak{m}}
\newcommand{\frp}{\frak{p}}
\newcommand{\defeq}{\stackrel{\scriptscriptstyle \operatorname{def}}{=}}
\begin{document}

\title{Discreteness and rationality of $F$-jumping numbers on singular varieties}
\author{Manuel Blickle, Karl Schwede, Shunsuke Takagi and Wenliang Zhang}

\address{Fakult\"at f\"ur Mathematik \\ Universit\"at Duisburg-Essen (Campus Essen) \\ Universit\"atsstr.~2 \\ 45117 Essen, Germany}
\email{manuel.blickle@gmail.com}
\address{Department of Mathematics\\ University of Michigan\\ East Hall
530 Church Street \\ Ann~Arbor, Michigan, 48109, USA}
\email{kschwede@umich.edu}
\address{Department of Mathematics\\ Kyushu University, 6-10-1\\ Hakozaki, Higashi-ku\\
Fukuoka 812-8581, Japan}
\email{stakagi@math.kyushu-u.ac.jp}
\address{Department of Mathematics\\ University of Michigan\\ East Hall
530 Church Street \\ Ann~Arbor, Michigan, 48109, USA}
\email{wlzhang@umich.edu}

\subjclass[2000]{13A35, 14B05}
\keywords{tight closure, test ideal, $F$-jumping number, $F$-pure threshold, multiplier ideal, jumping number}

\thanks{The first author is supported by the Deutsche Forschungsgemeinschaft (DFG) through a Heisenberg fellowship and through the SFB/Transregio 45 \emph{Periods, moduli spaces and arithmetic of algebraic varieties}}
\thanks{The second author was partially supported by a National Science Foundation postdoctoral fellowship and by RTG grant number 0502170.}
\thanks{The third author was partially supported by Grant-in-Aid for Young Scientists (B) 20740019 from JSPS and by Program for Improvement of Research Environment for Young Researchers from SCF commissioned by MEXT of Japan.}
\thanks{The fourth author is partially supported by a Spring/Summer 2009 Research Fellowship from Department of Mathematics, University of Michigan.}

\begin{abstract}
We prove that the $F$-jumping numbers of the test ideal $\tau(X; \Delta, \ba^t)$ are discrete and rational under the assumptions that $X$ is a normal and $F$-finite scheme over a field of positive characteristic $p$, $K_X+\Delta$ is $\bQ$-Cartier of index not divisible $p$, and either $X$ is essentially of finite type over a field or the sheaf of ideals $\ba$ is locally principal. This is the largest generality for which discreteness and rationality are known for the jumping numbers of multiplier ideals in characteristic zero.
\end{abstract}
\maketitle
\numberwithin{equation}{theorem}
\section{Introduction}

For a normal $\bQ$-Gorenstein variety $X$, an ideal sheaf $\ba \subseteq \O_X$, and a positive real number $t > 0$ the multiplier ideal $\mJ(X, \ba^t)$ is an ideal sheaf of $\O_X$ that reflects subtle properties of both, the singularities of $X$,   as well as the singularities of elements of $\ba$. In recent years the multiplier ideal has become a central notion in algebraic geometry, as it is a key tool in the birational classification of varieties.
As $t$ increases, the multiplier ideal becomes a smaller (deeper) ideal. Due to the definition of the multiplier ideal in terms of a single resolution of singularities (for all $t$), the values of $t$ where the multiplier ideal changes form a discrete set of rational numbers, called the \emph{jumping numbers} of $(X,\ba)$, see \cite{EinLazSmithVarJumpingCoeffs}. The first jumping number is called the \emph{log canonical threshold} which is an important invariant in the study of birational geometry; see for example \cite{KollarSingularitiesOfPairs}. More generally, the sequence of jumping numbers contains a great deal of information about the singularities of $X$ and elements of $\ba$, see \cite{EinLazSmithVarJumpingCoeffs}, \cite{SaitoMultIdealsBFunctionAndSpectrum}, \cite{BudurMustataSaitoBernsteinSatoPolynomialsOfArbitrary}, \cite{TuckerJumpingNumbersOnRationalSurfaces}.

In characteristic $p > 0$, the test ideal $\tau(X,\ba^t)$ is an analog of the multiplier ideal which enjoys many similar properties (see for example \cite{HaraYoshidaGeneralizationOfTightClosure} and \cite{TakagiFormulasForMultiplierIdeals}), but also differs in some key ways (see \cite{MustataYoshidaTestIdealVsMultiplierIdeals}).
Test ideals are rooted in the theory of tight closure of Hochster and Huneke \cite{HochsterHunekeTC1}, see also \cite{SmithTestIdeals} and \cite{HaraInterpretation}. The variants we are considering here were introduced in \cite{HaraYoshidaGeneralizationOfTightClosure} and \cite{TakagiInterpretationOfMultiplierIdeals}. Just as for multiplier ideals, as one increases $t$, the test ideal $\tau(X, \ba^t)$ becomes a smaller ideal.  Therefore, it is natural to define the \emph{$F$-jumping numbers} of the pair $(X,\ba)$ as those $\alpha$ such that $\tau(X,\ba^{\alpha-\epsilon})$ \emph{strictly} contains $\tau(X,\ba^{\alpha})$ for all $\epsilon > 0$. The set of $F$-jumping numbers, and in particular the first of these, called the $F$-pure threshold which is the analog of the log canonical threshold from characteristic zero, have been studied for example in \cite{TakagiWatanabeFPureThresh}, \cite{MustataTakagiWatanabeFThresholdsAndBernsteinSato},  \cite{HunekeMustataTakagiWatanabeFThresholdsTightClosureIntClosureMultBounds} and \cite{MustataBernsteinSatoPolynomialsInPositive}. Already in \cite{MustataTakagiWatanabeFThresholdsAndBernsteinSato} it was observed that the $F$-jumping numbers share many properties with the jumping numbers of the multiplier ideal, however, one of the most basic properties, namely their discreteness and rationality was left open as a question. It is not at all clear from the definition whether discreteness and rationality hold.  Likewise, the fact that under reduction modulo $p$, the multiplier ideal $\mJ(X, \ba^t)$  agrees with the test ideal $\tau(X, \ba^t)$ for $p \gg 0$ (see \cite{SmithMultiplierTestIdeals}, \cite{HaraInterpretation}, \cite{HaraYoshidaGeneralizationOfTightClosure}, and \cite{TakagiInterpretationOfMultiplierIdeals}) does not help with this question since the value of $t$ may increase the particular $p \gg 0$ you need to use.

However, since the work of the first author with Smith and Musta\c{t}\v{a} \cite{BlickleMustataSmithDiscretenessAndRationalityOfFThresholds} and \cite{BlickleMustataSmithFThresholdsOfHypersurfaces} (see also Hara and Monsky's \cite{HaraMonskyFPureThresholdsAndFJumpingExponents}), one knows that discreteness and rationality of $F$-jumping numbers holds for pairs $(X,\ba^t)$ when $X$ is smooth and $F$-finite and either $X$ is essentially of finite type over a field or $\ba$ is a principal ideal. The techniques used in these previous articles heavily relied on the smoothness of the ambient space $X$ as either the flatness of the Frobenius or $D$-module techniques entered in an essential way. The few known results in the singular case are quite restrictive, see for example \cite{TakagiTakahashiDModulesOverRingsWithFFRT} and \cite{SchwedeTakagiRationalPairs}. In  \cite{KatzmanLyubeznikZhangOnDiscretenessAndRationality} a new proof of the discreteness result for principal ideals in regular local rings was given which seemed less dependent on smoothness.  This slightly extended (out of the $F$-finite case) the result in \cite{BlickleMustataSmithFThresholdsOfHypersurfaces} (see also \cite{Blickle.MinimalGamma}).  This gave hope for a proof in the general singular case, at least for $\ba$ principal\footnote{Partial work in this direction was done in \cite{SchwedeTakagiRationalPairs} where it was shown that the result in \cite{KatzmanLyubeznikZhangOnDiscretenessAndRationality} extends to the case of a Gorenstein strongly $F$-regular ring.}.  Indeed, in this paper we accomplish this (and much more) as we show the discreteness and rationality for $F$-jumping numbers under the same general hypotheses where discreteness and rationality are known for jumping numbers of the multiplier ideal in characteristic zero. Our main result is as follows (see Theorems \ref{ThmMainNonPrincipalCase} and \ref{ThmFFinitePrincipalCase} below for the most general statements).
\begin{mainthm}
    Let $X$ be an $F$-finite scheme over a field of characteristic $p > 0$. Assume that $X$ is normal and $\bQ$-Gorenstein with index not divisible by $p$. Let $\ba$ be a nonzero sheaf of ideals of $\O_X$.
    \begin{enumerate}
        \item If $X$ is in addition \emph{essentially of finite type over a field}, or
        \item $\ba$ is a \emph{principal ideal},
    \end{enumerate}
    then the $F$-jumping numbers of the pair $(X,\ba)$ are a discrete set of rational numbers.
\end{mainthm}
In fact, we show this result in the more general log $\bQ$-Gorenstein setting. That is, we remove the $\bQ$-Gorenstein hypothesis but additionally take an effective $\bQ$-divisor $\Delta$ and assume that $K_X + \Delta$ is $\bQ$-Cartier with index not divisible by $p$.  We then show that discreteness and rationality hold for the jumping numbers of the test ideal $\tau_b(X; \Delta, \ba^t)$ associated to such log $\bQ$-Gorenstein triples (see Definition-Proposition \ref{DefnPropBigTestIdeals}). This generalization is not arbitrary but essential to the proof of part (1) of the Main Theorem. It allows us -- and this is the key new ingredient here -- to use the second author's theory of $F$-adjunction \cite{SchwedeFAdjunction} (reviewed below in Section \ref{SectionQDivisorsAndFSingularities}) to translate the question of discreteness for the test ideals $\tau_b(X,\Delta,\ba^t)$ to a question about yet another variant of test ideals $\tau_b(Y,\nsubseteq Q,\Delta_Y,\ba^t)$ where $Y=\bA^m_k$ is affine $n$-space and $Q$ is a prime ideal of $k[x_1, \ldots, x_n]$. The key point is that the ambient space is affine $n$ space.  Therefore, the methods of \cite{BlickleMustataSmithDiscretenessAndRationalityOfFThresholds}, and in particular the argument bounding the degree of the generators of test ideals, can be adapted to derive discreteness and rationality for these more general test ideals $\tau_b(Y,\nsubseteq Q,\Delta_Y,\ba^t)$; this is carried out in Section \ref{SectionDiscRatKalg}.

Besides in the work of the second author \cite{SchwedeFAdjunction} these test ideals, or variants thereof, have appeared in the work of the third author in \cite{TakagiPLTAdjoint}, \cite{TakagiHigherDimensionalAdjoint}. Roughly speaking, $\tau_b(Y = \Spec S, \nsubseteq Q; \Delta, \ba^t)$ is like the usual test ideal $\tau(Y = \Spec S, \Delta, \ba^t)$ except we restrict the ``test elements'' to the multiplicative set $S \setminus Q$ instead of $S^{\circ}$. The necessary results on this are given in Section \ref{SectionRestrictionOfTestIdeals} which is preceded by an overview to various definitions and characterizations of test ideals in Section \ref{SectionReviewOfGeneralized}. We should emphasize at this point that our viewpoint and most of our techniques rely on the interpretation of the test ideal in terms of $p^{-e}$-linear maps, that is additive maps $\phi_e: R \to R$ such that $\phi_e(r^{p^e}s)=r\phi_e(s)$. If we consider, for simplicity, pairs $(R,\ba^t)$ where $R$ is an $F$-finite domain, then the test ideal $\tau(R,\ba^{t})$ can be characterized as the unique smallest ideal $J$ such that for all $p^{-e}$-linear maps $\phi_e$ one has $\phi_e(\ba^{\lceil t(p^e-1) \rceil}J) \subseteq J$, see Proposition \ref{propCharTestSmallestFixed}.

To prove the second part of the Main Theorem we cannot use the degree argument of the finite type case. As alluded to above, we instead follow the method of the proof in the smooth case of \cite{KatzmanLyubeznikZhangOnDiscretenessAndRationality}, or more precisely, we use a globalized dual version of it. This is carried out in Section \ref{SectionDiscretenessAndRationalityForPrincipal}. A key ingredient of this proof is a result on the stabilization of iterated images of $p^{-e}$-linear maps on finitely generated modules, which may be viewed as a (globalized) dual to a celebrated result of Hartshorne and Speiser
\cite[Proposition 1.11]{HartshorneSpeiserLocalCohomologyInCharacteristicP} (which was generalized by Lyubeznik in \cite{LyubeznikFModulesApplicationsToLocalCohomology}) on the stabilization of kernels of iterated $p^e$-linear maps on co-finite modules. The statement is as follows (cf.~Theorem~\ref{TheoremDualHSLForFFinite}): If $M$ is a coherent $\O_X$-module equipped with a $p^{-e}$-linear map $\phi: M \to M$, then the sequence of images
\[
    M \supseteq \phi(M) \supseteq \phi^2(M) \supseteq \phi^3(M) \ldots
\]
eventually stabilizes. This result (in the $F$-finite case) can be derived from a result of the first author in \cite{Blickle.MinimalGamma}, and in its most general form appeared in \cite[Lemma 13.1]{Gabber.tStruc}.

There are a number of ways in which one may hope to generalize the rationality and discreteness results for jumping numbers of test ideals we obtained here -- some of which are addressed in Section \ref{SectionFurtherRemarks}.  The most pressing of these is probably the question if rationality or discreteness holds without the $\bQ$-Gorenstein hypothesis. Note that even in characteristic zero for the multiplier ideals -- which were recently defined in \cite{DeFernexHacon} in the non $\bQ$-Gorenstein case -- the discreteness and rationality of their jumping numbers are not known.

\subsubsection*{\it Acknowledgments:}
The authors would like to thank Daniel Hernandez for several valuable discussions.  The second and third authors discussed some of these ideas at the \emph{Mathematical Sciences Research Institute} (MSRI) in Berkeley during the winter of 2009.  The first and second authors discussed some of these ideas at the conference on Multiplier Ideals held at the \emph{Mathematisches Forschungsinstitut Oberwolfach} in April of 2009.

\section{Notation and preliminaries on \texorpdfstring{$F$}{F}-singularities}
\label{SectionQDivisorsAndFSingularities}

Throughout this paper, all schemes $X$ will be assumed to be noetherian, separated, and $F$-finite (see Definition \ref{DefnFFinite} below) over a field of positive characteristic $p$. Since we often work with divisors we typically assume that $X$ is also \emph{normal} (see Section \ref{SectionFurtherRemarks} for comments on the non-normal case).

The Frobenius (or $p$\th{} power) map will play a fundamental role in this paper. For any scheme $X$ of characteristic $p > 0$, the absolute Frobenius $X \to[F] X$ is a morphism of schemes defined as the identity map on the underlying topological space of $X$, but the corresponding map of structures sheaves $\mathcal O_X \rightarrow F_*\mathcal O_X$ sends sections to their $p$\th~powers. If $X=\Spec R$ is affine, then the Frobenius $F: R \to R$ is hence given by $F(r)=r^p$ for all $r \in R$. We will often consider the iterated composition of $F$ with itself to obtain the $e\th{}$ power of the Frobenius $F^e$ which is given on sections by the $(p^e)\th$ power map.

\begin{definition}
\label{DefnFFinite}
A scheme $X$ of prime characteristic $p$ is \emph{$F$-finite} if the Frobenius is a finite map, i.e.\  $F_*\cO_X$ is a finitely generated $\cO_X$-module. A ring $R$ is called $F$-finite if $\Spec R$ is $F$-finite.
\end{definition}

Given an $R$-module $M$, we will use $F^e_* M$ to denote the $R$-module which agrees with $M$ as an additive group, but where the multiplication is defined by $r.m = r^{p^e} m$.  This notation is justified since, if $X = \Spec R$ and $\sM$ is the quasi-coherent $\O_X$-module corresponding to $M$, then the push-forward $F^e_* \sM$ is the quasi-coherent sheaf corresponding to $F^e_* M$.

By Gabber \cite[Remark 13.6]{Gabber.tStruc} an affine $F$-finite scheme $X$ possesses a normalized dualizing complex. Its cohomology in degree $-\dim X$ is a canonical module $\omega_X$ for $X$. In general, one knows that $F^{e!}\omega_X := \Hom_{\O_X}(F^e_* \O_X, \omega_X)$ is also a canonical module. Hence, by a uniqueness statement for dualizing complexes in \cite[Theorem V.3.1]{HartshorneResidues} there is a line bundle $\sM$ such that
\[
    F^{e!}\omega_X \cong F^e_*(\omega_X \tensor \sM).
\]
Hence on an affine cover trivializing $\sM$ we have for each member an isomorphism $F^{e!}\omega_X \cong F^e_*\omega_X$ of $F^e_*\O_X$-modules. Summarizing, this shows that any $F$-finite scheme $Y$ has a finite affine cover such that each member $X$ of the cover satisfies the following conditions:
\begin{equation}\label{EqnAdditionalAssumption}\tag{$\dagger$}
\begin{cases}
\text{$X$ has a canonical module $\omega_X$, and} \\
\Hom_{\O_X}(F^e_* \O_X, \omega_X) = (F^e)^! \omega_X \cong F^e_* \omega_X \text{ as $F^e_*\O_X$-modules.}
\end{cases}
\end{equation}
By Proposition \ref{PropPassToAffineCover} below, the proofs of our main results in this paper can be carried out on any finite affine cover. Therefore it poses no restriction to our main results (but eases notation somewhat) to assume throughout the paper that all schemes are affine and satisfy condition (\ref{EqnAdditionalAssumption}) -- hence we will do so from now on. Also note that if $X$ is essentially of finite type over an $F$-finite field, or if $X$ is the spectrum of a local ring, condition (\ref{EqnAdditionalAssumption}) is always satisfied, see \cite[V.10]{HartshorneResidues}. Furthermore, it may be that condition (\ref{EqnAdditionalAssumption}) holds for every $F$-finite noetherian scheme (although we do not know a proof).

\begin{definition}
\label{DefPMinuseLinear}
Given $\O_X$-modules $M$ and $N$, an additive map $\phi : M \rightarrow N$ is called \emph{$p^{-e}$-linear} or \emph{Cartier linear} if for all local sections $r\in \O_X$ and $m \in M$ the relation $\phi(r^{p^e} m) = r \phi(m)$ is satisfied. In other words, a $p^{-e}$-linear map is an $\O_X$-linear map $F^e_* M \to N$ (also denoted by $\phi$).

If $X=\Spec R$ with $R$ reduced and $M = N = R$, a $p^{-e}$-linear map can be viewed as a map $R^{1 \over p^e} \rightarrow R$ where $R^{\frac{1}{p^e}}$ is the purely inseparable extension of $R$ consisting of all $(p^e)$\th{} roots of elements of $R$.
\end{definition}

\begin{remark}
\label{RemarkDefinitionOfComposedPInverseLinear}
A $p^{-e}$-linear map $L \rightarrow M$ and a $p^{-d}$-linear map $M \rightarrow N$ clearly compose to a $p^{-e-d}$-linear map $L \rightarrow N$.  Equivalently, viewing the first map as a map $F^e_* L \rightarrow M$, we can apply the functor $F^d_*$ to obtain $F^{e+d}_* L \rightarrow F^d_* M$ which we compose with the map $F^d_* M \rightarrow N$ to obtain a map $F^{e+d}_* L \to M$. In this way we can compose a $p^{-e}$-linear endomorphism $\phi_e : M \rightarrow M$ with itself to obtain $p^{-ne}$-linear endomorphisms $\phi^n_e : M \rightarrow M$ for all $n > 0$.  We will use $\phi_{ne} : F^{ne}_* M \rightarrow M$ to denote the $R$-linear map corresponding to $\phi_{e}^n$.

Sometimes we will have a $\O_X$-linear map $\phi_e : F^e_* \sL \rightarrow \O_X$ where $\sL$ is a line bundle.  In such a case, we can twist by $\sL$ (using the projection formula) to obtain a map
\[
    F^e_*\sL^{1+p^e} \cong F^e_*(\sL \tensor F^{e*}\sL) \cong F^e_*\sL \tensor \sL \to[\phi_e\tensor \sL] \sL
\]
Pushing forward by $F^e_*$ and composing with $\phi_e$ gives us a map $\phi_{2e} : F^e_* \sL^{1 + p^e} \rightarrow \O_X$.  Repeating this process, we obtain maps $\phi_{ne} : F^{ne}_* \sL^{1 + p^e + \ldots + p^{(n-1)e}} \rightarrow \O_X$.
\end{remark}

\begin{definition}
 A \emph{pair $(X, \Delta)$} is the combined information of a normal separated integral scheme $X$ and an effective $\bQ$-divisor on $X$.  A \emph{triple $(X, \Delta, \ba^t)$} is the combined information of a pair $(X, \Delta)$, a non-negative real number $t \geq 0$, and a non-zero ideal sheaf $\ba$ on $X$.
\end{definition}

For characteristic zero triples and pairs, it is often also assumed that $K_X + \Delta$ is $\bQ$-Cartier (that is, some power is locally trivial in the divisor class group).  In characteristic $p > 0$ this assumption is unnecessary in order to \emph{define} the basic notions. In their study however, the condition that $K_X + \Delta$ is $\bQ$-Cartier with \emph{index not divisible by $p > 0$} is often quite useful as we will see shortly


We recall properties of pairs $(X,\Delta)$ in the \emph{log $\bQ$-Gorenstein setting}. That is, we assume that $K_X + \Delta$ is $\bQ$-Cartier. In fact we assume slightly more, namely that $K_X + \Delta$ is $\bQ$-Cartier with \emph{index not divisible by the characteristic $p$}. This is clearly equivalent to assuming that for some $e \geq 0$ the divisor $(p^e-1)(K_X+\Delta)$ is a Cartier divisor.

In this context, $F$-singularities of pairs were explored in the second author's paper \cite{SchwedeFAdjunction} and most of what follows in this section is taken from there. Similar techniques have however appeared before; see for example \cite{MehtaRamanathanFrobeniusSplittingAndCohomologyVanishing} or \cite[Proof \#2 of Theorem 3.3]{HaraWatanabeFRegFPure}.

At the heart of this treatment of $F$-singularities of pairs is a correspondence between such divisors $\Delta$ and certain $p^{-e}$-linear maps. More precisely, let $X$ denote a normal $F$-finite integral scheme, then there is a bijection of sets:
\begin{equation}
\label{EqnGlobalBijection}\tag{$\star$}
\left\{ \begin{matrix}\text{Effective $\bQ$-divisors $\Delta$ on $X$ such}\\\text{that $(p^e - 1)(K_X + \Delta)$ is Cartier}\end{matrix} \right\} \leftrightarrow \left\{ \begin{matrix}\text{Line bundles $\sL$ and non-zero }\\ \text{elements of $\Hom_{\O_X}(F^e_* \sL, \O_X)$} \end{matrix}\right\} \Big/ \sim
\end{equation}
The equivalence relation on the right side identifies two maps $\phi_1: F^e_* \sL_1 \rightarrow \cO_X$ and $\phi_2 : F^e_* \sL_2 \rightarrow \cO_X$ if there is an isomorphism $\gamma : \sL_1 \rightarrow \sL_2$ and a commutative diagram
\begin{equation*}
\begin{split}
\xymatrix{
 F^e_* \sL_1 \ar[d]_{F^e_* \gamma} \ar[r]^{\phi_1} & \cO_X \ar[d]^{\id} \\
 F^e_* \sL_2 \ar[r]^{\phi_2} & \cO_X \\
}
\end{split}
\end{equation*}
A crucial ingredient in the correspondence (\ref{EqnGlobalBijection}) is Grothendieck duality for the finite morphism $F^e$. We sketch the construction and refer the reader to \cite{SchwedeFAdjunction} for details. Since $F^{e!}\O_X \defeq \sHom_{\O_X}(F^e_*\O_X,\O_X) \cong \O_X((1-p^e)K_X)$ Grothendieck duality \cite[Chapter III, Section 6]{HartshorneResidues}  yields an isomorphism of $F^e_* \O_X$-modules
\[
    F_*^e\sHom_{\O_X}(\sL,\O_X((1-p^e)K_X)) \cong \sHom_{\O_X}(F^e_*\sL,\O_X)
\]
for all $\O_X$-modules $\sL$. Suppose now that we are given $\Delta$ such that $(p^e - 1) (K_X + \Delta)$ is Cartier. With $\sL = \O_X((1-p^e)(K_X + \Delta))$ we obtain from the above isomorphism an inclusion:
\begin{align*}
 \O_X & \subseteq F^e_* \O_X \subseteq F^e_* \O_X((p^e-1)\Delta) \cong F^e_* \sHom_{\O_X}(\O_X((1-p^e)\Delta),\O_X)\\
 & \cong  F^e_* \sHom_{\O_X}(\O_X((1-p^e)(K_X + \Delta)), \O_X((1-p^e)K_X))\\
 & \cong F^e_*\sHom_{\O_X}(\sL, \O_X((1-p^e)K_X)) \cong \sHom_{\O_X}(F^e_*\sL, \O_X)
\end{align*}
The map $\phi$ associated with $\Delta$ is now the global section of $\sHom_{\O_X}(F^e_* \sL, \O_X)$ corresponding to the global section $1$ of $\O_X$ via this inclusion.

Conversely, an element $\phi \in \sHom_{\O_X}(F^e_* \sL, \O_X) \cong F^e_* \sL^{-1}( (1-p^e) K_X)$ determines an $F^e_*\O_X$-linear map
\[
    F^e_*\O_X\to[1\mapsto \phi]\sHom_{\O_X}(F^e_* \sL, \O_X) \to[\sim] F^e_* \sL^{-1}( (1-p^e) K_X)
\]
which corresponds to an effective Weil divisor $D$ such that $\O_X(D) \cong \sL^{-1}( (1-p^e) K_X)$.  Set $\Delta = {1 \over p^e - 1}D$.

In the special case that $X$ is the Spectrum of a local ring $(R, \bm)$, then the   correspondence (\ref{EqnGlobalBijection}) can be interpreted as follows:
\[
\left\{ \begin{matrix}\text{Effective $\bQ$-divisors $\Delta$ such }\\\text{that $(p^e - 1)(K_X + \Delta)$ is Cartier}\end{matrix} \right\} \leftrightarrow \left\{ \text{Non-zero elements of $\Hom_{R}(F^e_* R, R)$} \right\} \Big/ \sim
\]
where two maps $\phi_1, \phi_2 \in \Hom_{R}(F^e_* R, R)$ are identified if there exists some unit $u \in R$ such that $\phi_1(x) = \phi_2(ux)$ (in other words, if $\phi_1$ and $\phi_2$ agree after pre-multiplication by a unit).
Finally, if $X$ is the $\Spec$ of a complete local ring $(R, \bm)$, then this correspondence can also be extended to include the following:
\[
 \left\{ \begin{matrix} \text{Nonzero $R\{F^e\}$-module} \\ \text{structures on $E_R$} \end{matrix} \right\} \Big/ \sim
\]
Here the equivalence relation identifies two maps if they are equal after post-multiplication by a unit of $R$.

\begin{remark}
Suppose that $X = \Spec R$ and that $\Delta$ is an effective $\bQ$-divisor such that $(p^e - 1)\Delta$ is integral.  Further suppose that $\Hom_{\O_X}(F^e_* \O_X((p^e - 1)\Delta), \O_X)$ is a free $F^e_* \O_X$-module.  Then the $\phi_e$ corresponding to $\Delta$ can be viewed as a map $\phi_e : F^e_* R \rightarrow R$.  To see this, simply note that we have isomorphisms as $F^e_* \O_X$-modules,
\[
F^e_* \O_X \cong \Hom_{\O_X}(F^e_* \O_X((p^e - 1)\Delta), \O_X) \cong F^e_* \O_X( (1-p^e)(K_X + \Delta)).
\]
Therefore, the $\sL$ corresponding to $\Delta$ via (\ref{EqnGlobalBijection}) is isomorphic to $\O_X$.  Alternately, since we have a natural inclusion $\xymatrix{\Hom_{\O_X}(F^e_* \O_X((p^e - 1)\Delta), \O_X) \ar@{^{(}->}[r] & \Hom_{\O_X}(F^e_* \O_X, \O_X),}$ $\phi_e$ can be interpreted as the image of a generator of $\Hom_{\O_X}(F^e_* \O_X((p^e - 1)\Delta), \O_X)$ inside $\Hom_{\O_X}(F^e_* \O_X, \O_X)$.
\end{remark}

We now briefly review how singularities in characteristic $p > 0$ can be defined using such pairs.  For the purposes of this paper, one can take the following as definitions.

\begin{proposition} \cite[Theorem 3.11, Proposition 4.1, Proposition 4.8]{SchwedeFAdjunction}
\label{PropFSingularitiesInTermsOfMaps}
 Suppose that $X$ is a normal integral scheme and that $(X, \Delta)$ is a pair such that $K_X + \Delta$ is $\bQ$-Cartier with index not divisible by $p > 0$.  Let $\phi : F^e_* \sL \rightarrow \O_X$ be an associated map.  Then:
\begin{itemize}
 \item[(a)]  $(X, \Delta)$ is \emph{sharply $F$-pure} if and only if $\phi$ is surjective (as a map of sheaves).
 \item[(b)]  A reduced and irreducible subscheme $W \subseteq X$ is a \emph{center of $F$-purity for $(X, \Delta)$} if and only if $\phi(F^e_* I_W \sL) \subseteq I_W$ (here $I_W$ is the ideal sheaf of $W$).
 \item[(c)]  $(X, \Delta)$ is \emph{strongly $F$-regular} if and only if $(X, \Delta)$ has no centers of $F$-purity.
\end{itemize}
\end{proposition}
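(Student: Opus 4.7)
The three statements are all specializations of the correspondence $(\star)$, which assigns to an effective $\bQ$-divisor $\Delta$ with $(p^e-1)(K_X+\Delta)$ Cartier a nonzero $\phi \in \Hom_{\O_X}(F^e_*\sL,\O_X)$ defined as the section corresponding to $1 \in \O_X$ under the composite inclusion $\O_X \hookrightarrow F^e_*\O_X((p^e-1)\Delta) \cong \sHom_{\O_X}(F^e_*\sL,\O_X)$ coming from Grothendieck duality. The overall plan is to translate each of the three notions on the left-hand side---sharp $F$-purity, center of $F$-purity, and strong $F$-regularity---into the asserted condition on $\phi$ by passing across this identification.

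For part (a), $(X,\Delta)$ is sharply $F$-pure by definition if the inclusion $\O_X \hookrightarrow F^e_*\O_X((p^e-1)\Delta)$ locally splits as an $\O_X$-module map. Dualizing by $\Hom_{\O_X}(-,\O_X)$ and invoking Grothendieck duality for $F^e$ once more, such a splitting is the same as an $\O_X$-linear functional on $\sHom_{\O_X}(F^e_*\sL,\O_X)$ sending $\phi$ to $1$; by evaluation adjunction this is in turn the existence of a local section $s$ of $F^e_*\sL$ with $\phi(s)=1$. Since the image of $\phi$ is a sheaf of ideals, stalkwise existence of such $s$ is equivalent to $\phi$ being surjective as a map of sheaves.

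For part (b), a center of $F$-purity is a reduced and irreducible closed subscheme $W$ whose defining ideal $I_W$ is compatible with a splitting from part (a), meaning that the splitting carries $F^e_* I_W \cdot F^e_*\O_X((p^e-1)\Delta)$ into $I_W$. Translating this compatibility across $(\star)$ yields exactly $\phi(F^e_* I_W \sL) \subseteq I_W$. Two consistency checks are needed: (i) the condition is independent of the chosen representative $\phi$, since any two representatives differ by multiplication by a unit and this preserves the compatibility; and (ii) the condition is compatible with the iteration of Remark \ref{RemarkDefinitionOfComposedPInverseLinear}, so that the notion does not depend on replacing $e$ by a positive multiple.

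For part (c), strong $F$-regularity means that for every nonzero local section $c$ of $\O_X$ there exist $n\geq 1$ and a local section witnessing $\phi_{ne}(F^{ne}_*(c \cdot \sL^{1+p^e+\cdots+p^{(n-1)e}})) = \O_X$. I would prove equivalence with the absence of centers of $F$-purity by extracting, from any failure of strong $F$-regularity, a canonical proper $\phi$-compatible ideal sheaf $\sJ$: namely the sheaf of sections $c$ for which no iterate $\phi_{ne}$ hits the unit ideal. The globalized dual Hartshorne--Speiser--Lyubeznik stabilization of descending chains of images of $p^{-e}$-linear maps---referenced in the introduction and proved later in the paper---guarantees that $\sJ$ is a coherent $\phi$-compatible ideal sheaf, proper and nonzero precisely when $(X,\Delta)$ fails to be strongly $F$-regular; any minimal prime of $\sJ$ then cuts out a center of $F$-purity by part (b). The converse---if $W$ is a center of $F$-purity and $0 \neq c \in I_W$, then the compatibility $\phi(F^e_* I_W \sL) \subseteq I_W$ iterates to force $\phi_{ne}(F^{ne}_* c \cdot \sL^{(n)}) \subseteq I_W$ always to be proper---is immediate from part (b) and Remark \ref{RemarkDefinitionOfComposedPInverseLinear}. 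The main obstacle I expect is making the construction and properness of $\sJ$ rigorous in the non-local setting, which is precisely where the stabilization of images of $p^{-e}$-linear maps is essential.
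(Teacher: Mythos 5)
You should first note that the paper contains no proof of Proposition \ref{PropFSingularitiesInTermsOfMaps} to compare against: it is quoted from \cite{SchwedeFAdjunction} (Theorem 3.11, Propositions 4.1 and 4.8), and the surrounding text explicitly says these characterizations may be taken as definitions for the purposes of the paper. Judged on its own terms, your treatment of (a) and (b) follows the standard dualization of the correspondence (\ref{EqnGlobalBijection}) and is fine in outline; the substantive point you gloss over in (b) is that a center of $F$-purity is defined by compatibility with \emph{every} map in $\Hom_{\O_X}(F^{e'}_*\O_X(\lceil (p^{e'}-1)\Delta\rceil),\O_X)$ for every $e'>0$ (no splitting is assumed to exist, so your phrase ``compatible with a splitting from part (a)'' is not the right starting point), and the content of the cited result is that this is equivalent to compatibility with the single $\phi$ and its iterates $\phi_{ne}$; that step uses that, since the index is prime to $p$, $\sHom_{\O_X}(F^{ne}_*\sL^{1+p^e+\cdots+p^{(n-1)e}},\O_X)$ is cyclic as an $F^{ne}_*\O_X$-module generated by $\phi_{ne}$.

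The genuine gap is in (c). If $\phi$ is not surjective, then for \emph{every} section $c$ and every $n$ one has $\phi_{ne}(F^{ne}_*c\,\sL^{1+p^e+\cdots+p^{(n-1)e}})\subseteq\phi(F^e_*\sL)\subsetneq\O_X$, so your ideal $\sJ$ is all of $\O_X$. Thus $\sJ$ is \emph{not} ``proper and nonzero precisely when $(X,\Delta)$ fails to be strongly $F$-regular''; it is proper exactly when the pair is sharply $F$-pure, and in the non-$F$-pure case (where strong $F$-regularity certainly fails) its minimal primes produce no center. Moreover, the defining condition ``no iterated image is the unit ideal'' is not local on $X$ (a proper image ideal can become the unit ideal after restricting to an open set or localizing), so quasi-coherence of $\sJ$ is unclear, and Theorem \ref{TheoremDualHSLForFFinite} addresses neither difficulty. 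The standard argument runs through the \emph{smallest} nonzero compatible ideal rather than a largest one: by Proposition \ref{propCharTestSmallestFixed}, $\tau_b(X;\Delta)$ is the smallest nonzero ideal $J$ with $\phi_{ne}(F^{ne}_*J\sL^{1+p^e+\cdots+p^{(n-1)e}})\subseteq J$, and strong $F$-regularity is equivalent to $\tau_b(X;\Delta)=\O_X$. If a center of $F$-purity $W\subsetneq X$ exists, minimality forces $\tau_b(X;\Delta)\subseteq I_W\subsetneq\O_X$; conversely, if $\tau_b(X;\Delta)\neq\O_X$, its minimal primes are again compatible (compatibility passes to radicals and minimal primes, cf.\ \cite{SchwedeCentersOfFPurity}), and any one of them is a center of $F$-purity. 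Alternatively, in the non-$F$-pure case one can simply use the image $\phi(F^e_*\sL)$, which is a nonzero, proper, $\phi$-compatible ideal, and pass to its minimal primes; your construction, as written, cannot handle that case.
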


The true value of the observations made at the beginning of the section is that it allows the transfer of a problem on a ring $R$ to a quotient ring $R/Q$ (or visa versa) where $Q$ corresponds to some center of $F$-purity for $(X, \Delta)$.  The following result summarizes some ways in which this can be done.

\begin{theorem} \cite[Main Theorem]{SchwedeFAdjunction}
\label{TheoremFAdjunction}
Suppose that $X$ is an integral normal $F$-finite noetherian scheme of essentially finite type over a field of characteristic $p > 0$.  Further suppose that $\Delta$ is an effective $\bQ$-divisor on $X$ such that $K_X + \Delta$ is $\bQ$-Cartier with index not divisible by $p$.  Let $W \subseteq X$ be an closed subscheme that satisfies the following properties:
\begin{itemize}
\item[(a)]  $W$ is integral and normal.
\item[(b)]  $(X, \Delta)$ is sharply $F$-pure at the generic point of $W$.
\item[(c)]  $W$ is a center of $F$-purity for $(X, \Delta)$.
\end{itemize}
Then there exists a canonically determined effective divisor $\Delta_W$ on $W$ satisfying the following properties:
\begin{itemize}
\item[(i)]  $(K_W + \Delta_W) \sim_{\bQ} (K_X + \Delta)|_W$
\item[(ii)]  The singularities of $(W, \Delta_W)$ are ``the same'' as the singularities of $(X, \Delta)$ near $W$.  (Please see \cite{SchwedeFAdjunction} for details of what ``the same'' means).
\end{itemize}
\end{theorem}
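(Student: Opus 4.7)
The plan is to use the correspondence ($\star$) as a bridge: translate $\Delta$ into a Cartier-linear map on $X$, restrict this map to $W$, and then invert ($\star$) on $W$ to extract $\Delta_W$. Under ($\star$), the divisor $\Delta$ corresponds to a pair $(\sL, \phi)$ with $\sL \cong \O_X((1-p^e)(K_X + \Delta))$ and a nonzero map $\phi : F^e_*\sL \to \O_X$. Hypothesis (c), together with Proposition \ref{PropFSingularitiesInTermsOfMaps}(b), gives $\phi(F^e_* I_W \sL) \subseteq I_W$, so $\phi$ descends to a map
\[
    \overline{\phi} : F^e_*(\sL|_W) \to \O_W,
\]
using $\sL/I_W \sL \cong \sL|_W$ since $\sL$ is a line bundle. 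Hypothesis (b) combined with Proposition \ref{PropFSingularitiesInTermsOfMaps}(a) implies that $\phi$ is surjective at the generic point of $W$, so $\overline{\phi}$ is nonzero. Since $W$ is integral, normal, and $F$-finite by (a), the inverse direction of ($\star$) applied on $W$ to $\overline{\phi}$ yields a canonical effective $\bQ$-divisor $\Delta_W$ with $(p^e-1)(K_W + \Delta_W)$ Cartier and $\sL|_W \cong \O_W((1 - p^e)(K_W + \Delta_W))$.

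Part (i) is then obtained by comparing the two descriptions of $\sL|_W$: restricting $\sL \cong \O_X((1-p^e)(K_X + \Delta))$ to $W$ gives $\sL|_W \cong \O_W((1-p^e)(K_X+\Delta)|_W)$, which together with the description above forces $(K_X + \Delta)|_W \sim_{\bQ} K_W + \Delta_W$. For part (ii), the strategy is to use ($\star$) as a dictionary between $F$-singularity properties of a pair and local properties of the associated Cartier-linear map: $(X, \Delta)$ is sharply $F$-pure near $W$ iff $\phi$ is surjective near $W$ iff $\overline{\phi}$ is surjective iff $(W, \Delta_W)$ is sharply $F$-pure, by Proposition \ref{PropFSingularitiesInTermsOfMaps}(a); centers of $F$-purity for $(W, \Delta_W)$ should correspond precisely to centers of $F$-purity for $(X, \Delta)$ contained in $W$, by part (b); and strong $F$-regularity transfers by part (c). Handling all $F$-singularity notions also requires iterating $\phi$ and $\overline{\phi}$, using Remark \ref{RemarkDefinitionOfComposedPInverseLinear}, and checking that composition commutes with restriction to $W$.

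The key technical point, and the expected main obstacle, lies in making part (i) precise: the isomorphism $\sL|_W \cong \O_W((1-p^e)(K_W + \Delta_W))$ arising from ($\star$) on $W$ goes through Grothendieck duality on $W$, while the description $\sL \cong \O_X((1-p^e)(K_X + \Delta))$ uses Grothendieck duality on $X$, and reconciling these requires an adjunction-type identification of $\omega_W$ in terms of $\omega_X$ that is compatible with the descent of $\phi$ to $\overline{\phi}$. In the divisorial case this reduces to the usual adjunction formula, but when $W$ has higher codimension in $X$ one needs the full Grothendieck duality isomorphism for the closed immersion $W \hookrightarrow X$, applied compatibly with the Frobenius and with the map $\phi$. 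Once this compatibility is in place, the remainder of the argument is a formal consequence of ($\star$) on both $X$ and $W$.
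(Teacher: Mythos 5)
Your proposal follows essentially the same route as the paper's own sketch: use the correspondence (\ref{EqnGlobalBijection}) to pass from $\Delta$ to a map $\phi : F^e_* \sL \to \O_X$, use hypothesis (c) to descend it to $\overline{\phi} : F^e_*(\sL|_W) \to \O_W$ (nonzero by (b)), and then invert (\ref{EqnGlobalBijection}) on $W$ to obtain $\Delta_W$, with (i) following from comparing the two descriptions of $\sL|_W$. The paper likewise defers the genuinely delicate points you identify---the duality compatibilities behind (i), the independence of $\Delta_W$ from the choice of $e$ and $\phi$, and all of (ii)---to the cited reference \cite{SchwedeFAdjunction}, so your account matches the level of detail given here.
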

\begin{proof}
 We will sketch the proof of (i).  Let $\phi : F^e_* \sL \rightarrow \O_X$ be a map corresponding to $\Delta$ via (\ref{EqnGlobalBijection}).  By hypothesis, $\phi(F^e_* I_W \sL) \subseteq I_W$.  This implies that we have a diagram,
\[
 \xymatrix{
F^e_* I_W \sL \ar@{^{(}->}[d] \ar[r]^{\phi} & I_W \ar@{^{(}->}[d] \\
F^e_* \sL \ar@{->>}[d] \ar[r]^{\phi} & \O_X \ar@{->>}[d] \\
F^e_* \sL|_W \ar[r]^{\overline{\phi}} & \O_W
}
\]
Applying (\ref{EqnGlobalBijection}) on $W$, we obtain an effective $\bQ$-divisor $\Delta_W$ on $W$ such that $\O_W((1-p^e)(K_W + \Delta_W)) \cong \sL_W$.  To see that $\Delta_W$ is independent of the choice of $\phi$ (and in particular, independent of the choice of $e$) see \cite[Theorem 3.10, Theorem 5.2]{SchwedeFAdjunction}.
\end{proof}

When one combines this result with Fedder's work, see \cite{FedderFPureRat}, one obtains the following result.

\begin{theorem} \cite[Theorem A, Theorem 5.5]{SchwedeFAdjunction}
\label{TheoremFedderFadjunction}
Suppose that $S$ is a regular $F$-finite ring such that $F^e_* S$ is free and that $R = S/I$ is a quotient that is normal.  Further suppose that $\Delta_R$ is an effective $\bQ$-divisor on $X = \Spec R$ such that $(p^e - 1)\Delta$ is integral and $\O_X((p^e - 1)(K_X + \Delta_R))$ is free.  Then there exists an effective $\bQ$-divisor $\Delta_S$ on $S$ such that:
\begin{itemize}
 \item[(i)]  $X$ is a center of $F$-purity for $(\Spec S, \Delta_S)$.
 \item[(ii)]  $\Delta_S$ and $\Delta_R$ are related as in Theorem \ref{TheoremFAdjunction}.
\end{itemize}
\end{theorem}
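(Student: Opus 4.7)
The plan is to reduce the statement to a computation with $p^{-e}$-linear maps, in the spirit of the correspondence $(\star)$ that underlies the proof sketch of Theorem \ref{TheoremFAdjunction}. The key tool is the classical Fedder-style identification of $p^{-e}$-linear maps on a quotient of a regular ring with those on the regular ring carrying the defining ideal into itself.

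First, since $\O_X((p^e-1)(K_X+\Delta_R))$ is free by hypothesis, the line bundle occurring in $(\star)$ on $X=\Spec R$ is trivial, so the correspondence yields an honest $R$-linear map $\phi_R\colon F^e_* R\to R$ whose associated divisor is $\Delta_R$. Next I lift $\phi_R$ to a $p^{-e}$-linear map on $S$. Because $S$ is regular and $F^e_* S$ is free, $\Hom_S(F^e_* S, S)$ is a free $F^e_* S$-module of rank one; fix a generator $\Phi_S\colon F^e_* S\to S$. Applying $\Hom_S(F^e_*(-),S)$ to the short exact sequence $0\to I\to S\to R\to 0$ (Fedder's classical computation) yields an isomorphism of $F^e_* S$-modules
\[
\Hom_R(F^e_* R, R)\;\cong\;F^e_*\bigl((I^{[p^e]}:_S I)/I^{[p^e]}\bigr),
\]
sending $\bar z\mapsto\bigl(\bar s\mapsto\overline{\Phi_S(zs)}\bigr)$. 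Let $\bar z$ correspond to $\phi_R$ under this isomorphism, choose any lift $z\in (I^{[p^e]}:_S I)$, and set $\phi_S\defeq \Phi_S\circ (z\cdot-)\colon F^e_* S\to S$. By the defining property of $(I^{[p^e]}:_S I)$ we have $\phi_S(F^e_* I)\subseteq \Phi_S(F^e_* I^{[p^e]})\subseteq I$, and by construction $\phi_S$ descends modulo $I$ to $\phi_R$.

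Applying $(\star)$ on $\Spec S$ to the resulting map $\phi_S$ produces an effective $\bQ$-divisor $\Delta_S$ on $\Spec S$ such that $(p^e-1)(K_S+\Delta_S)$ is Cartier. The containment $\phi_S(F^e_* I)\subseteq I$ is precisely the criterion of Proposition \ref{PropFSingularitiesInTermsOfMaps}(b) for $X=V(I)$ to be a center of $F$-purity for $(\Spec S,\Delta_S)$, establishing (i). For (ii), recall that the divisor attached by Theorem \ref{TheoremFAdjunction} to the center $X$ is read off from the induced map $\overline{\phi_S}\colon F^e_* R\to R$ via the correspondence $(\star)$ on $R$; since $\overline{\phi_S}=\phi_R$ by construction, this adjunction divisor is exactly $\Delta_R$, so $\Delta_S$ and $\Delta_R$ are related as in Theorem \ref{TheoremFAdjunction}.

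The only real input beyond the correspondence $(\star)$ and Proposition \ref{PropFSingularitiesInTermsOfMaps} is the Fedder-style identification of $\Hom_R(F^e_* R, R)$ used above, which hinges on $S$ being regular with $F^e_* S$ free so that Grothendieck duality for $F^e$ is transparent; this is the main (mild) obstacle and is well documented in the literature. A harmless ambiguity in the construction is that different lifts $z$ of $\bar z$ yield different divisors $\Delta_S$ on $\Spec S$, but all of them restrict along $X$ to the same $\Delta_R$, which is all that the statement requires.
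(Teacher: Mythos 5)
Your proof is essentially the paper's (cited) argument: the paper does not reprove this statement but attributes it to \cite{SchwedeFAdjunction} and to combining Theorem \ref{TheoremFAdjunction} with Fedder's work \cite{FedderFPureRat}, which is precisely your construction of lifting $\phi_R$ through the isomorphism $\Hom_R(F^e_*R,R)\cong F^e_*\bigl((I^{[p^e]}:_S I)/I^{[p^e]}\bigr)$ to a map $\phi_S$ with $\phi_S(F^e_*I)\subseteq I$ and then applying the correspondence (\ref{EqnGlobalBijection}) on $\Spec S$ and restricting back to $X$. The one assertion to treat with care is that $\Hom_S(F^e_*S,S)$ is $F^e_*S$-free of rank one: this amounts to $(p^e-1)K_S\sim 0$ and is not a purely formal consequence of $F^e_*S$ being a free $S$-module, but it is what the quoted freeness hypotheses are meant to supply, and it holds in the paper's actual application, where $S$ is a polynomial ring (Remark \ref{remarkCartierMap}).
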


This theorem will allow us to translate the techniques \cite{BlickleMustataSmithDiscretenessAndRationalityOfFThresholds} from the polynomial ring case to the case of a quotient of a polynomial ring. Roughly speaking, if a ring $R=S/I$ is a quotient of a polynomial ring $S$, then questions about $(R, \Delta_R)$ can be answered by studying the pair $(S, \Delta_S)$.

\section{Generalized test ideals}
\label{SectionGenTest}
This section contains the technical bulk of this article, as we collect and derive here the definitions and results about generalized test ideals we need to prove our main result on rationality and discreteness of $F$-jumping numbers. We begin in Section \ref{SectionReviewOfGeneralized} by recalling the various notions and definitions of generalized test ideals that have appeared in the literature before, and show that these are equivalent in the context we are considering. This is mostly expository and serves the purpose to collect results which are scattered through the literature. Section \ref{SectionRestrictionOfTestIdeals} follows an extension of this notion of generalized test ideal which is crucial in our treatment, namely that of a \emph{generalized test ideal along an $F$-pure center}. This construction -- building on ideas going back to \cite{HochsterHunekeTC1}, \cite{TakagiHigherDimensionalAdjoint}, and \cite{TakagiPLTAdjoint} -- slightly reformulates work of the second author in \cite{SchwedeFAdjunction} and allows us to make the transition from the case of affine space to the case of an arbitrary normal affine variety. We finish this section with some general results on these test ideals which will be needed in the proof of both cases of our main theorem.

\subsection{Review of generalized test ideals}
\label{SectionReviewOfGeneralized}
We first set up some notation for the rest of this section. $R$ will denote a normal domain of characteristic $p > 0$, and $\Delta$ will denote an effective $\R$-divisor on $X:=\Spec R$. By $R^\circ$ we denote the set of non zero-divisors of $R$. For any integral Weil divisor $D$ on $X$, we will use $R(D)$ to denote the global sections of $\O_X(D)$.

We let $M$ be an $R$-module. For each integer $e \in \N$, we denote $\mathbb{F}^{e,\Delta}(M)=\mathbb{F}^{e,\Delta}_R(M):= F^e_* R(\lceil (p^e-1)\Delta \rceil) \otimes_{R} M$ and regard it as an $R$-module by the action of $F^e_* R \cong R$ from the left. Then the $e\th$ iteration of the Frobenius map induces a map $F^e: M \to \mathbb{F}^{e,\Delta}(M)$. The image of $z \in M$ via this map is denoted by $z^{p^e} := 1 \tensor z = F^e(z) \in \mathbb{F}^{e,\Delta}(M)$. For an $R$-submodule $N$ of $M$, we denote by $N^{[p^e],\Delta}_M$ the image of the map $\mathbb{F}^{e,\Delta}(N) \to \mathbb{F}^{e,\Delta}(M)$.
In the special case that $I \subseteq R$ is an ideal, then $I^{[p^e],\Delta}_R \cong I^{[p^e]} R(\lceil (p^e-1)\Delta \rceil)$.

We denote by $E =E_R =\bigoplus_{\frm} E_R(R/\frm)$ the direct sum, taken over all maximal ideals $\frm$ of $R$, of the injective hulls of the residue fields $R/\frm$.

\begin{definition}
Suppose that $(X=\Spec R, \Delta, \ba)$ is a triple where $R$ is a normal domain of characteristic $p>0$, and let $t>0$ be a real number.
\begin{enumerate}
\item (cf.~\cite[Definition 6.1]{HaraYoshidaGeneralizationOfTightClosure}, \cite[Definition 2.1]{TakagiInterpretationOfMultiplierIdeals}, \cite{HochsterHunekeTC1}) For $R$-modules $N \subseteq M$ the \textit{$(\Delta, \ba^t)$-tight closure} $N^{*(\Delta, \ba^t)}_M$ of $N$ in $M$ is defined to be the submodule of $M$ consisting of all elements $z \in M$ for which there exists $c \in R^{\circ}$ such that
    \[
        c\ba^{\lceil t(q-1) \rceil}z^q \subseteq N^{[q], \Delta}_M
    \]
    for all large $q = p^e$. The $(\Delta, \ba^t)$-tight closure of an ideal $I \subseteq R$ is defined as $I^{*(\Delta, \ba^t)} = I^{*(\Delta, \ba^t)}_R$.
\item (\cite[Definition 2.5]{SchwedeSharpTestElements})
    We say that a nonzero element $c \in R$ is a \textit{sharp test element} for $(X, \Delta, \ba^t)$ if, for all ideals $I \subseteq R$ and all $z \in I^{*(\Delta, \ba^t)}$, we have
    \[
        c\ba^{\lceil t(q-1) \rceil}z^q \subseteq I^{[q]}R(\lceil (p^e-1) \Delta \rceil)
    \]
    for every $q=p^e$.
\item (\cite[Definition 2.19]{SchwedeCentersOfFPurity}, \cite{HochsterFoundations})
    We say that a nonzero element $c \in R$ is a \textit{big sharp test element} for $(X,\Delta, \ba^t)$ if for all inclusions of $R$-modules $N \subseteq M$ and all $z \in N_{M}^{*\Delta}$, we have
    \[
        c\ba^{\lceil t(q-1) \rceil}z^q \subseteq N^{[q],\Delta}_M
    \]
    for every $q=p^e$.  If $\ba = R$ and $\Delta = 0$, then a big sharp test element will be called simply a \emph{big test element}.
\end{enumerate}
\end{definition}

We now define the generalized test ideals.  The original context in which this was defined was when $\Delta = 0$ and $\ba = R$, see \cite{HochsterHunekeTC1}.  It is called the ``test ideal'' because it is generated by the elements which can be used to ``test'' the inclusion in the tight closure.

\begin{definition-proposition}[cf.~{\cite[Definition-Theorem 6.5]{HaraYoshidaGeneralizationOfTightClosure}, \cite[Theorem 2.8 (1)]{TakagiInterpretationOfMultiplierIdeals}, \cite[Theorem 2.7]{SchwedeSharpTestElements}}] \label{PropDefClassicTestElements}
Suppose that $(X=\Spec R, \Delta, \ba)$ is a triple where $R$ is an excellent normal domain of characteristic $p>0$. Then for all real numbers $t>0$ each of the following conditions defines the same ideal, which is called the \emph{generalized test ideal} for the triple $(X; \Delta, \ba^t)$ and denoted by $\tau(X; \Delta, \ba^t)$.
\begin{enumerate}
\item $\displaystyle\bigcap_{M\subseteq E}\!\!\Ann_R(0^{*(\Delta, \ba^t)}_M)$, where
$M$~runs through all \emph{finitely generated submodules} of~$E_R$.
\item $\displaystyle\bigcap_M \Ann_R(0^{*(\Delta,\ba^t)}_M)$, where $M$ runs
through all \emph{finitely generated} $R$-modules.
\vspace{3pt}
\item $\displaystyle\bigcap_{J\subseteq R} (J:J^{*(\Delta, \ba^t)})$, where $J$
runs through all ideals of $R$.\\
\newcounter{temp}
\setcounter{temp}{\value{enumi}}
\end{enumerate}
If $R$ is $F$-finite and $\Delta=0$, then the above ideals also coincide with the following ideal.
\begin{enumerate}
\setcounter{enumi}{\value{temp}}
\item The ideal generated by all sharp test elements for $(X, \ba^t)$.
\end{enumerate}
\end{definition-proposition}

We hope that condition (4) of Proposition-Definition \ref{PropDefClassicTestElements} can be generalized to the case when $\Delta \neq 0$, but we will not need it and so we will not attempt to work it out here. A better-behaved variant of this definition is obtained by dropping the \emph{finitely generated} assumption. These (non-finitistic test ideals) are the test ideals we are considering in this article (although, as we will see, the two notions agree in cases we consider, see Proposition \ref{PropTwoIdealsAgree}).
\begin{definition-proposition}
\label{DefnPropBigTestIdeals}
Suppose that $(X=\Spec R, \Delta, \ba)$ is a triple where $R$ is an $F$-finite normal domain of characteristic $p>0$. Then for all real numbers $t>0$ each of the following conditions defines the same ideal, which is called the \emph{big} generalized test ideal for the triple $(X; \Delta, \ba^t)$ and denoted by $\tau_b(X; \Delta, \ba^t)$.
\begin{enumerate}
\item $\Ann_R(0^{*(\Delta, \ba^t)}_E)$.
\item $\displaystyle\bigcap_M \Ann_R(0^{*(\Delta,\ba^t)}_M)$, where $M$ runs through \emph{all} $R$-modules.
\item The ideal generated by all big sharp test elements for $(X, \Delta, \ba^t)$.
\item The sum
\[
\sum_{e \ge 0} \sum_{\phi} \phi\left(F^e_*(d\ba^{\lceil t(p^e-1) \rceil}) \right),
\] where $\phi$ ranges over $\phi \in \Hom_{R}(F^e_* R(\lceil (p^e-1)\Delta \rceil), R) \subseteq \Hom_{R}(F^e_*R, R)$ and where $d$ is a big sharp test element for $(X, \Delta, \ba^t)$.
\end{enumerate}
If $\Delta = 0$, then the above four ideals coincide with the following ideal.
\begin{itemize}
\item[(5)] The sum
\[
\sum_{e \ge 0} \sum_{\phi} \phi\left(F^e_*( \ba^{\lceil t p^e \rceil} d ) \right),
\]
where $\phi$ ranges over $\phi \in \Hom_{R}(F^e_* R, R)$ and where $d$ is a big test element for $R$
\end{itemize}
\end{definition-proposition}

In the literature, for example in \cite{LyubeznikSmithCommutationOfTestIdealWithLocalization} or \cite{TakagiPLTAdjoint}, the ideal $\tau_b(X; \Delta, \ba^t)$ is often denoted by $\tld \tau(X; \Delta, \ba^t)$.  The key reason why the big test ideal is better behaved than the usual test ideal, is that its formation is easily seen to commute with localization (via condition (4), since one can find big sharp test elements who remain big sharp test elements after localization).

\begin{remark}
Condition (5) can also be generalized to include the case where $\Delta \neq 0$, and we do something quite like this in Lemma \ref{LemmaAlternateUltraGeneralizedTestIdeal} below.  We'll only prove the more limited version here though.
\end{remark}

\begin{proof}
First we prove the equivalence of (1) through (4).  Let us denote
\[
J:= \sum_{e \ge 0} \sum_{\phi} \phi\left(F^e_*(d\ba^{\lceil t(p^e-1) \rceil}) \right)
\]
where $\phi$ ranges over $\Hom_{R}(F^e_* R(\lceil (p^e-1)\Delta \rceil), R) \subseteq \Hom_{R}(F^e_* R, R)$ and $d$ is a fixed big sharp test element for $(X, \Delta, \ba^t)$.
By \cite[Theorem 2.22]{SchwedeCentersOfFPurity}, $J$ coincides with the ideal generated by all big sharp test elements for $(X, \Delta, \ba^t)$.
Also, by definition, every big sharp test element is in $\Ann_R(0^{*(\Delta,\ba^t)}_M)$ for all $R$-modules $M$, and $\bigcap_M \Ann_R(0^{*(\Delta,\ba^t)}_M)$ is clearly contained in $\Ann_R(0^{*(\Delta, \ba^t)}_E)$.
Thus, for the equivalence of (1) through (4), it suffices to prove that $\Ann_R(0^{*(\Delta, \ba^t)}_E)$ is contained in $J$.

First note that
\[
\Ann_R(0^{*(\Delta, \ba^t)}_{E}) = \Ann_R(\bigoplus_{\bm} 0^{*(\Delta, \ba^t)}_{E_{R_{\bm}}})
= \bigcap_{\bm} \Ann_R(0^{*(\Delta, \ba^t)}_{E_{R_{\bm}}}) = \bigcap_{\bm} \Ann_{R_{\bm}}(0^{*(\Delta, \ba^t)}_{E_{R_{\bm}}})
\]
where $\bm$ runs through all maximal ideals of $R$ and $E_{R_\bm}$ is the injective hull $E_{R}(R/\bm) =E_{R_\bm}(R_{\bm}/\bm R_{\bm})$ of the residue field $R/\bm$.
From this observation, we know that the localization of $\Ann_R(0^{*(\Delta, \ba^t)}_{E})$ at a maximal ideal ${\bm}$ of $R$ is contained in $\Ann_{R_{\bm}}(0^{* (\Delta, \ba^t)}_{E_{R_{\bm}}})$.
On the other hand, it is easy to see that the formation of the ideal $J$ commutes with localization (as long as $d$ is chosen to be a big sharp test element that remains a big sharp test element after localization, see Remark \ref{RemBigTestEltsExist}). It follows from \cite[Lemma 2.1]{HaraTakagiOnAGeneralizationOfTestIdeals}
that $\Ann_{R_{\bm}}(0^{* (\Delta, \ba^t)}_{E_{R_{\bm}}})=J_{\bm}$ for every maximal ideal $\bm$ of $R$. Therefore, we obtain the desired inclusion.

Condition (5), in the case that $R$ is local, is simply \cite[Lemma 2.1]{HaraTakagiOnAGeneralizationOfTestIdeals}.  But then we have an ideal of $R$ which agrees with $\tau_b(R, \ba^t)$ after localization at every maximal ideal of $R$.
\end{proof}

\begin{remark}
Conditions (4) and (5) in Definition-Proposition \ref{DefnPropBigTestIdeals} are really slight reformulations of \cite[Lemma 2.1]{HaraTakagiOnAGeneralizationOfTestIdeals}.
\end{remark}

\begin{remark}\label{RemBigTestEltsExist}
Suppose that $d$ is a big sharp test element that remains a big sharp test element after localization and completion (the usual proof of the existence of test elements produces such elements; in particular such elements exist).  It then follows quickly from Definition-Proposition \ref{DefnPropBigTestIdeals}~(4) above that the big test ideal commutes with localization and completion.  See \cite{HaraTakagiOnAGeneralizationOfTestIdeals} and \cite{SchwedeCentersOfFPurity} for additional details.
\end{remark}



It is known that the test ideal coincides with the big test ideal in many cases (see for example \cite{LyubeznikSmithCommutationOfTestIdealWithLocalization}, \cite{LyubeznikSmithStrongWeakFregularityEquivalentforGraded}, \cite{AberbachMacCrimmonSomeResultsOnTestElements}).  We will need the following globalized version of \cite[Theorem 2.8 (2)]{TakagiInterpretationOfMultiplierIdeals} (also compare with \cite[Definition-Theorem 6.5]{HaraYoshidaGeneralizationOfTightClosure}).

\begin{proposition}
\label{PropTwoIdealsAgree}
Suppose that $(X=\Spec R, \Delta, \ba)$ is a triple where $R$ is an $F$-finite normal domain of characteristic $p>0$.
If $K_X + \Delta$ is $\Q$-Cartier, then the ideals $\tau(X;\Delta, \ba^t)$ and $\tau_b(X;\Delta, \ba^t)$ coincide.
\end{proposition}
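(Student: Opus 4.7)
The strategy is to establish the equality after localizing at each maximal ideal and to invoke the local version of this result due to Takagi \cite[Theorem 2.8 (2)]{TakagiInterpretationOfMultiplierIdeals}. The inclusion $\tau_b(X;\Delta,\ba^t) \subseteq \tau(X;\Delta,\ba^t)$ is immediate from the annihilator characterizations in Definition-Proposition \ref{PropDefClassicTestElements}(1) and Definition-Proposition \ref{DefnPropBigTestIdeals}(1): for every finitely generated submodule $M \subseteq E$ one has $0^{*(\Delta,\ba^t)}_M \subseteq 0^{*(\Delta,\ba^t)}_E$, so $\Ann_R(0^{*(\Delta,\ba^t)}_E) \subseteq \Ann_R(0^{*(\Delta,\ba^t)}_M)$, and intersecting over $M$ gives the claim.

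For the reverse containment $\tau \subseteq \tau_b$, I would check it after localization at each maximal ideal $\bm \subseteq R$. The formation of $\tau_b$ already commutes with localization (Remark \ref{RemBigTestEltsExist}), so $(\tau_b(X;\Delta,\ba^t))_\bm = \tau_b(\Spec R_\bm; \Delta|_\bm, \ba_\bm^t)$. On the $\tau$ side, it suffices to establish the one-sided containment $(\tau(X;\Delta,\ba^t))_\bm \subseteq \tau(\Spec R_\bm; \Delta|_\bm, \ba_\bm^t)$. This is a direct computation: any finitely generated $R_\bm$-submodule $M' \subseteq E_{R_\bm}$ is $\bm$-power torsion, hence $\mathbb{F}^{e,\Delta}_R(M')$ is $\bm$-power torsion, and the canonical map $\mathbb{F}^{e,\Delta}_R(M') \to \mathbb{F}^{e,\Delta|_\bm}_{R_\bm}(M')$ is an isomorphism. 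Given a witness $c' = a/s \in R_\bm^{\circ}$ for membership $z \in 0^{*(\Delta|_\bm,\ba_\bm^t)}_{M'}$ over $R_\bm$, multiplying through by $s^{p^e}$ (which is invertible on $M'$) yields a witness $a \in R^{\circ}$ for $z \in 0^{*(\Delta,\ba^t)}_{M'}$ over $R$. Thus any $c \in \tau(X;\Delta,\ba^t)$ annihilates such $z$, and so $c/s$ annihilates $0^{*(\Delta|_\bm,\ba_\bm^t)}_{M'}$ for every finitely generated $M' \subseteq E_{R_\bm}$.

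With this reduction in hand, Takagi's local theorem applies in the $\bQ$-Gorenstein setting to give $\tau(\Spec R_\bm; \Delta|_\bm, \ba_\bm^t) = \tau_b(\Spec R_\bm; \Delta|_\bm, \ba_\bm^t)$, and we obtain the chain
\[
  (\tau(X;\Delta,\ba^t))_\bm \,\subseteq\, \tau(\Spec R_\bm; \Delta|_\bm, \ba_\bm^t) \,=\, \tau_b(\Spec R_\bm; \Delta|_\bm, \ba_\bm^t) \,=\, (\tau_b(X;\Delta,\ba^t))_\bm
\]
at every maximal ideal $\bm$. Since $R$ is noetherian, checking containment of ideals at all maximal ideals suffices, so $\tau(X;\Delta,\ba^t) \subseteq \tau_b(X;\Delta,\ba^t)$ globally, completing the proof.

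The genuine substance of the argument -- exhibiting a single finitely generated submodule of $E$ that detects the full annihilator $\Ann_R(0^{*(\Delta,\ba^t)}_E)$ -- is encapsulated in Takagi's local theorem, and this is where the $\bQ$-Cartier hypothesis on $K_X+\Delta$ is essential (it is what allows one to work with an explicit Matlis dual of the canonical module). The globalization is then essentially formal, modulo verifying the compatibility with localization of the (finitistic) test ideal that was carried out above.
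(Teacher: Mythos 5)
Your proposal is correct and follows essentially the same route as the paper: both reduce the containment $\tau(X;\Delta,\ba^t)\subseteq\tau_b(X;\Delta,\ba^t)$ to the local theorem of \cite[Definition-Theorem 6.5]{HaraYoshidaGeneralizationOfTightClosure} and \cite[Theorem 2.8 (2)]{TakagiInterpretationOfMultiplierIdeals} at each maximal ideal $\bm$, using that finitely generated submodules of $E_{R_\bm}$ are $\bm$-power torsion (hence finitely generated over $R$), so that tight closure over $R_\bm$ is detected by tight closure over $R$. The only cosmetic difference is that you localize both ideals, invoking Remark \ref{RemBigTestEltsExist} and a local--global containment check, whereas the paper keeps everything as annihilators over $R$, rewriting $\tau$ as an intersection over maximal ideals and finitely generated $R_\bm$-submodules of $E_{R_\bm}$ and comparing these annihilators directly with the local ones.
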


\begin{proof}
Let $E =\bigoplus_{\frm} E_R(R/\frm)$ be the direct sum, taken over all maximal ideals $\frm$ of $R$,
of the injective hulls of the residue fields $R/\frm$.
For every finitely generated $R$-submodule $M$ of $E$, there exist finitely many maximal ideals $\frm_1, \dots, \frm_l$ of $R$ and finitely generated $R_{\frm_i}$-submodules $M'_i$ of $E_R(R/\frm_i)=E_{R_{\frm_i}}(R_{\frm_i}/\frm_i R_{\frm_i})$ for all $i=1, \dots, l$ such that $M$ is contained in the $R$-module $M':=\bigoplus_{i=1}^l M'_i$.
In this case,
$$\Ann_R (0^{*(\Delta, \ba^t)}_{M}) \supseteq \Ann_R (0^{*(\Delta, \ba^t)}_{M'})=\Ann_R (\bigoplus_{i=1}^l 0^{*(\Delta, \ba^t)}_{M'_i})=\bigcap_{i=1}^l \Ann_R (0^{*(\Delta, \ba^t)}_{M'_i}).$$
Here note that $M'_i$ is also a finitely generated $R$-module for all $i=1, \dots, l$, because it is supported at the maximal ideal $\frm_i$.
Therefore, by Definition-Proposition \ref{PropDefClassicTestElements}, one has $\tau(X,\Delta; \ba^t)=\bigcap_{\frm} \bigcap_{M \subseteq E_{R_{\frm}}} \Ann_R (0^{*(\Delta, \ba^t)}_{M})$, where $\frm$ runs through all maximal ideals of $R$ and $M$ runs through all finitely generated $R_{\frm}$-submodules of $E_{R_{\frm}}:=E_{R_{\frm}}(R_{\frm}/\frm R_{\frm})$.
Note that the ideal $\Ann_R (0^{*(\Delta, \ba^t)}_{M})$ is contained in $\Ann_{R_{\frm}} (0^{*(\Delta, \ba^t)}_{M})$.  Since we are assuming that $K_X + \Delta$ is $\bQ$-Cartier, by \cite[Definition-Theorem 6.5]{HaraYoshidaGeneralizationOfTightClosure} and \cite[Theorem 2.8 (2)]{TakagiInterpretationOfMultiplierIdeals}, we see that $\bigcap_{M \subseteq E_{R_{\frm}}} \Ann_{R_{\frm}} (0^{*(\Delta, \ba^t)}_{M}) = \Ann_R (0^{*(\Delta, \ba^t)}_{E_{R_{\bm}}})$.  Therefore
$$\tau(X; \Delta, \ba^t) \subseteq \bigcap_{\frm} \Ann_{R_{\frm}} (0^{*(\Delta, \ba^t)}_{E_{R_{\frm}}})=\Ann_R (0^{*(\Delta, \ba^t)}_{E})=\tau_b(X; \Delta, \ba^t).$$
The converse inclusion is obvious.
\end{proof}
\begin{remark}
We point out that by \cite{LyubeznikSmithStrongWeakFregularityEquivalentforGraded} and also \cite[Lemma 3]{BlickleMultiplierIdealsAndModulesOnToric}, the big test ideal and the test ideal also coincide in the case where $\Delta = 0$, $R=\bigoplus_{n \ge 0} R_n$ is an $\N$-graded ring with $R_0$ an $F$-finite field and $\ba$ a homogeneous ideal.
\end{remark}
The following characterization of the test ideal is simply a globalized version of a result of the second author, \cite[Proposition 4.8]{SchwedeFAdjunction}.  Also compare with Definition-Proposition \ref{DefnPropBigTestIdeals} (4) and \cite[Theorem 2.6]{SmithFRatImpliesRat}.

\begin{proposition} \cite[Proposition 4.8]{SchwedeFAdjunction}, \cite{SchwedeCentersOfFPurity}
\label{propCharTestSmallestFixed}
Suppose that $(X=\Spec R, \Delta, \ba)$ is a triple where $R$ is an $F$-finite normal domain of characteristic $p>0$.
Furthermore, suppose that $K_X+\Delta$ is $\Q$-Cartier with index not divisible by $p>0$, and let $\phi_e: F^e_*\sL \to \cO_X$ be the corresponding map for some $e \in \N$.
Then for all real numbers $t>0$, the ideal $\tau_b(X;\Delta, \ba^t)(=\tau(X;\Delta, \ba^t))$ is the unique smallest nonzero ideal $J$ such that
\[
\phi_{ne}(F^{ne}_* \ba^{\lceil t(p^{ne}-1) \rceil} J \sL^{1 + p^e + \ldots + p^{(n-1)e}}) \subseteq J
\]
for all integers $n \geq 0$.
\end{proposition}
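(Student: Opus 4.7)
The plan is to reduce to the local setting and invoke the analogous statement for local rings, namely \cite[Proposition~4.8]{SchwedeFAdjunction}, of which the present result is a globalization.

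First, I would observe that both assertions --- ($\alpha$) $\tau_b$ satisfies the stated invariance and ($\beta$) $\tau_b$ is the unique smallest nonzero ideal with this property --- can be checked stalk-by-stalk at each maximal ideal $\frm$ of $R$. For ($\alpha$), the containment is a local condition on ideals. For ($\beta$), since $R$ is a domain, any nonzero global ideal $J$ has $J_\frm \neq 0$ at every $\frm$, so it suffices to establish $(\tau_b)_\frm \subseteq J_\frm$ at every stalk. Crucially, by Remark \ref{RemBigTestEltsExist}, $\tau_b$ commutes with localization, giving $(\tau_b)_\frm = \tau_b(R_\frm;\Delta_\frm,\ba_\frm^t)$. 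Moreover, on each stalk the line bundle $\sL$ becomes free, and a trivialization turns $\phi_e$ into a map $F^e_* R_\frm \to R_\frm$ and reduces the invariance to $\phi_{ne}(F^{ne}_*\ba_\frm^{\lceil t(p^{ne}-1) \rceil} J_\frm) \subseteq J_\frm$ for all $n \geq 0$. In this form, the statement is precisely \cite[Proposition~4.8]{SchwedeFAdjunction}.

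Recalling the key ideas of the local proof: for ($\alpha$), one uses characterization (4) of Definition-Proposition \ref{DefnPropBigTestIdeals} to write a general element of $\tau_b$ as $\phi'(z)$, with $\phi'$ a Cartier-linear map in the $\Delta$-compatible Hom at some level $e'$ and $z \in d\ba^{\lceil t(p^{e'}-1)\rceil}$. The $R$-linearity of $\phi'$ gives $\phi_{ne}(a\phi'(z)) = \psi(a^{p^{e'}}z)$, where $\psi := \phi_{ne} \circ F^{ne}_*\phi'$ is a $\Delta$-compatible Cartier operator at level $ne+e'$; the exponent estimate $\lceil t(p^{ne+e'}-1)\rceil \leq p^{e'}\lceil t(p^{ne}-1)\rceil + \lceil t(p^{e'}-1)\rceil$ places $a^{p^{e'}}z$ in $d\ba^{\lceil t(p^{ne+e'}-1)\rceil}$, so applying (4) at level $ne+e'$ places the image in $\tau_b$. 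For ($\beta$), a nonzero $J$ with the invariance meets $\tau_b$ nontrivially (as $R_\frm$ is a domain); any $0 \neq c \in J \cap \tau_b$ is itself a big sharp test element, so (4) applied with $c$ in place of $d$ expresses $\tau_b$ as a sum over all Cartier operators and all levels. In the local setting $\Hom_{R_\frm}(F^{ne}_*R_\frm(\lceil(p^{ne}-1)\Delta_\frm\rceil), R_\frm)$ is a rank-one free $F^{ne}_*R_\frm$-module generated by $\phi_{ne}$, so each contribution at a level $ne$ is of the form $\phi_{ne}(F^{ne}_*rc\ba^{\lceil t(p^{ne}-1)\rceil})$ and hence lies in $J$ by the invariance applied to $rc \in J$.

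The main obstacle is handling, in ($\beta$), the Cartier operators at levels $e' \notin e\bN$: these are not directly iterates of $\phi_e$, so one must bootstrap by composing $\phi'$ with appropriate iterates of $\phi_e$ to land at a level in $e\bN$ and then invoke the exponent bookkeeping above. This Frobenius-boosting step --- which shows that the invariance at levels $ne$ alone traps all contributions to $\tau_b$ --- is the technical core of the local argument in \cite{SchwedeFAdjunction} and explains why the specific choice of $e$ (rather than the minimal valid level) is immaterial to the statement.
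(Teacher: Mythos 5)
Your proposal is correct and is essentially the paper's own approach: the paper offers no argument for this proposition beyond citing \cite[Proposition 4.8]{SchwedeFAdjunction} and \cite{SchwedeCentersOfFPurity} and calling it a ``globalized version'' of the local statement, and your stalk-by-stalk reduction --- using that $\tau_b$ commutes with localization (Remark \ref{RemBigTestEltsExist}), that $J_\frm \neq 0$ for nonzero $J$ in a domain, and that $\sL$ trivializes locally so that $\phi_{ne}$ becomes the $n$-fold composite of a map $F^e_* R_\frm \to R_\frm$ as in Remark \ref{RemarkDefinitionOfComposedPInverseLinear} --- supplies exactly the routine globalization the paper leaves implicit. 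One caution about your recollection of the cited local proof (which you defer to the reference, so it does not affect your argument): composing a level-$e'$ operator with iterates of $\phi_e$ produces operators at levels $e' + ne$, which never lie in $e\bN$ when $e \nmid e'$, so the ``land at a level in $e\bN$'' bootstrap cannot work as described; the actual mechanism is to show that the sum over levels in $e\bN$ alone, with a single well-chosen test element, already recovers $\tau_b$ (compare Proposition \ref{PropositionUniformExistenceForTestElts} and Definition \ref{DefnAlternateAdjointTestIdeal} with $Q=(0)$, or \cite[Lemma 2.1]{HaraTakagiOnAGeneralizationOfTestIdeals}).
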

We conclude this subsection by relating the test ideals introduced up to now with the description given in the case of a regular ring $R$ in \cite{BlickleMustataSmithDiscretenessAndRationalityOfFThresholds}. There, the test ideal is constructed from an operation on ideals of $R$ which is inverse to the operation of the Frobenius sending an ideal $I$ to the ideal $I^{[p^e]}$. Namely, if $J \subseteq R$ is an ideal one defines $J^{[1/p^e]}$ to be the smallest ideal $I$ such that $I^{[p^e]} \supseteq J$.  Then the test ideal of $(R,\ba^t)$ (and temporarily denoted by $\tau'(R,\ba^t)$) is defined as the stable member of the increasing chain of ideals $(\ba^{\lceil tp^e \rceil})^{[1/p^e]}$:
\begin{equation}
\label{EqnBMSTestIdeal}
    \tau'(R,\ba^t) = (\ba^{\lceil tp^e \rceil})^{[1/p^e]} \text{ for $e \gg 0$.}
\end{equation}
To see that $\tau'(R,\ba^t) = \tau_b(R,\ba^t)$ we will prove the following proposition (which is well known to experts) which makes the connection with Definition-Proposition \ref{DefnPropBigTestIdeals} (5) transparent.  The following was also observed in  \cite[Remark 2.2]{TakagiTakahashiDModulesOverRingsWithFFRT}.  Here, we use the notation $\Hom_R(F^e_*R,R) \cdot F^e_*J$ to denote
\[
\sum_{\phi \in \Hom_R(F^e_*R,R)}(\phi(F^e_* J)).
\]
\begin{proposition}
\label{propInverseFrobSplitting}
Suppose that $R$ is regular and  $F$-finite.  Let $J \subseteq R$ be an ideal. Then
\[
 J^{[1/p^e]} = \Hom_R(F^e_*R,R) \cdot F^e_*J.
\]
Furthermore, if we assume that $\Hom_R(F^e_* R, R) \cong F^e_* R$ (this happens in an affine neighborhood of every point on a regular scheme), then
\[
    J^{[1/p^e]} = \phi_e(F^e_*J)
\]
where $\phi_e$ is a local generator of $\Hom_R(F^e_*R,R)$ as an $F^e_*R$-module.
\end{proposition}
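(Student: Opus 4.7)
The plan is to establish the two containments $\Hom_R(F^e_*R,R)\cdot F^e_*J \subseteq J^{[1/p^e]}$ and $J^{[1/p^e]} \subseteq \Hom_R(F^e_*R,R)\cdot F^e_*J$ separately, and then derive the ``furthermore'' statement as an immediate consequence once a local generator $\phi_e$ is fixed. First I would note that since $R$ is regular, Kunz's theorem implies that $F^e$ is flat (in fact $F^e_*R$ is a finitely generated projective $R$-module by $F$-finiteness), so $(\bigcap_\alpha I_\alpha)^{[p^e]} = \bigcap_\alpha I_\alpha^{[p^e]}$; consequently the collection $\{I : I^{[p^e]}\supseteq J\}$ has a least element and $J^{[1/p^e]} = \bigcap\{I : I^{[p^e]}\supseteq J\}$.

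For the easy containment, take any $\phi \in \Hom_R(F^e_*R,R)$ and any $j\in J$, and any ideal $I$ with $I^{[p^e]}\supseteq J$. Since $j\in I^{[p^e]}$, we can write $j=\sum_i r_i^{p^e} s_i$ with $r_i\in I$ and $s_i\in R$. Viewing $\phi$ as a $p^{-e}$-linear map (that is, $\phi(r^{p^e}s)=r\phi(s)$), we obtain $\phi(j)=\sum_i r_i\phi(s_i)\in I$. Intersecting over all such $I$ yields $\phi(j)\in J^{[1/p^e]}$, which gives the inclusion.

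For the reverse containment, set $I := \Hom_R(F^e_*R,R)\cdot F^e_*J$; by minimality of $J^{[1/p^e]}$ it suffices to show $J \subseteq I^{[p^e]}$. Here I would invoke the dual basis lemma for the projective $R$-module $M:=F^e_*R$: there exist finitely many $\phi_1,\ldots,\phi_k \in \Hom_R(F^e_*R,R)$ and $m_1,\ldots,m_k \in F^e_*R$ such that $x = \sum_i \phi_i(x)\cdot m_i$ for every $x\in F^e_*R$, where the dot denotes the twisted $R$-action, i.e.\ the identity $x=\sum_i \phi_i(x)^{p^e} m_i$ in $R$. Applying this to $x=j\in F^e_*J$ gives $j = \sum_i \phi_i(j)^{p^e} m_i$, and since each $\phi_i(j) \in I$, we conclude $j\in I^{[p^e]}$.

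Finally, for the ``furthermore'' clause, assume $\Hom_R(F^e_*R,R) \cong F^e_*R$ as $F^e_*R$-modules with generator $\phi_e$. Then every $\phi \in \Hom_R(F^e_*R,R)$ has the form $\phi(-) = \phi_e(f\cdot -)$ for some $f \in F^e_*R$, so $\phi(F^e_*J) = \phi_e(f\cdot F^e_*J)$; as $f$ ranges over $F^e_*R = R$ (as sets) and $j$ over $J$, the products $fj$ exhaust $J$, hence $\Hom_R(F^e_*R,R)\cdot F^e_*J = \phi_e(F^e_*J)$. The main point requiring care is bookkeeping the two distinct $R$-module structures in play (the ring structure on $R$ versus the Frobenius-twisted structure on $F^e_*R$), particularly when translating the dual basis identity from $M$ to $R$ via the equality of underlying sets; once this is kept straight, the argument is straightforward.
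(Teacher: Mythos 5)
Your proof is correct, and it is more self-contained than the paper's. For the first equality the paper simply points to the proof of Proposition 2.5 of Blickle--Musta\c{t}\u{a}--Smith, where the identity $J^{[1/p^e]} = \Hom_R(F^e_*R,R)\cdot F^e_*J$ is extracted using an explicit free basis of $F^e_*R$ over $R$; you instead prove it from scratch, and your two containments are both sound: the inclusion $\Hom_R(F^e_*R,R)\cdot F^e_*J \subseteq J^{[1/p^e]}$ by testing against every $I$ with $I^{[p^e]}\supseteq J$ and using $p^{-e}$-linearity, and the reverse inclusion via the dual basis lemma for the finitely generated projective module $F^e_*R$, which translated through the Frobenius twist gives exactly $j=\sum_i\phi_i(F^e_*j)^{p^e}m_i'\in I^{[p^e]}$. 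This replaces the ``choose a free basis'' argument of the cited source by the dual-basis identity, so it works verbatim when $F^e_*R$ is only projective rather than free, and it makes explicit the preliminary point (which the paper leaves implicit in the definition of $J^{[1/p^e]}$) that the smallest ideal $I$ with $I^{[p^e]}\supseteq J$ exists; your justification via Kunz plus projectivity (so that bracket powers commute with arbitrary intersections) is fine, and one could alternatively get existence from Noetherianity together with commutation for finite intersections. Your treatment of the ``furthermore'' clause coincides with the paper's one-line argument: writing every $\phi$ as $\phi_e(f\cdot\blank)$ and using $F^e_*R\cdot F^e_*J=F^e_*J$.
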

\begin{proof}
A quick proof of this goes as follows: In the proof of \cite[Proposition 2.5]{BlickleMustataSmithDiscretenessAndRationalityOfFThresholds} it is observed that $J^{[1/p^e]}$ is equal to $\Hom_R(F^e_*R,R) \cdot F^e_*J$.   For the second part, since $\Hom_R(F^e_*R,R) = F^e_*R \cdot \phi_e = \phi_e(F^e_*R \cdot \blank)$, and since $F_*^eRF_*^eJ=F_*^eJ$, the result follows.
\end{proof}
Hence, by Definition-Proposition \ref{DefnPropBigTestIdeals} (5) we have that
\[
\begin{split}
        \tau'(R,\ba^t) &= \sum_e (\ba^{\lceil tp^e \rceil})^{[1/p^e]} \\
        &= \sum_e \Hom_R(F^e_*R,R) \cdot F_*^e\ba^{\lceil t p^e \rceil} \\
        &= \tau_b(R,\ba^t)
\end{split}
\]
since we may take $d=1$ as the big sharp test element of the regular ring $R$.

\begin{remark}
\label{remarkCartierMap}
A typical map $\phi_e$ considered in the proposition arises from the Cartier map $C^e: F^e_*\omega_R \to \omega_R$, that is the map dual to the Frobenius under Grothendieck-Serre duality. In the case of the polynomial ring $R=k[x_1,\ldots,x_n]$ over a perfect field $k$, it can be explicitly described as follows. Using multi-index notation $x^{\underline{i}}=x^{i_1}\cdot\ldots\cdot x^{i_n}$ for ${\underline{i}}=(i_1,\ldots,i_n)$ and $dx=dx_1\wedge\ldots\wedge dx_n$ the Cartier map is given by sending the differential form $x^{\underline{i}}\cdot\frac{dx}{x}$ to $x^{{\underline{i}}/p^e}\cdot\frac{dx}{x}$ or zero if one of the exponents is not an integer, i.e.~some $i_j$ is not divisible by $p^e$.

Identifying $R$ with $\omega_R$ by sending $1$ to $dx$, the induced map $\psi_e: F^e_*R \to R$ is then given by
\[
    x^{\underline{i}} \mapsto x^{(({\underline{i}}+\underline{1})/p^e)-\underline{1}}
\]
where $\underline{1}=(1,\ldots,1)$ and the expression on the right hand side is zero if one of the exponents is not an integer. In particular, $x^{{p^e-1}}\defeq x_1^{p^e-1}\cdot\ldots\cdot x_n^{p^e-1}$ is mapped to $1$ and all other elements of the basis $\{e_{e,{\underline{i}}}\defeq x^{\underline{i}}\}_{0\leq i_j \leq p^e-1}$ of $F^e_*R$ as an $R$--module are mapped to zero. Since $x^{{p^e-1}}$ is divisible by all the other elements of that basis, it follows that the maps $\psi_{e,{\underline{i}}}\defeq F^e_*(x^{p^e-1}/x^{\underline{i}})\psi_e$ for $0 \leq i_j \leq p^e-1$ form the $R$-module basis of $\Hom_R(F^e_*R,R)$ dual to $\{e_{e,\underline{i}}\}$, hence $\psi_e$ generates $\Hom_R(F^e_*R,R)$ as an $F^e_*R$-module.
\end{remark}

\subsection{Test ideals along \texorpdfstring{$F$}{F}-pure centers}
\label{SectionRestrictionOfTestIdeals}

In this section we study a variant of test ideals which will play a crucial role in our generalization of the discreteness and rationality proof found in \cite{BlickleMustataSmithDiscretenessAndRationalityOfFThresholds}.  Suppose that $R = S/I$ where $S$ is regular.  The goal of this section is to construct a test ideal theory which, when combined with Theorem \ref{TheoremFedderFadjunction}, allows us to compute the usual test ideal of $R$ in terms of this variant of a test ideal for $S$.  The basic idea is as follows:

Suppose $R$ is a domain.  In the classical tight closure theory, one chooses the elements used to test tight closure from $R \setminus \{ 0\}$.  In this variant, we choose a smaller multiplicative set (note that similar constructions have appeared before in \cite{HochsterHunekeTC1}, \cite{TakagiPLTAdjoint} and \cite{TakagiHigherDimensionalAdjoint}).

The main difficulty with constructing such a variant of a test ideal is the existence of test elements.  Therefore, one must choose the set to replace $R \setminus \{0\}$ with some care.  The basics of this theory was worked out in \cite[Section 6]{SchwedeFAdjunction}.  However, we will need some very minor reformulations for our purposes.

\begin{definition}
\label{DefnUltraGeneralizedTestIdeal}
Suppose that $(X = \Spec R, \Delta, \ba^t)$ is a triple where $\O_X((p^{e_0} - 1)(K_X + \Delta))$ is free and $e_0$ is chosen to be minimal with respect to this condition.  Set $\phi_{e_0} : F^{e_0}_* \O_X \rightarrow \O_X$ to be a map corresponding to $\Delta$.  Further suppose that $W \subseteq \Spec X$ is a normal integral scheme corresponding to a prime ideal $Q \in \Spec R$ and that
\begin{itemize}
 \item[(i)]  $W$ is a center of $F$-purity for $(X, \Delta)$.
 \item[(ii)]  $(X, \Delta)$ is sharply $F$-pure at $Q$, the generic point of $W$.
 \item[(iii)]  $\ba$ does not vanish on $W$.
\end{itemize}
Then $\tau_b(X, \nsubseteq Q; \Delta, \ba^t)$ is defined to be the unique smallest ideal $J$ not contained in $Q$ such that $\phi_{ne_0}(F^{ne_0}_* \ba^{\lceil t(p^{ne_0} - 1) \rceil} J) \subseteq J$ for all $n > 0$.
\end{definition}

\begin{remark}
The fact that $e_0$ was chosen to be the smallest such $e_0$ is not actually a necessary hypothesis.  It is, however, easier to simply assume $e_0$ is minimal than to prove that any choice of $e_0$ gives the same definition.
\end{remark}

\begin{remark}
If $\ba = R$, then it is easy to see that $\tau_b(X, \nsubseteq Q; \Delta, \ba^t) = \tau_b(X, \nsubseteq Q; \Delta)$ is the unique smallest ideal $J$ not contained in $Q$ such that $\phi_{e_0}(F^{e_0}_* J) \subseteq J$ (for just the one map $\phi_{e_0}$).  It then follows that $\phi_{e_0}(F^{e_0}_* \tau_b(X, \nsubseteq Q; \Delta)) = \tau_b(X, \nsubseteq Q; \Delta)$, for if not, then there are only two possibilities:
\begin{itemize}
 \item[(1)]  $\phi_{e_0}(F^{e_0}_* \tau_b(X, \nsubseteq Q; \Delta)) \subseteq Q$.
 \item[(2)]  $\phi_{e_0}(F^{e_0}_* \tau_b(X, \nsubseteq Q; \Delta)) \nsubseteq Q$.
\end{itemize}
If condition (1) holds, then after localizing at $Q$, we obtain that $\phi_{e_0}(F^{e_0}_* R_Q) \subseteq Q R_Q$ which contradicts the sharp $F$-purity of $(X, \Delta)$ at $Q$.  On the other hand, if (2) holds, then since $\phi_{e_0}(F^{e_0}_* \tau_b(X, \nsubseteq Q; \Delta)) \subsetneq \tau_b(X, \nsubseteq Q; \Delta)$, we have contradicted the minimality of $\tau_b(X, \nsubseteq Q; \Delta)$.
\end{remark}

The difficulty related to Definition \ref{DefnUltraGeneralizedTestIdeal} is showing that $\tau_b(X, \nsubseteq Q; \Delta, \ba^t)$ exists.  However, once one has that, one easily obtains the following result (compare with \cite[Theorem 4.4]{TakagiPLTAdjoint}):

\begin{theorem} \cite[Corollary 6.9]{SchwedeFAdjunction}
\label{ThmRestrictionForTestIdeals}
In the context of Definition \ref{DefnUltraGeneralizedTestIdeal}, set $\Delta_W$ to be the $\bQ$-divisor on $W$ corresponding to $\Delta$ via Theorem \ref{TheoremFAdjunction}.  Then $\tau_b(X, \nsubseteq Q; \Delta, \ba^t) |_W = \tau_b(W; \Delta_W, \overline{\ba}^t)$.
\end{theorem}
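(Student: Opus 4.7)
The plan is to establish both inclusions between $\widetilde J := \tau_b(X,\nsubseteq Q;\Delta,\ba^t)$ restricted to $W$ and $J' := \tau_b(W;\Delta_W,\overline{\ba}^t)$ using the minimality/stability characterizations from Definition~\ref{DefnUltraGeneralizedTestIdeal} and Proposition~\ref{propCharTestSmallestFixed}. The bridge is the commutative diagram in the proof of Theorem~\ref{TheoremFAdjunction}: the map $\phi_{e_0}\colon F^{e_0}_*\O_X \to \O_X$ corresponding to $\Delta$ induces, modulo $I_W$, the map $\overline{\phi_{e_0}}\colon F^{e_0}_*\O_W \to \O_W$ corresponding to $\Delta_W$; iterating (using that $(p^{e_0}-1)(K_X+\Delta)$ is Cartier so the line bundle $\sL$ from (\ref{EqnGlobalBijection}) is trivial) yields, for every $n \geq 0$ and every ideal $I \subseteq R$, the identity
$$
\overline{\phi_{ne_0}\bigl(F^{ne_0}_*\ba^{\lceil t(p^{ne_0}-1)\rceil}I\bigr)} \;=\; \overline{\phi_{ne_0}}\bigl(F^{ne_0}_*\overline{\ba}^{\lceil t(p^{ne_0}-1)\rceil}\overline{I}\bigr),
$$
where the bars denote restriction modulo $I_W$.

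For the inclusion $\widetilde J\vert_W \subseteq J'$, I would apply the identity above with $I = \widetilde J$ to the stability relation $\phi_{ne_0}(F^{ne_0}_*\ba^{\lceil t(p^{ne_0}-1)\rceil}\widetilde J) \subseteq \widetilde J$. This shows that $\widetilde J\vert_W$ is stable under $\overline{\phi_{ne_0}}(F^{ne_0}_*\overline{\ba}^{\lceil t(p^{ne_0}-1)\rceil}\cdot\blank)$ for every $n$, and $\widetilde J\vert_W \neq 0$ because $\widetilde J \nsubseteq Q$. Since Proposition~\ref{propCharTestSmallestFixed} (applied on $W$) characterizes $J'$ as the unique smallest nonzero ideal with this stability property, $J' \subseteq \widetilde J\vert_W$.

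For the reverse inclusion, I would pick any ideal $\widetilde{J'} \subseteq R$ with $(\widetilde{J'}+I_W)/I_W = J'$ and form
$$
K \;:=\; \sum_{n\geq 0}\phi_{ne_0}\bigl(F^{ne_0}_*\,\ba^{\lceil t(p^{ne_0}-1)\rceil}\,\widetilde{J'}\bigr) \;\subseteq\; R.
$$
The composition rule of Remark~\ref{RemarkDefinitionOfComposedPInverseLinear},
$$
\phi_{me_0}\bigl(F^{me_0}_*x\cdot\phi_{ne_0}(F^{ne_0}_*y)\bigr) \;=\; \phi_{(m+n)e_0}\bigl(F^{(m+n)e_0}_*\,x^{p^{ne_0}}y\bigr),
$$
combined with the numerical inequality $\lceil t(p^{me_0}-1)\rceil\,p^{ne_0} + \lceil t(p^{ne_0}-1)\rceil \geq \lceil t(p^{(m+n)e_0}-1)\rceil$, implies $\phi_{me_0}(F^{me_0}_*\ba^{\lceil t(p^{me_0}-1)\rceil}K) \subseteq K$ for every $m$. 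Restricting $K$ to $W$, the $n=0$ summand already produces $J'$, while each higher summand lies in $J'$ by the stability of $J'$ already used above; hence $K\vert_W = J'$. In particular $K \nsubseteq Q$, so by minimality of $\widetilde J$ we have $\widetilde J \subseteq K$, and therefore $\widetilde J\vert_W \subseteq K\vert_W = J'$.

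The main obstacle is really the existence (i.e.~well-definedness) of $\tau_b(X,\nsubseteq Q;\Delta,\ba^t)$ itself, which one takes from \cite[Section 6]{SchwedeFAdjunction}. Once that is in hand, the restriction statement reduces to the two minimality comparisons above, the only technical bookkeeping being the ceiling inequality that makes the composition of the $\phi_{ne_0}$'s respect the $\ba$-coefficient system.
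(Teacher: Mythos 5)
Your argument is correct, but be aware that the paper does not actually prove Theorem \ref{ThmRestrictionForTestIdeals}; it quotes \cite[Corollary 6.9]{SchwedeFAdjunction} without proof, so what you have written is a self-contained reconstruction from ingredients the paper does supply. Your two minimality comparisons are sound: the bridge identity holds because the $F$-pure-center condition $\phi_{e_0}(F^{e_0}_* Q)\subseteq Q$ iterates to $\phi_{ne_0}(F^{ne_0}_* Q)\subseteq Q$, so each $\phi_{ne_0}$ descends to $W$ and reduction mod $Q$ commutes with the iterates and with products of ideals; moreover $\overline{\phi_{e_0}}$ is, by the construction in Theorem \ref{TheoremFAdjunction}, exactly a map corresponding to $\Delta_W$, so Proposition \ref{propCharTestSmallestFixed} applies on $W$ with the same $e_0$ (that proposition allows any admissible $e$, so minimality of $e_0$ on $W$ is irrelevant). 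The ceiling inequality $\lceil t(p^{me_0}-1)\rceil p^{ne_0}+\lceil t(p^{ne_0}-1)\rceil\geq\lceil t(p^{(m+n)e_0}-1)\rceil$ and the composition rule both check out, so $K$ is indeed $\phi$-stable, $K|_W=J'$, and the two containments follow from the minimality statements in Definition \ref{DefnUltraGeneralizedTestIdeal} and Proposition \ref{propCharTestSmallestFixed}. Two small points of hygiene: the triviality of $\sL$ comes from the hypothesis in Definition \ref{DefnUltraGeneralizedTestIdeal} that $\O_X((p^{e_0}-1)(K_X+\Delta))$ is \emph{free}, not merely from $(p^{e_0}-1)(K_X+\Delta)$ being Cartier; and in the second inclusion it is cleanest to take $\widetilde{J'}$ to be the full preimage of $J'$ in $R$. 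By contrast, the paper's own toolkit suggests the alternative (and presumably the cited source's) route of restricting the explicit sum description $\tau_{b}(R,\nsubseteq Q;\Delta,\ba^t)=\sum_{n\geq 0}\phi_{ne_0}(F^{ne_0}_* c\,\ba^{\lceil t(p^{ne_0}-1)\rceil})$ from Definition \ref{DefnAlternateAdjointTestIdeal} modulo $Q$, which requires checking that the image of the element $c$ of Proposition \ref{PropositionUniformExistenceForTestElts} plays the analogous role on $W$; your double-minimality argument avoids that verification, at the cost of invoking existence of both test ideals, which you correctly flag as the real content imported from \cite{SchwedeFAdjunction}.
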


To show that $\tau_b(X, \nsubseteq Q; \Delta, \ba^t)$ exists one first proves the following lemma.  The proof has similarities to the usual proof of existence of test elements for reduced $F$-finite rings.

\begin{proposition} \cite[Proposition 6.14]{SchwedeFAdjunction}
\label{PropositionUniformExistenceForTestElts}
In the context of Definition \ref{DefnUltraGeneralizedTestIdeal}, there is an element $c \in \ba \cap (R \setminus Q)$ such that, for every $d \in R \setminus Q$, there exists an integer $n_d > 0$ so that $c \in \phi_{n_de_0}(F^{n_de_0}_* d\ba^{\lceil t(p^{n_de_0} - 1) \rceil})$.
\end{proposition}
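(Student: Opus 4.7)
The plan is to adapt the classical Hochster--Huneke proof of existence of (sharp) test elements to the setting of test ideals along $F$-pure centers (compare \cite{SchwedeFAdjunction}); the essential new ingredients are sharp $F$-purity at $Q$ (condition (ii) of Definition \ref{DefnUltraGeneralizedTestIdeal}) and $\ba\not\subseteq Q$ (condition (iii)).

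First I would record a general non-vanishing principle: since $(X,\Delta)$ is sharply $F$-pure at $Q$ and $\O_X((p^{e_0}-1)(K_X+\Delta))\cong \O_X$ by the minimality of $e_0$, the map $\phi_{e_0}\colon F^{e_0}_*R\to R$ is surjective after localizing at $Q$, and hence so is each iterate $\phi_{ne_0}$. Combined with $\ba R_Q=R_Q$, this shows that for every $n\geq 1$ and every $r\in R\setminus Q$ the ideal $\phi_{ne_0}(F^{ne_0}_*\, r\ba^{\lceil t(p^{ne_0}-1)\rceil})$ equals $R_Q$ after localizing, and so is not contained in $Q$. Applied with $r=1$, this yields some $n_0\geq 1$ and $y_0\in\ba^{\lceil t(p^{n_0e_0}-1)\rceil}$ with $c_0:=\phi_{n_0e_0}(F^{n_0e_0}_*y_0)\notin Q$. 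Fixing any $a\in\ba\setminus Q$, the $p^{-n_0e_0}$-linearity identity
\[
ac_0=\phi_{n_0e_0}\bigl(F^{n_0e_0}_*(a^{p^{n_0e_0}}y_0)\bigr)
\]
furnishes the candidate $c:=ac_0\in\ba\cap(R\setminus Q)$, which already lies in the image $\phi_{n_0e_0}(F^{n_0e_0}_*\ba^{\lceil t(p^{n_0e_0}-1)\rceil})$.

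The main step is to show this fixed $c$ works for every $d\in R\setminus Q$. Using the semigroup identity $\phi_{(n_0+m)e_0}=\phi_{n_0e_0}\circ F^{n_0e_0}_*\phi_{me_0}$, it would suffice, for each $d$, to find $m_d\geq 1$ and $z\in\ba^{\lceil t(p^{(n_0+m_d)e_0}-1)\rceil}$ with $\phi_{m_de_0}(F^{m_de_0}_*(dz))=y_0$; then $n_d:=n_0+m_d$ produces $c=\phi_{n_de_0}(F^{n_de_0}_*(dz))$ as required. Over $R_Q$ such $z$ always exists by surjectivity of $\phi_{m_de_0}$ together with $d$ and $\ba$ being units.

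The main obstacle, and the technical heart of the proof, is globalizing this $R_Q$-solution: lifting $z\in R_Q$ to $z\in R$ typically leaves $\phi_{m_de_0}(F^{m_de_0}_*(dz))=sy_0$ rather than $y_0$ itself, for some spurious denominator $s\in R\setminus Q$. This is absorbed by the standard Hochster--Huneke trick: $F$-finiteness allows one to choose $m_d$ large enough so that the factor $s$ is swallowed using the identity $\phi_{me_0}(F^{me_0}_*(s^{p^{me_0}}x))=s\phi_{me_0}(F^{me_0}_*x)$, while the simultaneously growing $\ba$-exponent $\lceil t(p^{(n_0+m_d)e_0}-1)\rceil$ keeps $z$ in the prescribed $\ba$-power (using $\ba R_Q=R_Q$, so that the $\ba$-constraint imposes no local obstruction). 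Carrying out this bookkeeping carefully, along the lines of \cite[Proposition 6.14]{SchwedeFAdjunction}, then produces the desired $c$.
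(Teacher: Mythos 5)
The paper itself gives no proof of this statement: it is quoted from \cite[Proposition 6.14]{SchwedeFAdjunction}, with only the remark that the argument is similar to the classical existence proof for test elements. So your task was to supply that omitted argument, and your sketch has a genuine gap at exactly the point you call ``the technical heart.'' First, the choice of $c$ is already wrong in general. Any $c$ satisfying the conclusion is automatically contained in $\tau_b(R,\nsubseteq Q;\Delta,\ba^t)$: if $J$ is any ideal with $J \nsubseteq Q$ and $\phi_{ne_0}(F^{ne_0}_*\ba^{\lceil t(p^{ne_0}-1)\rceil}J)\subseteq J$ for all $n$, pick $d\in J\setminus Q$; then $c\in \phi_{n_de_0}(F^{n_de_0}_* d\,\ba^{\lceil t(p^{n_de_0}-1)\rceil})\subseteq J$, so $c$ lies in the smallest such ideal. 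But your $c=a\,\phi_{n_0e_0}(F^{n_0e_0}_*y_0)$ is essentially an arbitrary element of $\ba\cap(R\setminus Q)$ lying in the image of $\phi_{n_0e_0}$. Take $Q=(0)$, $\Delta=0$, $\ba=R$ and $R=\mathbb{F}_p[x,y,z]/(x^3+y^3+z^3)$ with $p\equiv 1 \pmod 3$: the pair is sharply $F$-pure, so $\phi_{e_0}$ is surjective and your recipe allows $c_0=1$, hence $c$ can be any nonzero element, e.g.\ one outside $\tau(R)=(x,y,z)$; for such $c$ the conclusion fails. So no amount of bookkeeping can complete the proof for the $c$ you fixed; the element must be chosen with the uniformity over $d$ built in (morally, $c$ must already be a ``test element along $Q$'').

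Second, the mechanism you propose for the key step goes the wrong way. From the local solution you only get $sy_0\in\phi_{m_de_0}(F^{m_de_0}_*d\,\ba^{N})$ with $s\in R\setminus Q$ depending on $d$ and $m_d$, and the identity $\phi_{me_0}(F^{me_0}_*(s^{p^{me_0}}x))=s\,\phi_{me_0}(F^{me_0}_*x)$ only \emph{introduces} factors of $s$; it never removes them. Removing $s$ would require a map sending a Frobenius power of $s$ to a controlled factor of $c$, i.e.\ precisely a test-element statement for the $d$-dependent element $s$ --- the argument is circular at its core. The classical Hochster--Huneke blueprint (which \cite[Proposition 6.14]{SchwedeFAdjunction} adapts) avoids this by never localizing at the prime $Q$: one chooses a single element $g\in R\setminus Q$ such that $\phi_{e_0}$ is surjective on $R_g$ and $\ba R_g=R_g$ (surjectivity at $Q$ is an open condition, and $\ba\nsubseteq Q$), so that all denominators arising from $\Hom_{R_g}(F^{ne_0}_*R_g,R_g)\cong\Hom_R(F^{ne_0}_*R,R)_g$ are powers of the one fixed element $g$; these can be absorbed uniformly by the $p^{-e}$-linear identities, and $c$ is then manufactured from $g$ together with an element of $\ba\setminus Q$. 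Your outline correctly handles the easy localization-at-$Q$ statements and the composition $\phi_{(n_0+m)e_0}=\phi_{n_0e_0}\circ F^{n_0e_0}_*\phi_{me_0}$, but the uniformity in $d$ --- which is the entire content of the proposition --- is not established.
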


One can then define $\tau_b(X, \nsubseteq Q; \Delta, \ba^t)$ as follows.

\begin{definition}
\label{DefnAlternateAdjointTestIdeal}
Fix a $c$ as in Proposition \ref{PropositionUniformExistenceForTestElts}.  Then:
\[
 \tau_{b}(R,  \nsubseteq Q; \Delta, \ba^t)= \sum_{n \geq 0} \phi_{ne_0}(F^{ne_0}_*c \ba^{\lceil t(p^{ne_0} - 1) \rceil}).
\]
In particular, $\tau_{b}(R,  \nsubseteq Q; \Delta, \ba^t)$ exists.
Note that the sum stabilizes as a finite sum since $R$ is noetherian.  Further observe that $c$ is contained in $\tau_{b}(R,  \nsubseteq Q; \Delta, \ba^t)$ by an application of Proposition \ref{PropositionUniformExistenceForTestElts} with $d = c$.
\end{definition}

\begin{remark}
It is also easy to see that
\[
\tau_{b}(R,  \nsubseteq Q; \Delta, \ba^t) = \sum_{n \geq k} \phi_{ne_0}(F^{ne_0}_*c \ba^{\lceil t(p^{ne_0} - 1) \rceil}).
\]
for any $k \geq 0$.
\end{remark}

Of course, one has to show that this definition is independent of the choice of $c$ and satisfies the condition of Definition \ref{DefnUltraGeneralizedTestIdeal}, but this is straightforward, see \cite[Section 6]{SchwedeFAdjunction} for details and compare with \cite{TakagiPLTAdjoint}.

\begin{remark}
 If $Q = (0)$, then this construction agrees with the usual construction of the test ideal, see Proposition \ref{propCharTestSmallestFixed}, Definition-Proposition \ref{DefnPropBigTestIdeals}, and \cite[Lemma 2.1]{HaraTakagiOnAGeneralizationOfTestIdeals}.
\end{remark}

\begin{remark}
It is relatively straightforward to see from Definition \ref{DefnAlternateAdjointTestIdeal}, that the formation of $\tau_{b}(R,  \nsubseteq Q; \Delta, \ba^t)$ commutes with localization with multiplicative sets $W$ that do not contain elements of $Q$.  This is because the $c$ from Proposition \ref{PropositionUniformExistenceForTestElts} still retains its defining property after localizing at such a multiplicative set.
\end{remark}

One substantial difference between this characterization of the big test ideal and the characterization used to prove previous discreteness and rationality results is that the exponent that appeared on $\ba$ previously was $\lceil t p^e \rceil$ instead of $\lceil t (p^e - 1) \rceil$.  Because we want to use the previous methods wherever we can, we will give a characterization of $\tau_{b}(R,  \nsubseteq Q; \Delta, \ba^t)$ using $\ba^{\lceil t p^{ne_0} \rceil}$.

\begin{lemma}
\label{LemmaAlternateUltraGeneralizedTestIdeal}
 Assume the notation of Definition \ref{DefnUltraGeneralizedTestIdeal}.  Then we have the following chain of containments:
\[
 \phi_{e_0}\left(F^{e_0}_* \ba^{\lceil t p^{e_0} \rceil} \tau_b(R, \nsubseteq Q; \Delta)\right) \subseteq \phi_{2 e_0}\left(F^{2e_0}_* \ba^{\lceil t p^{2 e_0} \rceil} \tau_b(R, \nsubseteq Q; \Delta)\right) \subseteq \ldots .
\]
Furthermore, the union of these ideals is equal to $\tau_{b}(R,  \nsubseteq Q; \Delta, \ba^t)$.
\end{lemma}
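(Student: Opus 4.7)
The plan is to prove both parts by systematic use of the $p^{-e}$-linearity identity $a\cdot\phi_e(F^e_*x)=\phi_e(F^e_*a^{p^e}x)$ together with the composition rule $\phi_{(n+k)e_0}=\phi_{ne_0}\circ F^{ne_0}_*\phi_{ke_0}$ and the fixed-point relation $\tau=\phi_{e_0}(F^{e_0}_*\tau)$ for $\tau:=\tau_b(R,\nsubseteq Q;\Delta)$ noted in the remark after Definition \ref{DefnUltraGeneralizedTestIdeal}. I write $\tau':=\tau_b(R,\nsubseteq Q;\Delta,\ba^t)$ and $U_n:=\phi_{ne_0}(F^{ne_0}_*\ba^{\lceil tp^{ne_0}\rceil}\tau)$ for the ideals in the chain.

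For the chain inclusion, take $y=\phi_{ne_0}(F^{ne_0}_*a\sigma)\in U_n$, write $\sigma=\sum_i\phi_{e_0}(F^{e_0}_*\sigma_i')$ with $\sigma_i'\in\tau$, use $p^{-e_0}$-linearity to pull $a$ through each $\phi_{e_0}$ as $a^{p^{e_0}}$, and apply the composition rule; the exponent bound $p^{e_0}\lceil tp^{ne_0}\rceil\geq\lceil tp^{(n+1)e_0}\rceil$ places $a^{p^{e_0}}$ in $\ba^{\lceil tp^{(n+1)e_0}\rceil}$, which realizes $y$ as an element of $U_{n+1}$.

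To identify $J:=\bigcup_n U_n$ (finite by noetherianness, so $J=U_N$ for some $N$) with $\tau'$, I argue the two inclusions separately. For $J\subseteq\tau'$, let $c$ be the element of Proposition \ref{PropositionUniformExistenceForTestElts}: it lies in $\tau'$, and its defining property forces $c\in\phi_{n_de_0}(F^{n_de_0}_*dR)$ for every $d\in R\setminus Q$, so Definition \ref{DefnAlternateAdjointTestIdeal} applied to the triple $(R,\Delta)$ shows $\tau=\sum_n\phi_{ne_0}(F^{ne_0}_*cR)$ and any $\sigma\in\tau$ admits a presentation $\sigma=\sum_i\phi_{n_ie_0}(F^{n_ie_0}_*cr_i)$. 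The same Frobenius push-through rewrites $y\in U_n$ as a sum of terms in $\phi_{(n+n_i)e_0}(F^{(n+n_i)e_0}_*\ba^{\lceil tp^{(n+n_i)e_0}\rceil}\tau')$; since $\lceil tp^{(n+n_i)e_0}\rceil\geq\lceil t(p^{(n+n_i)e_0}-1)\rceil$, closure of $\tau'$ under its defining operation places $y$ in $\tau'$.

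For $\tau'\subseteq J$ I invoke the minimality in Definition \ref{DefnUltraGeneralizedTestIdeal} after checking its two defining conditions for $J$. The closure $\phi_{me_0}(F^{me_0}_*\ba^{\lceil t(p^{me_0}-1)\rceil}J)\subseteq J$ follows from one more push-through applied to $J=U_N$, using $p^{Ne_0}\lceil t(p^{me_0}-1)\rceil+\lceil tp^{Ne_0}\rceil\geq tp^{(m+N)e_0}$ to land the result in $U_{m+N}=J$. For $J\not\subseteq Q$, I apply Proposition \ref{PropositionUniformExistenceForTestElts} with $d=c$ to write $c=\phi_{n_ce_0}(F^{n_ce_0}_*cb)$ with $b\in\ba^{\lceil t(p^{n_ce_0}-1)\rceil}$, and rewrite $c^{k+1}=\phi_{n_ce_0}(F^{n_ce_0}_*c^{kp^{n_ce_0}+1}b)$; for $k$ large enough that $kp^{n_ce_0}+1+\lceil t(p^{n_ce_0}-1)\rceil\geq\lceil tp^{n_ce_0}\rceil$, the element $c^{kp^{n_ce_0}+1}b$ lies in $\tau\cdot\ba^{\lceil tp^{n_ce_0}\rceil}$, so $c^{k+1}\in U_{n_c}\subseteq J$, while $c^{k+1}\notin Q$. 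The main obstacle throughout is exponent bookkeeping---matching the $\lceil tp^{me_0}\rceil$ convention of the chain against the $\lceil t(p^{me_0}-1)\rceil$ convention in the definition of $\tau'$---but each Frobenius push-through multiplies the relevant exponent by a high power of $p$, while the convention gap is only $O(t)$, so a single push-through always absorbs the discrepancy.
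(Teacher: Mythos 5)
Your proof is correct, and for the chain of containments and for the inclusion $\tau_{b}(R,\nsubseteq Q;\Delta,\ba^t)\subseteq J$ it runs essentially as the paper does: the fixed-point relation $\tau_b(R,\nsubseteq Q;\Delta)=\phi_{e_0}(F^{e_0}_*\tau_b(R,\nsubseteq Q;\Delta))$ plus $p^{e_0}\lceil tp^{ne_0}\rceil\geq\lceil tp^{(n+1)e_0}\rceil$ gives the chain, and the closure of $J$ under $\phi_{me_0}(F^{me_0}_*\ba^{\lceil t(p^{me_0}-1)\rceil}\cdot\blank)$ together with minimality is exactly the paper's mechanism (the paper works with the full sum rather than the stabilized $U_N$, but the exponent arithmetic is the same). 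Where you genuinely diverge is the reverse inclusion $J\subseteq\tau_{b}(R,\nsubseteq Q;\Delta,\ba^t)$: the paper introduces an auxiliary element $c'$ with $c'\ba^{\lceil t(p^{ne_0}-1)\rceil}\subseteq\ba^{\lceil tp^{ne_0}\rceil}$, proves the intermediate identity (\ref{EqnInteriorContainment}) $\tau_{b}(R,\nsubseteq Q;\Delta,\ba^t)=\sum_{n\geq k}\phi_{ne_0}(F^{ne_0}_*c\,\ba^{\lceil tp^{ne_0}\rceil})$, and then pushes generators $x_{ik}\in\phi_{ke_0}(F^{ke_0}_*(cR))$ of $\tau_b(R,\nsubseteq Q;\Delta)$ through; you instead use only that $c\in\tau_{b}(R,\nsubseteq Q;\Delta,\ba^t)$ and that this ideal is closed under the operations $\phi_{ne_0}(F^{ne_0}_*\ba^{\lceil t(p^{ne_0}-1)\rceil}\cdot\blank)$, after relaxing $\lceil tp^{(n+n_i)e_0}\rceil$ to $\lceil t(p^{(n+n_i)e_0}-1)\rceil$ -- a shorter route that avoids $c'$ and (\ref{EqnInteriorContainment}) entirely. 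You also supply an explicit verification that $J\nsubseteq Q$ (the element $c^{k+1}=\phi_{n_ce_0}(F^{n_ce_0}_*c^{kp^{n_ce_0}+1}b)\in U_{n_c}$), a point the paper only asserts; there is a harmless off-by-one in your exponent count there (you should require $kp^{n_ce_0}+\lceil t(p^{n_ce_0}-1)\rceil\geq\lceil tp^{n_ce_0}\rceil$, reserving one factor of $c$ for membership in $\tau_b(R,\nsubseteq Q;\Delta)$), but since $k$ may be taken arbitrarily large this does not affect the argument.
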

Before proving this lemma, let us remark that if $\Delta = 0$, $R = k[x_1, \ldots, x_n]$, and $Q = 0$, then this construction gives us the usual test ideal $\tau(R, \ba^{t})$.  This is particularly easy to see, since under these hypotheses $e_0 = 1$, $\tau_b(R, \not\subseteq Q; \Delta)=R$ and $\phi_{n e_0}(F^{ne_0}_* J) = J^{[1/p^{ne_0}]}$ as explained in Proposition \ref{propInverseFrobSplitting}. Therefore, compare the previous proposition with \cite[Definition 2.9, Proposition 2.22]{BlickleMustataSmithDiscretenessAndRationalityOfFThresholds}.

\begin{proof}
 We first prove the containments hold.  Note that

\begin{align*}
\phi_{n e_0}\left(F^{n e_0}_* \ba^{\lceil t p^{n e_0} \rceil} \tau_b(R, \nsubseteq Q; \Delta)\right) &= \phi_{n e_0}\left(F^{n e_0}_* \ba^{\lceil t p^{n e_0} \rceil} \phi_{e_0} (F^{e_0}_* \tau_b(R, \nsubseteq Q; \Delta))\right)\notag\\
&= \phi_{n e_0}\left(F^{n e_0}_* \phi_{e_0}(F^{e_0}_* (\ba^{\lceil t p^{n e_0} \rceil})^{[p^{e_0}]} \tau_b(R, \nsubseteq Q; \Delta))\right)\notag\\
&\subseteq \phi_{n e_0}\left(F^{n e_0}_* \phi_{e_0}(F^{e_0}_* \ba^{\lceil t p^{(n+1)e_0} \rceil} \tau_b(R, \nsubseteq Q; \Delta))\right)\notag\\
&= \phi_{(n+1) e_0}\left(F^{(n+1)e_0}_* \ba^{\lceil t p^{(n+1)e_0} \rceil} \tau_b(R, \nsubseteq Q; \Delta)\right)\notag.
\end{align*}

We now prove the statement about the union.  We first show that
\[
\sum_m \phi_{m e_0}(F^{m e_0}_* \ba^{\lceil t p^{m e_0} \rceil}  \tau_b(R, \nsubseteq Q; \Delta)) \supseteq \tau_{b}(R,  \nsubseteq Q; \Delta, \ba^t).
\]
Since $\ba$ does not vanish on $W$,  $\sum_m \phi_{m e_0}(F^{m e_0}_* \ba^{\lceil t p^{m e_0} \rceil}  \tau_b(R, \nsubseteq Q; \Delta))$ is not contained in $Q$.  However, for a fixed $n \geq 0$,
\[
\begin{split}
 \phi_{n e_0} ( F^{n e_0}_* \ba^{\lceil t(p^{n e_0} - 1) \rceil} &\sum_m \phi_{m e_0}(F^{m e_0}_* \ba^{\lceil t p^{m e_0} \rceil}  \tau_b(R, \nsubseteq Q; \Delta)) )\\
&=\sum_m \phi_{(n + m) e_0}(F^{(n + m) e_0}_* (\ba^{\lceil t(p^{n e_0} - 1) \rceil})^{[p^{m e_0}]} \ba^{\lceil t p^{m e_0} \rceil}  \tau_b(R, \nsubseteq Q; \Delta))\\
&\subseteq \sum_m \phi_{(n + m) e_0}(F^{(n + m) e_0}_* \ba^{\lceil t(p^{(n+m)e_0} - p^{m e_0}) \rceil} \ba^{\lceil t p^{m e_0} \rceil}  \tau_b(R, \nsubseteq Q; \Delta))\\
&\subseteq \sum_m \phi_{(n + m) e_0}(F^{(n + m) e_0}_* \ba^{\lceil t p^{(n+m) e_0} \rceil}  \tau_b(R, \nsubseteq Q; \Delta))\\
&\subseteq \sum_m \phi_{m e_0}(F^{m e_0}_* \ba^{\lceil t p^{m e_0} \rceil}  \tau_b(R, \nsubseteq Q; \Delta)).
\end{split}
\]

The minimality of $\tau_{b}(R,  \nsubseteq Q; \Delta, \ba^t)$ then guarantees that
\[
\sum_m \phi_{m e_0}(F^{m e_0}_* \ba^{\lceil t p^{m e_0} \rceil}  \tau_b(R, \nsubseteq Q; \Delta)) \supseteq \tau_{b}(R,  \nsubseteq Q; \Delta, \ba^t)
\]
as desired.

Now we prove the reverse inclusion.  First choose a $c \in R \setminus Q$ as in Proposition \ref{PropositionUniformExistenceForTestElts}.  Then we can find a $c' \in R \setminus Q$ such that $cc'$ also satisfies the condition of \ref{PropositionUniformExistenceForTestElts} and such that $c' \ba^{\lceil t(p^{ne_0} - 1) \rceil} \subseteq \ba^{\lceil t p^{ne_0} \rceil}$ for all $n \geq 0$.
Thus we see, for every $k \geq 0$, that
\begin{align}
\label{EqnInteriorContainment}
\tau_{b}(R,  \nsubseteq Q; \Delta, \ba^t) &= \sum_{n \geq k} \phi_{ne_0}(F^{ne_0}_* cc' \ba^{\lceil t(p^{ne_0} - 1) \rceil})\\
& \subseteq \sum_{n \geq k} \phi_{ne_0}(F^{ne_0}_* c \ba^{\lceil t p^{ne_0} \rceil})\notag\\
& \subseteq \sum_{n \geq k} \phi_{ne_0}(F^{ne_0}_* c \ba^{\lceil t(p^{ne_0} - 1) \rceil})\notag\\
& = \tau_{b}(R,  \nsubseteq Q; \Delta, \ba^t)\notag.
\end{align}

Note also that
\[
\tau_{b}(R,  \nsubseteq Q; \Delta) = \sum_{n \geq 0} \phi_{ne_0}(F^{ne_0}_* (cR) ),
\]
for the same choice of $c$ (above $(cR)$ is the ideal generated by $c$).  That is, if $c$ satisfies Proposition \ref{PropositionUniformExistenceForTestElts} for our $\ba^t$, then $c$ also satisfies Proposition \ref{PropositionUniformExistenceForTestElts} for $\ba = R$.  Choose a finite set of generators $x_{ik}$ for $\tau_{b}(R,  \nsubseteq Q; \Delta)$ such that $x_{ik} \in \phi_{ke_0}(F^{ke_0}_* (cR) )$ (note, $k$ will vary). Now

\begin{align}
\sum_{n \geq 0} \phi_{ne_0}(F^{ne_0}_* \ba^{\lceil t p^{n e_0} \rceil} x_{ik} ) &\subseteq \sum_{n \geq 0} \phi_{ne_0}(F^{ne_0}_* \ba^{\lceil t p^{n e_0} \rceil} \phi_{ke_0}( F^{ke_0}_* (cR) ))\notag\\
&\subseteq \sum_{n \geq 0} \phi_{ne_0}(F^{ne_0}_* \phi_{ke_0}( F^{ke_0}_* c \ba^{\lceil t p^{(n+k)e_0} \rceil}))\notag\\
&=\sum_{n \geq k} \phi_{ne_0}(F^{ne_0}_* c \ba^{\lceil t p^{ne_0} \rceil})\notag\\
&=\tau_{b}(R,  \nsubseteq Q; \Delta, \ba^t)\notag.
\end{align}
Where the last equality is due to (\ref{EqnInteriorContainment}).
But then we immediately see that
\[
\tau_{b}(R,  \nsubseteq Q; \Delta, \ba^t) \supseteq \sum_{n \geq 0} \phi_{ne_0}(F^{ne_0}_* \ba^{\lceil t p^{n e_0} \rceil} \tau_{b}(R,  \nsubseteq Q; \Delta))
\]
since the $x_{ik}$ generate $\tau_{b}(R,  \nsubseteq Q; \Delta)$.
\end{proof}

\begin{remark}
Experts in tight closure theory for pairs may feel more comfortable developing a tight closure theory with a different set of permissible test elements and defining $\tau_{b}(R, \nsubseteq Q; \Delta, \ba^t)$ as something like $\Ann_R(0^{*(\nsubseteq Q; \Delta, \ba^t)}_{E_R})$.  However, this is unnecessary and no easier (although possibly more familiar) than what we did above.  Another advantage of the above approach is that it does not require any local or Matlis duality.
\end{remark}

\subsection{Jumping numbers of generalized test ideals}

In this section, we define the $F$-jumping numbers and outline several results about test ideals which will be useful in both of our approaches to their discreteness and rationality.

We begin with an observation that the test ideal is constant if we increase the exponent very slightly.  This sort of behavior was first observed for test ideals in \cite[Remark 2.12]{MustataTakagiWatanabeFThresholdsAndBernsteinSato} and justifies the notion of jumping numbers.

\begin{lemma}
\label{LemmaPositiveIncreaseOk}
Suppose that $X = \Spec R$, $\Delta \geq 0$, $\ba \subseteq R$ and $Q \in \Spec R$ satisfy the conditions of Definition \ref{DefnUltraGeneralizedTestIdeal}.  Then for every positive real number $t > 0$, there exists an $\epsilon > 0$ such that
\[
 \tau_b(X, \nsubseteq Q; \Delta, \ba^t) = \tau_b(X, \nsubseteq Q; \Delta, \ba^{s})
\]
for all $s \in [t, t+\epsilon]$.  In particular, taking $Q = (0)$, the same statement holds for the test ideal $\tau_b(X; \Delta, \ba^t)$.
\end{lemma}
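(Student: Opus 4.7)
The plan is to exploit the characterization of the test ideal given by Lemma \ref{LemmaAlternateUltraGeneralizedTestIdeal}. Writing $\tau := \tau_b(X, \nsubseteq Q; \Delta)$ and, for each real $r \geq 0$ and integer $n \geq 0$,
\[
L_n(r) := \phi_{ne_0}\bigl(F^{ne_0}_* \ba^{\lceil r p^{ne_0}\rceil} \tau\bigr),
\]
that lemma identifies $\tau_b(X, \nsubseteq Q; \Delta, \ba^r)$ with the stable value of the ascending chain $\{L_n(r)\}_n$. By Noetherianity, fix $N$ with $L_n(t) = \tau_b(X, \nsubseteq Q; \Delta, \ba^t)$ for every $n \geq N$. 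The easy direction $\tau_b(X, \nsubseteq Q; \Delta, \ba^s) \subseteq \tau_b(X, \nsubseteq Q; \Delta, \ba^t)$ is immediate from $\lceil s p^{ne_0}\rceil \geq \lceil t p^{ne_0}\rceil$ for $s \geq t$, which yields $L_n(s) \subseteq L_n(t)$ termwise.

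For the reverse inclusion, it suffices to exhibit a single $n \geq N$ and an $\epsilon > 0$ with $L_n(s) = L_n(t)$ for all $s \in [t, t+\epsilon]$, for then $\tau_b(X, \nsubseteq Q; \Delta, \ba^s) \supseteq L_n(s) = L_n(t) = \tau_b(X, \nsubseteq Q; \Delta, \ba^t)$. The obvious sufficient condition is $\lceil s p^{ne_0}\rceil = \lceil t p^{ne_0}\rceil$: whenever $t p^{ne_0} \notin \mathbb{Z}$, the choice $\epsilon := (\lceil t p^{ne_0}\rceil - t p^{ne_0})/p^{ne_0} > 0$ achieves this. Such an index $n \geq N$ exists for every $t$ outside the exceptional set $p^{-N e_0}\mathbb{Z}$, since $t p^{ne_0} \in \mathbb{Z}$ for all $n \geq N$ forces $t \in p^{-N e_0}\mathbb{Z}$.

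The main obstacle is the exceptional case $t \in p^{-N e_0}\mathbb{Z}$. Here the function $m \mapsto \phi_{ne_0}(F^{ne_0}_* \ba^m \tau)$ on $\mathbb{Z}_{\geq 0}$ is non-increasing and, by Noetherianity, attains only finitely many distinct values, so it is constant on a finite union of integer intervals. To close the argument one must show that for $n$ sufficiently large, $\lceil t p^{ne_0}\rceil$ lies strictly inside (rather than at the right endpoint of) its constancy interval; then $\lceil s p^{ne_0}\rceil$ cannot escape that interval for $s-t$ small, and again $L_n(s) = L_n(t)$. This is a Skoda-type statement that I expect to prove by combining the projection-formula identity $\phi_{ne_0}(F^{ne_0}_*(\ba^k)^{[p^{ne_0}]} \tau) = \ba^k \tau$ with the fixed-point property $\phi_{ne_0}(F^{ne_0}_* \tau) = \tau$ noted after Definition \ref{DefnUltraGeneralizedTestIdeal}, so that the widths of the constancy intervals grow with $n$ fast enough to absorb the unit-length perturbation in the exponent coming from $\lceil s p^{ne_0}\rceil - \lceil t p^{ne_0}\rceil \in \{0,1\}$; carrying this out carefully is the delicate step of the proof.
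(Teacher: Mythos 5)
Your reduction works only away from the crucial case, and the case you leave open is where the lemma has all of its content. When $t\in p^{-Ne_0}\mathbb{Z}$ (so $tp^{ne_0}\in\mathbb{Z}$ for all $n\geq N$), every $s>t$ forces $\lceil sp^{ne_0}\rceil\geq tp^{ne_0}+1$, and what you need is that $\phi_{ne_0}(F^{ne_0}_*\ba^{tp^{ne_0}}\tau)=\phi_{ne_0}(F^{ne_0}_*\ba^{tp^{ne_0}+1}\tau)$ for some large $n$, where $\tau=\tau_b(X,\nsubseteq Q;\Delta)$. This is not a routine refinement: the left-hand ideals union over $n$ to $\tau_b(X,\nsubseteq Q;\Delta,\ba^{t})$, while the right-hand ideals union to $\bigcup_{s>t}\tau_b(X,\nsubseteq Q;\Delta,\ba^{s})$, so the ``Skoda-type statement'' you expect to prove is literally the lemma itself in this case (it is true, but only because of the lemma). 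The ingredients you propose do not produce it: the projection-formula identity together with $\phi_{e_0}(F^{e_0}_*\tau)=\tau$ only show that both chains ascend in $n$ -- the containments go the wrong way to transfer constancy in the exponent from level $n$ to level $n+1$ -- and nothing in your sketch supplies the extra power of $\ba$ needed to pass from exponent $tp^{ne_0}$ to $tp^{ne_0}+1$. So as written the argument has a genuine gap exactly at the values of $t$ with $p$-power denominator.

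The paper's proof avoids any case distinction on $t$ by working with the sum description of Definition \ref{DefnAlternateAdjointTestIdeal} instead of the $\lceil tp^{ne_0}\rceil$-characterization of Lemma \ref{LemmaAlternateUltraGeneralizedTestIdeal}. Choose $c$ as in Proposition \ref{PropositionUniformExistenceForTestElts} and an auxiliary element $b\in\ba\cap(R\setminus Q)$; then $cb$ again satisfies Proposition \ref{PropositionUniformExistenceForTestElts}, so by noetherianity $\tau_b(X,\nsubseteq Q;\Delta,\ba^{t})=\sum_{n=0}^{m}\phi_{ne_0}(F^{ne_0}_*\ba^{\lceil t(p^{ne_0}-1)\rceil}cb)$ for some finite $m$. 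Trading the spare factor $b$ for one extra power of $\ba$ and setting $\epsilon=1/(p^{me_0}-1)$, one has $\lceil t(p^{ne_0}-1)\rceil+1\geq\lceil(t+\epsilon)(p^{ne_0}-1)\rceil$ for all $n\leq m$, whence $\tau_b(X,\nsubseteq Q;\Delta,\ba^{t})\subseteq\sum_{n\geq 0}\phi_{ne_0}(F^{ne_0}_*\ba^{\lceil(t+\epsilon)(p^{ne_0}-1)\rceil}c)=\tau_b(X,\nsubseteq Q;\Delta,\ba^{t+\epsilon})$, and the easy reverse containment finishes the proof. The two essential inputs are the finiteness of the sum (fixing $m$, hence a uniform $\epsilon$) and the ``one element of $\ba$ to spare'' coming from $b$; to rescue your approach you would have to import an analogue of the latter, since it cannot be extracted from the fixed-point property of $\tau_b(X,\nsubseteq Q;\Delta)$ alone.
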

\begin{proof}
Set $e := {e_0}$ where $e_0$ is as in Definition \ref{DefnUltraGeneralizedTestIdeal}.  We have assumed that $\Hom_R(F^e_* R( (p^e - 1) \Delta), R)$ is free as an $F^e_* R$-module with $\phi_e : F^e_* R \rightarrow R$ corresponding to $\Delta$.
 Choose $c$ as in Proposition \ref{PropositionUniformExistenceForTestElts} and $b \in \ba \cap (R \setminus Q)$ (note that $cb$ also satisfies the condition of Proposition \ref{PropositionUniformExistenceForTestElts}).  Furthermore, there exists an integer $m > 0$ such that
\[
 \tau_b(X, \nsubseteq Q; \Delta, \ba^t) = \sum_{n = 0}^{m} \phi_{ne}(F^{ne}_* \ba^{\lceil t (p^{ne} - 1) \rceil} cb).
\]
Set $\epsilon = {1 \over p^{me} - 1}$.  Notice that we always have $\tau_b(X, \nsubseteq Q; \Delta, \ba^{t + \epsilon}) \subseteq \tau_b(X, \nsubseteq Q; \Delta, \ba^t)$, so we will prove the reverse inclusion.  Now note that we have

\begin{align}
\tau_b(X, \nsubseteq Q; \Delta, \ba^t) &= \sum_{n = 0}^{m} \phi_{ne}(F^{ne}_* \ba^{\lceil t (p^{ne} - 1) \rceil} cb)\notag\\
&\subseteq \sum_{n = 0}^{m} \phi_{ne}(F^{ne}_* \ba^{\lceil t (p^{ne} - 1) \rceil + 1} c)\notag\\
&\subseteq \sum_{n = 0}^{m} \phi_{ne}(F^{ne}_* \ba^{\lceil (t + \epsilon) (p^{ne} - 1) \rceil} c)\notag\\
&\subseteq \sum_{n = 0}^{\infinity} \phi_{ne}(F^{ne}_* \ba^{\lceil (t + \epsilon) (p^{ne} - 1) \rceil} c)\notag\\
&=\tau_b(X, \nsubseteq Q; \Delta, \ba^{t + \epsilon})\notag.
\end{align}
which completes the proof.
\end{proof}
In the spirit of Lemma \ref{LemmaPositiveIncreaseOk}, we define
\begin{definition}
\label{DefinitionJumpingNumber}
A positive real number $\alpha$ is called an \emph{$F$-jumping number of $(X, \nsubseteq Q; \Delta, \ba)$} if it is a \emph{jumping number} of the test ideals $\tau_b(X, \nsubseteq Q; \Delta, \ba^t)$ for $t>0$. That means
\[
\tau_b(X, \nsubseteq Q; \Delta, \ba^{\alpha})\neq \tau_b(X, \nsubseteq Q; \Delta, \ba^{\alpha-\epsilon})
\]
for all $\epsilon>0$.
\end{definition}

We begin with showing some basic properties of $F$-jumping numbers, the first of which is the fact that for appropriate $e>0$, the $(p^e)\th$ multiple of an $F$-jumping number is also an $F$-jumping number, cf. \cite[Proposition 3.4]{BlickleMustataSmithDiscretenessAndRationalityOfFThresholds}.

\begin{lemma}
\label{LemmaPAlphaJumps}
Suppose that $X = \Spec R$ is an $F$-finite normal scheme of characteristic $p > 0$ with a canonical module.  Further suppose that $\Delta$ is an effective $\bQ$-divisor, $\ba \subseteq R$ an ideal, $Q \in \Spec R$ and $e_0 > 0$ is a positive integer, all of which satisfy the conditions of Definition \ref{DefnUltraGeneralizedTestIdeal}. If $\alpha$ is a jumping number for $\tau_b(X, \nsubseteq Q; \Delta, \ba^t)$ then so is $p^{e_0} \alpha$.  In particular, if $Q = (0)$, then if $\alpha$ is a jumping number of $\tau_b(X; \Delta, \ba^t)$, then so is $p^{e_0} \alpha$.
\end{lemma}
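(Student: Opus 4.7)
The plan is to establish and then exploit the following key identity: for every positive real number $t$,
\[
\phi_{e_0}\bigl(F^{e_0}_* \tau_b(X, \nsubseteq Q; \Delta, \ba^{p^{e_0} t})\bigr) = \tau_b(X, \nsubseteq Q; \Delta, \ba^t).
\]
This identity says, informally, that raising the coefficient by a factor of $p^{e_0}$ corresponds precisely to the operation of applying $\phi_{e_0}$ after pushing forward by $F^{e_0}_*$. Once this is established, the result will follow by a short contrapositive argument.

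To verify the identity, I will invoke Lemma \ref{LemmaAlternateUltraGeneralizedTestIdeal}, which expresses $\tau_b(X, \nsubseteq Q; \Delta, \ba^s)$ as the ascending union $\bigcup_{n\geq 1} \phi_{ne_0}\bigl(F^{ne_0}_* \ba^{\lceil s p^{ne_0}\rceil}\, \tau_b(R, \nsubseteq Q; \Delta)\bigr)$. Setting $s = p^{e_0} t$ and noting that $p^{e_0} t \cdot p^{ne_0} = t \cdot p^{(n+1)e_0}$, one obtains
\[
\tau_b(X, \nsubseteq Q; \Delta, \ba^{p^{e_0} t}) = \bigcup_{n \geq 1} \phi_{ne_0}\bigl(F^{ne_0}_* \ba^{\lceil t p^{(n+1)e_0}\rceil}\, \tau_b(R, \nsubseteq Q; \Delta)\bigr).
\]
Applying $\phi_{e_0}(F^{e_0}_* \,\cdot\,)$ commutes with the union (the chain stabilizes in the noetherian ring $R$) and composes with $\phi_{ne_0}$ to give $\phi_{(n+1)e_0}\bigl(F^{(n+1)e_0}_* \,\cdot\,\bigr)$, reindexing $m = n+1 \geq 2$. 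Since the chain in Lemma \ref{LemmaAlternateUltraGeneralizedTestIdeal} is ascending, the union starting at $m \geq 2$ coincides with the union starting at $m \geq 1$, which equals $\tau_b(X, \nsubseteq Q; \Delta, \ba^t)$.

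With the identity in hand, I will argue by contrapositive. Suppose $p^{e_0}\alpha$ is \emph{not} a jumping number, i.e.\ there exists $\delta > 0$ with $\tau_b(X, \nsubseteq Q; \Delta, \ba^{p^{e_0}\alpha}) = \tau_b(X, \nsubseteq Q; \Delta, \ba^{p^{e_0}\alpha - \delta})$. Setting $\epsilon = \delta/p^{e_0}$, we have $p^{e_0}\alpha - \delta = p^{e_0}(\alpha - \epsilon)$. Applying $\phi_{e_0}(F^{e_0}_* \,\cdot\,)$ to both sides and invoking the key identity twice yields
\[
\tau_b(X, \nsubseteq Q; \Delta, \ba^{\alpha}) = \tau_b(X, \nsubseteq Q; \Delta, \ba^{\alpha - \epsilon}),
\]
contradicting $\alpha$ being a jumping number. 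This is precisely the contrapositive of the claim. The special case $Q=(0)$ is immediate since $\tau_b(X; \Delta, \ba^t) = \tau_b(X, \nsubseteq (0); \Delta, \ba^t)$ in that setting.

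The main technical point requiring care is the verification of the key identity, specifically making sure the index shift and the ascending chain property from Lemma \ref{LemmaAlternateUltraGeneralizedTestIdeal} interact correctly so that $\phi_{e_0}(F^{e_0}_*\,\cdot\,)$ truly reverses the operation $t \mapsto p^{e_0} t$; everything else is formal.
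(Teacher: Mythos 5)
Your proposal is correct and follows essentially the same route as the paper: both proofs rest on the description of $\tau_b(X,\nsubseteq Q;\Delta,\ba^s)$ from Lemma \ref{LemmaAlternateUltraGeneralizedTestIdeal}, the observation $\lceil p^{e_0}t\cdot p^{ne_0}\rceil=\lceil t\,p^{(n+1)e_0}\rceil$, and an application of $\phi_{e_0}(F^{e_0}_*\,\cdot\,)$ to convert an equality at level $p^{e_0}\alpha$ into one at level $\alpha$, yielding the contrapositive/contradiction. The only difference is cosmetic (you package the computation as the standalone identity $\phi_{e_0}(F^{e_0}_*\tau_b(X,\nsubseteq Q;\Delta,\ba^{p^{e_0}t}))=\tau_b(X,\nsubseteq Q;\Delta,\ba^{t})$, and one should shrink $\delta$ so that $\alpha-\epsilon>0$, exactly as the paper's ``sufficiently small $\epsilon$'' does).
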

\begin{proof}
Set $e := {e_0}$ where $e_0$ is as in Definition \ref{DefnUltraGeneralizedTestIdeal}.  The hypotheses of Definition \ref{DefnUltraGeneralizedTestIdeal} imply that $\Hom_R(F^e_* R( (p^e - 1) \Delta), R)$ is free as an $F^e_* R$-module and so we may choose $\phi_e : F^e_* \O_X \rightarrow \O_X$ corresponding to $\Delta$.
 As in Lemma \ref{LemmaAlternateUltraGeneralizedTestIdeal},
\[
\tau_b(X, \nsubseteq Q; \Delta, \ba^{\alpha}) =  \sum_{n \geq n_0} \phi_{ne}(F^{ne}_* \ba^{\lceil p^{ne} \alpha\rceil} \tau_b(X, \nsubseteq Q; \Delta))
\]
for any fixed $n_0 > 0$.
Therefore, if $p^e \alpha$ is not a jumping number, we have that
\[
 \tau_b(X, \nsubseteq Q; \Delta, \ba^{p^e \alpha}) = \tau_b(X, \nsubseteq Q; \Delta, \ba^{p^e \alpha - p^e \epsilon})
\]
for all sufficiently small $\epsilon > 0$.
This implies that
\[
 \sum_{n \geq 0} \phi_{ne}(F^{ne}_* \ba^{\lceil p^{(n+1)e} \alpha \rceil} \tau_b(X, \nsubseteq Q; \Delta)) = \sum_{n \geq 0} \phi_{ne}(F^{ne}_* \ba^{\lceil p^{(n+1)e } (\alpha - \epsilon)\rceil} \tau_b(X, \nsubseteq Q; \Delta))
\]
for all $\epsilon > 0$ sufficiently small.  Applying $F^e_*$ and $\phi_e$, we obtain that
\[
 \sum_{n \geq 0} \phi_{(n+1)e}(F^{(n+1)e}_* \ba^{\lceil p^{(n+1)e} \alpha \rceil } \tau_b(X, \nsubseteq Q; \Delta)) = \sum_{n \geq 0} \phi_{(n+1)e}(F^{(n+1)e}_* \ba^{\lceil p^{(n+1)e} (\alpha - \epsilon) \rceil} \tau_b(X, \nsubseteq Q; \Delta))
\]
for all $\epsilon > 0$ sufficiently small.  Therefore,
\[
\tau_b(X, \nsubseteq Q; \Delta, \ba^{\alpha}) = \tau_b(X, \nsubseteq Q; \Delta, \ba^{\alpha - \epsilon})
\]
for all sufficiently small $\epsilon > 0$.
But that is a contradiction.
\end{proof}

We also recall a Skoda-type theorem for test ideals.

\begin{lemma}
\label{LemmaSkodaTypeTheorem}
Suppose that $X = \Spec R$, $\Delta \geq 0$, $\ba \subseteq R$ and $Q \in \Spec R$ satisfy the conditions of Definition \ref{DefnUltraGeneralizedTestIdeal}.  Further suppose that $\ba$ is generated by $l$ elements.  Then for every $\alpha \geq l$, we have
\[
 \tau_b(X, \nsubseteq Q; \Delta, \ba^t) = \ba \tau_b(X, \nsubseteq Q; \Delta, \ba^{t-1}) .
\]
In particular, by assuming $Q = (0)$, the same statement holds for the test ideal $\tau_b(X; \Delta, \ba^t)$.
\end{lemma}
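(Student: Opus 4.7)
The plan is to work directly with the characterization of the test ideal given in Lemma \ref{LemmaAlternateUltraGeneralizedTestIdeal}, namely
\[
\tau_b(X, \nsubseteq Q; \Delta, \ba^t) = \bigcup_{n \geq 0} \phi_{ne_0}\!\left(F^{ne_0}_*\, \ba^{\lceil t p^{ne_0}\rceil}\, \tau_b(R, \nsubseteq Q; \Delta)\right),
\]
and then exploit the standard pigeonhole observation for ideals generated by $l$ elements: if $\ba = (a_1, \ldots, a_l)$ and $m \geq l q$ for some $q = p^{ne_0}$, then any monomial generator $a_1^{k_1}\cdots a_l^{k_l}$ of $\ba^m$ has some $k_i \geq q$, whence $\ba^m \subseteq \ba^{[q]} \cdot \ba^{m - q}$. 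Combined with the $\O_X$-linearity of $\phi_{ne_0}$ (i.e.\ the projection formula $\phi_{ne_0}(F^{ne_0}_* r^{p^{ne_0}} s) = r\,\phi_{ne_0}(F^{ne_0}_* s)$), this should let me move a factor of $\ba$ in and out of $\phi_{ne_0}$ at the cost of shifting the exponent by $p^{ne_0}$.

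For the containment $\tau_b(X, \nsubseteq Q; \Delta, \ba^\alpha) \subseteq \ba\,\tau_b(X, \nsubseteq Q; \Delta, \ba^{\alpha - 1})$, I first observe that $\alpha \geq l$ forces $\lceil \alpha p^{ne_0}\rceil \geq l p^{ne_0}$ for every $n$. Applying the pigeonhole inclusion $\ba^{\lceil \alpha p^{ne_0}\rceil} \subseteq \ba^{[p^{ne_0}]}\,\ba^{\lceil \alpha p^{ne_0}\rceil - p^{ne_0}} = \ba^{[p^{ne_0}]}\,\ba^{\lceil (\alpha-1)p^{ne_0}\rceil}$ and then the projection formula yields
\[
\phi_{ne_0}\!\left(F^{ne_0}_*\ba^{\lceil \alpha p^{ne_0}\rceil}\,\tau_b(R,\nsubseteq Q;\Delta)\right)\subseteq \ba\cdot \phi_{ne_0}\!\left(F^{ne_0}_*\ba^{\lceil (\alpha-1)p^{ne_0}\rceil}\,\tau_b(R,\nsubseteq Q;\Delta)\right),
\]
and the right-hand side lies in $\ba\,\tau_b(X, \nsubseteq Q; \Delta, \ba^{\alpha-1})$. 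Taking the union over $n$ gives the desired inclusion.

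For the reverse inclusion $\ba\,\tau_b(X, \nsubseteq Q; \Delta, \ba^{\alpha - 1}) \subseteq \tau_b(X, \nsubseteq Q; \Delta, \ba^\alpha)$, which does not require the hypothesis $\alpha\geq l$, the same projection formula reads
\[
\ba\cdot \phi_{ne_0}\!\left(F^{ne_0}_*\ba^{\lceil (\alpha-1)p^{ne_0}\rceil}\,\tau_b(R,\nsubseteq Q;\Delta)\right)=\phi_{ne_0}\!\left(F^{ne_0}_*\ba^{[p^{ne_0}]}\,\ba^{\lceil (\alpha-1)p^{ne_0}\rceil}\,\tau_b(R,\nsubseteq Q;\Delta)\right),
\]
and the inclusion $\ba^{[p^{ne_0}]} \subseteq \ba^{p^{ne_0}}$ together with $p^{ne_0} + \lceil (\alpha-1)p^{ne_0}\rceil = \lceil \alpha p^{ne_0}\rceil$ puts this inside $\phi_{ne_0}(F^{ne_0}_*\ba^{\lceil \alpha p^{ne_0}\rceil}\,\tau_b(R,\nsubseteq Q;\Delta)) \subseteq \tau_b(X, \nsubseteq Q; \Delta, \ba^\alpha)$. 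Uniting over $n$ completes the proof.

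The argument is essentially bookkeeping: the substantive inputs are the alternate characterization (Lemma \ref{LemmaAlternateUltraGeneralizedTestIdeal}), the projection formula, and the pigeonhole fact $\ba^{lq} \subseteq \ba^{[q]}\ba^{(l-1)q}$ for $\ba$ $l$-generated. No new obstacle appears; the only point to be attentive to is the ceiling arithmetic $\lceil\alpha p^{ne_0}\rceil - p^{ne_0} = \lceil(\alpha-1)p^{ne_0}\rceil$ and the need that $\ba\,\tau_b(\ldots, \ba^{\alpha-1})$ not vanish, which is automatic because $\tau_b(\ldots, \ba^{\alpha-1})\not\subseteq Q$ and $\ba$ does not vanish on $W$ (so $\ba \not\subseteq Q$).
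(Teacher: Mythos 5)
Your proof is correct and follows essentially the same route as the paper's: both rest on the characterization of $\tau_b(X,\nsubseteq Q;\Delta,\ba^t)$ from Lemma \ref{LemmaAlternateUltraGeneralizedTestIdeal}, the pigeonhole identity $\ba^{\lceil t p^{ne_0}\rceil}=\ba^{\lceil (t-1)p^{ne_0}\rceil}\ba^{[p^{ne_0}]}$ for an $l$-generated ideal with $t\geq l$, and the $p^{-ne_0}$-linearity of $\phi_{ne_0}$ to pull a copy of $\ba$ across the map. The only cosmetic difference is that you split the equality into two separate inclusions (and take unions over $n$) where the paper writes a single chain of equalities after stabilizing at one index $m$.
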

\begin{proof}
 The proof is essentially the same as \cite[Theorem 4.2]{HaraTakagiOnAGeneralizationOfTestIdeals}.  Set $e := {e_0}$ where $e_0$ is as in Definition \ref{DefnUltraGeneralizedTestIdeal}.  We assume that $\phi_e : F^e_* \O_X \rightarrow \O_X$ corresponds to $\Delta$.  We then note the following equality,
\[
 \ba^{\lceil p^{ne} t \rceil} = \ba^{\lceil p^{ne} (t - l) \rceil} \ba^{p^{ne} l}  = \ba^{\lceil p^{ne} (t - l) \rceil} \ba^{p^{ne} (l-1)}  \ba^{[p^{ne}]}  = \ba^{\lceil p^{ne}(t-1) \rceil} \ba^{[p^{ne}]}.
\]
Then for some $m > 0$ we have both

\begin{align*}
\tau_b(X, \nsubseteq Q; \Delta, \ba^t) &= \sum_{n = 1}^{m} \phi_{ne}(F^{ne}_* \ba^{\lceil t p^{ne} \rceil} \tau_b(X, \nsubseteq Q; \Delta)), \text{ and}\\
\tau_b(X, \nsubseteq Q; \Delta, \ba^{t-1}) &= \sum_{n = 1}^{m} \phi_{ne}(F^{ne}_* \ba^{\lceil (t-1) p^{ne} \rceil} \tau_b(X, \nsubseteq Q; \Delta)).
\end{align*}
However,
\[
\begin{split}
 \tau_b(X, \nsubseteq Q; \Delta, \ba^t) &= \sum_{n = 1}^{m} \phi_{ne}(F^{ne}_* \ba^{\lceil t p^{ne} \rceil} \tau_b(X, \nsubseteq Q; \Delta)) \\
 &= \sum_{n = 1}^{m} \phi_{ne}(F^{ne}_* \ba^{\lceil (t - 1) p^{ne} \rceil} \ba^{[p^{ne}]} \tau_b(X, \nsubseteq Q; \Delta)) \\
 &= \ba \sum_{n = 1}^{m} \phi_{ne}(F^{ne}_* \ba^{\lceil (t-1) p^{ne} \rceil} \tau_b(X, \nsubseteq Q; \Delta))\\ \
 &= \ba \tau_b(X, \nsubseteq Q; \Delta, \ba^{t-1})
 \end{split}
\]
as desired.
\end{proof}

\begin{corollary}
\label{CorOfSkodaTypeTheoremForJumping}
Suppose that $X = \Spec R$, $\Delta \geq 0$, $\ba \subseteq R$ and $Q \in \Spec R$ satisfy the conditions of Definition \ref{DefnUltraGeneralizedTestIdeal}.  Suppose that $\ba$ is generated by $l$ elements and that $\alpha > l$ is an $F$-jumping number for $\tau_b(X \nsubseteq Q; \Delta, \ba^{t})$, then so is $\alpha - 1$.
\end{corollary}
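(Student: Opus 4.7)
The plan is to apply the Skoda-type statement of Lemma \ref{LemmaSkodaTypeTheorem} at both $\alpha$ and $\alpha - \epsilon$ (for sufficiently small $\epsilon > 0$), and then derive the conclusion by contradiction. Since $\alpha > l$, we may choose $\epsilon_0 > 0$ small enough that $\alpha - \epsilon \geq l$ for every $0 < \epsilon \leq \epsilon_0$. In particular, Lemma \ref{LemmaSkodaTypeTheorem} gives us the two identities
\[
    \tau_b(X, \nsubseteq Q; \Delta, \ba^{\alpha}) = \ba\,\tau_b(X, \nsubseteq Q; \Delta, \ba^{\alpha-1})
\]
and
\[
    \tau_b(X, \nsubseteq Q; \Delta, \ba^{\alpha-\epsilon}) = \ba\,\tau_b(X, \nsubseteq Q; \Delta, \ba^{\alpha-1-\epsilon})
\]
for all $0 < \epsilon \leq \epsilon_0$.

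Now suppose, for contradiction, that $\alpha - 1$ is not an $F$-jumping number. By definition there exists some $\epsilon$ with $0 < \epsilon \leq \epsilon_0$ such that
\[
    \tau_b(X, \nsubseteq Q; \Delta, \ba^{\alpha-1}) = \tau_b(X, \nsubseteq Q; \Delta, \ba^{\alpha-1-\epsilon}).
\]
Multiplying this equality by $\ba$ and combining with the two Skoda identities above yields
\[
    \tau_b(X, \nsubseteq Q; \Delta, \ba^{\alpha}) = \tau_b(X, \nsubseteq Q; \Delta, \ba^{\alpha-\epsilon}),
\]
which contradicts the hypothesis that $\alpha$ is an $F$-jumping number for $\tau_b(X, \nsubseteq Q; \Delta, \ba^{t})$.

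There is essentially no obstacle: the argument is a direct formal consequence of Lemma \ref{LemmaSkodaTypeTheorem}, and the only small point to verify is that $\epsilon$ can be taken in the allowed range so that the Skoda relation is valid at $\alpha - \epsilon$ — this is ensured by the strict inequality $\alpha > l$. The same style of deduction was used in \cite{HaraTakagiOnAGeneralizationOfTestIdeals} in the classical test-ideal case, and transfers verbatim to our setting because Lemma \ref{LemmaSkodaTypeTheorem} was proved in precisely the generality we need.
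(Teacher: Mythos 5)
Your proof is correct and is exactly the deduction the paper has in mind: the corollary is stated as an immediate consequence of the Skoda-type Lemma \ref{LemmaSkodaTypeTheorem} (with no separate proof given), and your contrapositive argument, multiplying the assumed equality at $\alpha-1$ by $\ba$ and invoking Skoda at $\alpha$ and $\alpha-\epsilon$, is the intended one. The only implicit step worth noting is that shrinking $\epsilon$ into the range $(0,\epsilon_0]$ uses the monotonicity of $t \mapsto \tau_b(X, \nsubseteq Q; \Delta, \ba^t)$, which is standard.
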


We end this section by noticing that, if we can prove discreteness and rationality of $F$-jumping numbers on an affine cover of a scheme, then discreteness and rationality also hold on the original scheme.

\begin{proposition}
\label{PropPassToAffineCover}
 Suppose that $(X, \Delta, \ba^t)$ is a triple and that $\{U_i = \Spec R_i\}$ is a finite affine cover of $X$.  Suppose that for each $i$, the $F$-jumping numbers of $\tau_b(U_i; \Delta|_{U_i}, \ba^t)$ are discrete and rational.  Then the $F$-jumping numbers of $\tau_b(X; \Delta, \ba^t)$ are also discrete and rational.
\end{proposition}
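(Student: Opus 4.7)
The plan is to reduce the question to the affine pieces via the fact that the big test ideal commutes with localization, and then to show that the $F$-jumping numbers on $X$ are exactly the union of the $F$-jumping numbers on the $U_i$; since the cover is finite this union inherits discreteness and rationality.

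First, I would invoke Remark \ref{RemBigTestEltsExist} (using the characterization in Definition-Proposition \ref{DefnPropBigTestIdeals}~(4), together with the existence of big sharp test elements that remain so after localization) to conclude that $\tau_b(X; \Delta, \ba^t)|_{U_i} = \tau_b(U_i; \Delta|_{U_i}, (\ba|_{U_i})^t)$ for every $i$ and every $t > 0$. In particular, since equality of two coherent ideal sheaves on $X$ can be checked on the finite open cover $\{U_i\}$, we have
\[
\tau_b(X;\Delta,\ba^{\alpha}) = \tau_b(X;\Delta,\ba^{\alpha-\epsilon}) \quad \Longleftrightarrow \quad \tau_b(U_i;\Delta|_{U_i},\ba^{\alpha}) = \tau_b(U_i;\Delta|_{U_i},\ba^{\alpha-\epsilon}) \text{ for every } i.
\]

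Next I would prove that $\alpha$ is an $F$-jumping number of $(X;\Delta,\ba)$ if and only if $\alpha$ is an $F$-jumping number of $(U_i;\Delta|_{U_i},\ba|_{U_i})$ for some $i$. The ``if'' direction follows immediately from the displayed biconditional. For the ``only if'' direction, suppose $\alpha$ is a jumping number on $X$; then for every $\epsilon > 0$ at least one index $i=i(\epsilon)\in\{1,\dots,N\}$ witnesses $\tau_b(U_{i(\epsilon)};\Delta|_{U_{i(\epsilon)}},\ba^{\alpha})\subsetneq \tau_b(U_{i(\epsilon)};\Delta|_{U_{i(\epsilon)}},\ba^{\alpha-\epsilon})$. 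Since the cover is finite, by pigeonhole some fixed index $i_0$ occurs for a sequence $\epsilon_n \searrow 0$. For an arbitrary $\epsilon' > 0$, choose $\epsilon_n < \epsilon'$ so that
\[
\tau_b(U_{i_0};\Delta|_{U_{i_0}},\ba^{\alpha-\epsilon'}) \supseteq \tau_b(U_{i_0};\Delta|_{U_{i_0}},\ba^{\alpha-\epsilon_n}) \supsetneq \tau_b(U_{i_0};\Delta|_{U_{i_0}},\ba^{\alpha}),
\]
confirming that $\alpha$ is an $F$-jumping number for $(U_{i_0};\Delta|_{U_{i_0}},\ba|_{U_{i_0}})$.

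Finally, the set of $F$-jumping numbers of $(X;\Delta,\ba)$ is the union $\bigcup_{i=1}^{N}\mathcal{J}_i$, where $\mathcal{J}_i$ is the set of $F$-jumping numbers of $(U_i;\Delta|_{U_i},\ba|_{U_i})$. By hypothesis each $\mathcal{J}_i\subseteq \bQ$ and is discrete (locally finite) in $\bR$. A finite union of discrete subsets of $\bR$ is again discrete, and a finite union of sets of rational numbers consists of rational numbers. This proves the discreteness and rationality of the $F$-jumping numbers of $(X;\Delta,\ba)$.

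The only nontrivial point is the pigeonhole step in the ``only if'' direction, which is why finiteness of the cover is essential; the rest is a formal consequence of the localization property of $\tau_b$. I expect the whole argument to be short.
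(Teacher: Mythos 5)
Your proof is correct and follows essentially the same route as the paper's: reduce via the localization property of $\tau_b$ to the affine pieces, note that a jumping number on $X$ must be a jumping number on some $U_i$ (hence rational), and use finiteness of the cover to rule out accumulation points. The only difference is that you spell out the pigeonhole/monotonicity step and the restriction compatibility that the paper treats as immediate, which is fine.
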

\begin{proof}
 Suppose that $t$ is an $F$-jumping number for $(X, \Delta, \ba^t)$.  That is, $\tau_b(X; \Delta, \ba^t) \neq \tau_b(X; \Delta, \ba^{t - \epsilon})$ for any $\epsilon > 0$.  This immediately implies that $t$ is also an $F$-jumping number on one of the $U_i$.  In particular, $t$ is a rational number.  On the other hand, if $\{t_j \}$ is a collection of $F$-jumping numbers with an accumulation point $t$, then $t$ is also an accumulation point of $F$-jumping numbers on one of the $U_i$ (since there are only finitely many $U_i$), which is impossible by assumption.
\end{proof}

\section{Discreteness and rationality for {$k$}-algebras}
\label{SectionDiscRatKalg}

In this section we show that the set of $F$-jumping numbers of a triple $(R, \Delta, \ba)$ is a discrete set of rational numbers in the case that \emph{$R$ is essentially of finite type over an $F$-finite field} $k$ and $K_R + \Delta$ is $\bQ$-Cartier with index not divisible by $p > 0$. We use the methods described in Section \ref{SectionGenTest} -- in particular Theorem \ref{ThmRestrictionForTestIdeals} -- to reduce to the case $R=k[x_1,\ldots,x_n]$ and then to prove discreteness and rationality for the variant of the test ideal introduced in Section \ref{SectionRestrictionOfTestIdeals}. In the polynomial ring case we can then employ the same strategy as in \cite{BlickleMustataSmithDiscretenessAndRationalityOfFThresholds} by universally bounding the degrees of the generators of our variant of the test ideal in question.

\begin{theorem}
\label{ThmMainNonPrincipalCase}
Suppose that $(X, \Delta, \ba)$ is a triple where $X$ is normal and \emph{essentially of finite type over an $F$-finite field} of characteristic $p > 0$, and $K_X + \Delta$ is $\bQ$-Cartier with index not divisible by $p$. Then the set of jumping numbers of $\tau_b(X; \Delta, \ba^t)$ is a discrete set of rational numbers.
\end{theorem}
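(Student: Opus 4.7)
The plan is to reduce from the log $\bQ$-Gorenstein setting on $X$ to a question about a variant test ideal on an affine polynomial ring, and then to adapt the degree-bounding argument of \cite{BlickleMustataSmithDiscretenessAndRationalityOfFThresholds} to that variant. First, by Proposition \ref{PropPassToAffineCover} I may assume $X = \Spec R$ is affine. Since $R$ is essentially of finite type over an $F$-finite field $k$, I can realize $R$ (possibly after localization) as a quotient of $S := k[x_1, \ldots, x_n]$. The hypothesis that $K_X + \Delta$ is $\bQ$-Cartier with index not divisible by $p$ allows me to invoke Theorem \ref{TheoremFedderFadjunction}: there is an effective $\bQ$-divisor $\Delta_S$ on $Y := \Spec S$ such that $X$ is a center of $F$-purity for $(Y, \Delta_S)$ and $\Delta_S$ restricts to $\Delta$ in the sense of Theorem \ref{TheoremFAdjunction}. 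Let $Q \subset S$ be the prime ideal defining $X$ and let $\ba' \subset S$ be any lift of $\ba$. Then Theorem \ref{ThmRestrictionForTestIdeals} yields
\[
    \tau_b(Y, \nsubseteq Q;\, \Delta_S, \ba'^{\,t})\big|_X \;=\; \tau_b(X;\, \Delta, \ba^t)
\]
for all $t > 0$, so every $F$-jumping number of $\tau_b(X;\Delta,\ba^t)$ is an $F$-jumping number of $\tau_b(Y, \nsubseteq Q;\Delta_S,\ba'^{\,t})$. It therefore suffices to establish discreteness and rationality for the latter.

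For this reduced question on the polynomial ring, Lemma \ref{LemmaAlternateUltraGeneralizedTestIdeal} describes $\tau_b(Y, \nsubseteq Q; \Delta_S, \ba'^{\,t})$ as the union over $n \ge 0$ of the ideals
\[
    \phi_{n e_0}\bigl(F^{n e_0}_*\,\ba'^{\lceil t p^{n e_0}\rceil}\,J_0\bigr),
\]
where $J_0 := \tau_b(Y, \nsubseteq Q; \Delta_S)$ is a fixed ideal not contained in $Q$ and $\phi_{e_0}$ is the map associated to $\Delta_S$ via \eqref{EqnGlobalBijection}. Using the explicit monomial formula for the canonical trace on $k[x_1,\ldots,x_n]$ recorded in Remark \ref{remarkCartierMap}, I would adapt the estimates of \cite[Proposition 3.1]{BlickleMustataSmithDiscretenessAndRationalityOfFThresholds} to show that the generators of this ideal may be chosen to have degrees bounded, uniformly in $n$, by a linear function of $t$ and the degrees of the generators of $\ba'$, $J_0$, and the polynomial multiplier encoding $\Delta_S$. (Heuristically, $\ba'^{\lceil t p^{n e_0}\rceil}$ has generators of degree $\sim t p^{n e_0}$, and applying $\phi_{n e_0}$ divides degrees by $p^{n e_0}$, so the result is $O(t)$.) Hence as $t$ varies in any compact interval $[0, C]$, the test ideal $\tau_b(Y, \nsubseteq Q; \Delta_S, \ba'^{\,t})$ lives in a fixed finite-dimensional $k$-subspace of $S$, forcing only finitely many distinct ideals, and hence only finitely many $F$-jumping numbers in $[0, C]$.

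Rationality then follows by a standard argument: Lemma \ref{LemmaPAlphaJumps} shows $\alpha \mapsto p^{e_0}\alpha$ sends $F$-jumping numbers to $F$-jumping numbers, and Corollary \ref{CorOfSkodaTypeTheoremForJumping} shows that $\alpha - 1$ is an $F$-jumping number whenever $\alpha > l$ is one (with $l$ the number of generators of $\ba'$). An irrational $\alpha$ would therefore produce an infinite subset $\{p^{e_0 m}\alpha - k : m, k \in \bN\}$ of $F$-jumping numbers accumulating in the compact interval $[0, l]$, contradicting the local finiteness just established. The main obstacle will be the degree-bounding step: unlike the smooth polynomial case of \cite{BlickleMustataSmithDiscretenessAndRationalityOfFThresholds}, the map $\phi_{e_0}$ here is the canonical trace twisted by a fixed polynomial expressing $\Delta_S$, and iterating this map via Remark \ref{RemarkDefinitionOfComposedPInverseLinear} introduces a new twisting factor at each step whose degree is proportional to $p^{n e_0}$. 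Verifying that these twists do not inflate the degree bounds when one divides by $p^{n e_0}$, and that the images continue to avoid $Q$ (so one really computes in the variant $\tau_b(\cdot, \nsubseteq Q; \cdot)$ rather than picking up the ordinary test ideal of $S$), is the technical heart of the argument.
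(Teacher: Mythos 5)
Your proposal follows essentially the same route as the paper's proof: reduce via Proposition \ref{PropPassToAffineCover} and $F$-adjunction (Theorems \ref{TheoremFedderFadjunction} and \ref{ThmRestrictionForTestIdeals}) to the variant test ideal $\tau_b(Y,\nsubseteq Q;\Delta_S,\bb^t)$ on a polynomial ring, then get discreteness from a uniform degree bound in the style of \cite{BlickleMustataSmithDiscretenessAndRationalityOfFThresholds} and rationality from Lemma \ref{LemmaPAlphaJumps} together with Corollary \ref{CorOfSkodaTypeTheoremForJumping}. The two points you leave open are exactly the ones the paper supplies: the uniform degree estimate is Lemma \ref{LemmaDegreeBoundCartierLinea} combined with Proposition \ref{PropBoundOnDegreeForTestIdealVariant} (your heuristic ``degree divided by $p^{ne_0}$ plus a constant from the twist'' is precisely its statement), and the passage from essentially of finite type to an honest quotient of a polynomial ring is not just a localization remark but the spreading-out argument of Lemma \ref{LemFiniteTypeImpliesEssentFiniteType}, where $\Delta$ is extended to a finite type model on which $K+\Delta$ stays $\bQ$-Cartier with index prime to $p$.
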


The proof of this theorem will follow after a series of reduction steps have been carried out. In the course of this we may and will assume -- justified by Proposition \ref{PropPassToAffineCover} --  that $\O_X((p^e - 1)(K_X + \Delta))$ is free of rank one.  The following lemma shows that one can reduce to the finite type case, the one afterwards covers the crucial reduction to the polynomial ring case.

\begin{lemma}
\label{LemFiniteTypeImpliesEssentFiniteType}
Suppose that the set of $F$-jumping numbers for triples $(R; \Delta, \ba)$ is discrete and rational in the case that $R$ is normal and of \emph{finite type} over an $F$-finite field $k$, and $K_R+\Delta$ is $\bQ$-Cartier with index not divisible by $p > 0$.  Then the set of $F$-jumping numbers is discrete and rational for triples $(R; \Delta, \ba^t)$ with $R$ only \emph{essentially of finite type} over $k$ (but otherwise satisfying the same assumptions as above).
\end{lemma}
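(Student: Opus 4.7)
The plan is to realize $R$ as a localization $W^{-1}S$ of a suitable finite type normal $k$-algebra $S$, extend $\Delta$ and $\ba$ to $\Spec S$, and exploit the fact that big test ideals commute with localization (Remark \ref{RemBigTestEltsExist}) to transfer discreteness and rationality from $S$ down to $R$.

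First I would write $R = W^{-1}S_0$ for some finite type $k$-algebra $S_0$ and multiplicative set $W \subseteq S_0$. Let $\Delta_{S_0}$ be the $\bQ$-divisor on $\Spec S_0$ obtained by taking the Zariski closure of each prime component of $\Delta$, and set $\ba_{S_0} = \ba \cap S_0$, so that $W^{-1}\ba_{S_0} = \ba$. Fix $e_0$ so that $(p^{e_0}-1)(K_R + \Delta)$ is Cartier. The non-normal locus of $\Spec S_0$, and the locus where $(p^{e_0}-1)(K_{S_0} + \Delta_{S_0})$ fails to be Cartier, are two closed subsets of $\Spec S_0$. Since $R$ itself is normal and $(p^{e_0}-1)(K_R+\Delta)$ is Cartier, both of these closed sets are disjoint from $\Spec R$, meaning every prime in them meets $W$. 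By noetherianity each of them has finitely many irreducible components; picking a witness $w_i \in W$ contained in each generic point and setting $f$ to be the product of these witnesses, $S := (S_0)_f$ is a normal finite type $k$-algebra such that $K_S + \Delta_S$ is $\bQ$-Cartier with index not divisible by $p$, while $R = W^{-1}S$ is unchanged.

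Second, by the localization property of big test ideals (Remark \ref{RemBigTestEltsExist}), one has
\[
    \tau_b(X; \Delta, \ba^t) \;=\; W^{-1}\tau_b(\Spec S; \Delta_S, \ba_S^t)
\]
for every $t > 0$. If $\alpha$ is an $F$-jumping number of $(R;\Delta,\ba)$, then $\tau_b(R;\Delta,\ba^{\alpha-\epsilon}) \supsetneq \tau_b(R;\Delta,\ba^\alpha)$ for every $\epsilon > 0$. Since localization is exact, the corresponding ideals on $S$ must also differ strictly for every $\epsilon > 0$, so $\alpha$ is likewise an $F$-jumping number of $(\Spec S;\Delta_S,\ba_S)$. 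Thus the $F$-jumping numbers of $(R;\Delta,\ba)$ sit inside those of $(\Spec S;\Delta_S,\ba_S)$. By hypothesis the latter set is discrete and rational, and any subset of a discrete set of rational numbers is itself discrete and rational, which yields the conclusion.

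The main obstacle is the very first step: producing a single element $f \in W$ that simultaneously cuts out an open of $\Spec S_0$ on which $S_0$ is normal \emph{and} $(p^{e_0}-1)(K_{S_0}+\Delta_{S_0})$ is Cartier. The generic-point-of-each-irreducible-component argument succeeds precisely because both loci are closed and noetherian, each component contributes a single witness in $W$, and multiplicativity of $W$ lets us pool these witnesses into one. Once this geometric reduction is in place, the localization identity and the subset observation make the transfer of discreteness and rationality essentially formal.
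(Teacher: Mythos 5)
Your proposal is correct, and it reaches the conclusion by a mildly different route than the paper. The paper argues pointwise: for each $Q \in \Spec R$ it shrinks the finite type model $\Spec A$ to a basic open $U_Q = \Spec A_{f_Q}$ that is normal and on which the extended divisor makes $K + \Delta$ $\bQ$-Cartier with index prime to $p$, observes that discreteness and rationality on $U_Q$ pass to the further localization $V_Q = \Spec R_{f_Q}$, and then glues the finitely many $V_Q$ covering $\Spec R$ via Proposition \ref{PropPassToAffineCover}. You instead build a single good model: the complement of the good locus is closed and meets every prime lying under it in $W$, so one witness $w_i \in \frp_i \cap W$ per irreducible component, multiplied into a single $f \in W$, yields $S = (S_0)_f$ normal with $(p^{e_0}-1)(K_S + \Delta_S)$ Cartier and $R = W^{-1}S$; then $\tau_b(R;\Delta,\ba^t) = W^{-1}\tau_b(S;\Delta_S,\ba_S^t)$ (Remark \ref{RemBigTestEltsExist}) shows the $F$-jumping numbers of $(R;\Delta,\ba)$ are contained in those of $(S;\Delta_S,\ba_S)$ --- which is precisely the transfer the paper leaves implicit in ``and thus also on each of the $V_Q$.'' Your version avoids the covering-and-gluing step and gives an honest containment of jumping-number sets, at the cost of the generic-point/witness argument. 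Two small points you should make explicit: the ``non-Cartier locus'' only makes sense on the normal locus of $\Spec S_0$ (off it there is no canonical divisor), so the bad set should be defined as the complement of the open locus where $S_0$ is normal and $(p^{e_0}-1)(K + \Delta_{S_0})$ is Cartier, whose openness uses that the normal locus of a finite type $k$-algebra is open and that the locus where the reflexive sheaf $\O((p^{e_0}-1)(K+\Delta_{S_0}))$ is invertible is open; and one should remark that $\Delta_{S_0}$ restricts back to $\Delta$ because the generic points of the components of $\Delta$ lie in $\Spec R$. Neither point is a gap, just bookkeeping.
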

\begin{proof}
Suppose that $R = S^{-1} A$ where $A$ is of finite type over an $F$-finite field and $S$ is a multiplicative set.  For each point $Q \in X = \Spec R \subseteq \Spec A = Y$, there exists an element $f_Q' \in A$ such that $U_Q' = \Spec A_{f_Q'} \subseteq Y$ is an open set containing $Q$ and that $U_Q'$ is normal.  We set $V_Q' = \Spec R_{f_Q'} \subseteq X$ and note that $V_Q' = U_Q' \cap X$.  We may extend $\Delta|_{V_Q'}$ to $\Delta_{U_Q'}$ on $U_Q'$ for each $Q$ by pull-back.  Choosing appropriate $f_Q \in A$, we may find $U_Q = \Spec A_{f_Q} \subseteq \Spec A_{f_Q'} = U_Q'$ so that $(K_{U_Q} + \Delta_{U_Q'}|_{U_Q})$ is $\bQ$-Cartier with index not divisible by $p > 0$.  Set $V_Q$ to be $U_Q \cap X = \Spec R_{f_Q}$.

Note that we do have discreteness and rationality on each of the $U_Q$ by hypothesis and thus also on each of the $V_Q$.  But we can cover $X$ by finitely many of the $U_Q$ and so we are done by Proposition \ref{PropPassToAffineCover}.
\end{proof}

\begin{lemma}
\label{LemAppliedFedderDiscretenessAndRationality}
Let $S = k[x_1, \ldots, x_m]$ be a polynomial ring over an $F$-finite field $k$, $Q$ be a prime ideal of $S$ and $\mathfrak{b}$ be an ideal of $S$ such that $\mathfrak{b} \nsubseteq Q$.
Set $R=S/Q$ and $\ba=\mathfrak{b}R$.
Suppose that $(X = \Spec R; \Delta_R, \ba)$ is a triple such that $\O_X((1-p^e)(K_X + \Delta_R))$ is free.  Set $\Delta_S$ to be a $\bQ$-divisor on $Y:=\Spec S$ related to $\Delta_R$ by $F$-adjunction as in Theorem \ref{TheoremFedderFadjunction}.
Then the discreteness and rationality of the jumping numbers of $\tau_b(Y, \nsubseteq Q; \Delta_S, \mathfrak{b}^t)$ implies the discreteness and rationality of the jumping numbers for $\tau_b(X; \Delta_R, \ba^t)$.
\end{lemma}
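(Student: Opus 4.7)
The plan is to deduce the lemma as a direct application of the restriction theorem for test ideals along $F$-pure centers (Theorem \ref{ThmRestrictionForTestIdeals}), which says exactly that restricting $\tau_b(Y, \nsubseteq Q; \Delta_S, \mathfrak{b}^t)$ to the $F$-pure center $X = V(Q)$ recovers $\tau_b(X; \Delta_R, \ba^t)$. Once this identity is established at every parameter value $t$, the inclusion of $F$-jumping numbers is automatic.

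First I would verify that the setup of Definition \ref{DefnUltraGeneralizedTestIdeal} is satisfied for $(Y, \nsubseteq Q; \Delta_S, \mathfrak{b}^t)$. Since $\Delta_S$ arises from $\Delta_R$ via $F$-adjunction as in Theorem \ref{TheoremFedderFadjunction}, the subscheme $X = V(Q) \subseteq Y$ is a center of $F$-purity for $(Y, \Delta_S)$, and $(Y, \Delta_S)$ is sharply $F$-pure at the generic point of $X$. The hypothesis $\mathfrak{b} \nsubseteq Q$ is exactly the condition that $\mathfrak{b}$ does not vanish on $X$. Applying Theorem \ref{ThmRestrictionForTestIdeals} with $W = X$ therefore gives, for every real number $t > 0$, the identity
\[
\tau_b(Y, \nsubseteq Q; \Delta_S, \mathfrak{b}^t)\big|_X \;=\; \tau_b(X; \Delta_R, \overline{\mathfrak{b}}^t) \;=\; \tau_b(X; \Delta_R, \ba^t),
\]
using $\ba = \mathfrak{b} R = \overline{\mathfrak{b}}$.

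Next I would argue that the set of $F$-jumping numbers of $\tau_b(X; \Delta_R, \ba^t)$ is a subset of that of $\tau_b(Y, \nsubseteq Q; \Delta_S, \mathfrak{b}^t)$. Suppose $\alpha > 0$ is an $F$-jumping number for $\tau_b(X; \Delta_R, \ba^t)$. If $\alpha$ were not an $F$-jumping number on the $Y$ side, then for some $\epsilon > 0$ we would have $\tau_b(Y, \nsubseteq Q; \Delta_S, \mathfrak{b}^\alpha) = \tau_b(Y, \nsubseteq Q; \Delta_S, \mathfrak{b}^{\alpha - \epsilon})$. Restricting to $X$ via the displayed identity would then give $\tau_b(X; \Delta_R, \ba^\alpha) = \tau_b(X; \Delta_R, \ba^{\alpha - \epsilon})$, contradicting that $\alpha$ is an $F$-jumping number for the triple on $X$. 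Hence the $F$-jumping numbers of $\tau_b(X; \Delta_R, \ba^t)$ embed into those of $\tau_b(Y, \nsubseteq Q; \Delta_S, \mathfrak{b}^t)$, and any discreteness and rationality of the latter set is inherited by the former.

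The main obstacle here is, as expected, conceptual rather than technical: the entire content of the reduction is encoded in the restriction theorem, so one must be sure the hypotheses there apply uniformly in $t$ (which they do, since Theorem \ref{ThmRestrictionForTestIdeals} has no dependence on $t$). The freeness assumption on $\O_X((1-p^e)(K_X+\Delta_R))$ in the lemma is exactly what guarantees that the map $\phi_e$ needed to define $\tau_b(Y, \nsubseteq Q; \Delta_S, \mathfrak{b}^t)$ via Definition \ref{DefnUltraGeneralizedTestIdeal} is available, so no additional work on the cover or on the choice of $e_0$ is needed. The substantive work -- bounding degrees of generators of the variant test ideal on the polynomial ring $S = k[x_1,\ldots,x_m]$ -- will happen in the next step, not here.
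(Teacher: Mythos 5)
Your proposal is correct and matches the paper's proof, which likewise consists of the single observation that $\tau_b(Y, \nsubseteq Q; \Delta_S, \mathfrak{b}^t)\,S/Q = \tau_b(X; \Delta_R, \ba^t)$ for all $t$ by Theorem \ref{ThmRestrictionForTestIdeals}. You merely spell out the (immediate) contrapositive argument that this identity forces the $F$-jumping numbers on $X$ to be contained in those on $Y$, which the paper leaves implicit.
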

\begin{proof}
 This follows immediately since for all $t \geq 0$ we have \[\tau_b(Y, \nsubseteq Q; \Delta_S, \mathfrak{b}^t) S/Q = \tau_b(X; \Delta_R, \ba^t)\] by Theorem \ref{ThmRestrictionForTestIdeals}.
\end{proof}

The preceding two lemmata show that it is sufficient to prove discreteness and rationality of the jumping numbers for the variant of the test ideal $\tau_b(Y, \nsubseteq Q; \Delta_S, \mathfrak{b}^t)$ in the case that $Y=\Spec k[x_1,\ldots,x_m]$ and $k$ is an $F$-finite field. Hence, the proof of Theorem \ref{ThmMainNonPrincipalCase} is completed once the following proposition has been proven.

\begin{proposition}
\label{PropDiscRatUltraGenAffineSpace}
Let $S=k[x_1,\ldots,x_m]$ be a polynomial ring over an $F$-finite field $k$. and suppose that $(Y=\Spec S,\nsubseteq Q; \Delta_S, \ba)$ otherwise satisfies the conditions in Definition \ref{DefnUltraGeneralizedTestIdeal}. Then the set of jumping numbers of $\tau_b(Y,\nsubseteq Q; \Delta, \ba^t)$ is a discrete set of rational numbers.
\end{proposition}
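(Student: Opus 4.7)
The plan is to adapt the strategy of \cite{BlickleMustataSmithDiscretenessAndRationalityOfFThresholds}: uniformly bound the degrees of generators of $\tau_b(Y, \nsubseteq Q; \Delta_S, \ba^t)$ as $t$ ranges over a bounded interval, deduce discreteness of $F$-jumping numbers from this bound by a finite-dimensional vector space argument, and then combine this with Lemma \ref{LemmaPAlphaJumps} and Corollary \ref{CorOfSkodaTypeTheoremForJumping} to derive rationality.

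For the degree bound, fix polynomial generators of $\ba$ of degree at most $d_\ba$ and of the base test ideal $\tau_b(Y, \nsubseteq Q; \Delta_S)$ of degree at most $d_0$. By Lemma \ref{LemmaAlternateUltraGeneralizedTestIdeal},
\[
\tau_b(Y, \nsubseteq Q; \Delta_S, \ba^t) \;=\; \bigcup_n \phi_{ne_0}\bigl(F^{ne_0}_*\,\ba^{\lceil t p^{ne_0} \rceil}\tau_b(Y, \nsubseteq Q; \Delta_S)\bigr),
\]
an ascending chain which stabilizes in the Noetherian ring $S$. The map $\phi_{e_0}$ associated to $\Delta_S$ decomposes (via the correspondence (\ref{EqnGlobalBijection})) as multiplication inside $F^{e_0}_*S$ by a polynomial of degree $(p^{e_0}-1)\deg\Delta_S$ followed by the explicit Cartier map $\psi_{e_0}$ of Remark \ref{remarkCartierMap}, which reduces degrees by a factor of $p^{e_0}$. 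Using Remark \ref{RemarkDefinitionOfComposedPInverseLinear} to iterate, the resulting geometric series produces a constant $C$, depending only on $\Delta_S$ and $e_0$, such that
\[
\deg\phi_{ne_0}(F^{ne_0}_* h)\;\leq\;\deg(h)/p^{ne_0} + C
\]
for every $h\in S$ and every $n\geq 0$. Applying this to $h\in\ba^{\lceil tp^{ne_0}\rceil}\tau_b(Y, \nsubseteq Q; \Delta_S)$, of degree at most $(tp^{ne_0}+1)d_\ba+d_0$, we see that for $t$ in a fixed interval $[0,T]$ the test ideal $\tau_b(Y, \nsubseteq Q; \Delta_S, \ba^t)$ is generated in degree at most a fixed integer $D=D(T,d_\ba,d_0,\Delta_S)$.

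Now discreteness follows formally: any ideal $J\subseteq S$ generated in degree at most $D$ is determined by $J\cap S_{\leq D}$, since $J=S\cdot(J\cap S_{\leq D})$. Consequently an infinite strictly increasing sequence $\alpha_1<\alpha_2<\cdots$ of $F$-jumping numbers in $[0,T]$ would yield an infinite strictly descending chain of subspaces of the finite-dimensional $k$-vector space $S_{\leq D}$, which is absurd.

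For rationality, let $\alpha$ be an $F$-jumping number and suppose, towards a contradiction, that $\alpha\notin\bQ$. Iterating Lemma \ref{LemmaPAlphaJumps} produces $F$-jumping numbers $p^{ne_0}\alpha$ for every $n\geq 0$. Let $l$ be the number of generators of $\ba$; for $n$ large enough that $p^{ne_0}\alpha>l+1$, iterating Corollary \ref{CorOfSkodaTypeTheoremForJumping} allows us to subtract the appropriate integer and obtain an $F$-jumping number in $(l,l+1]$ with fractional part $\{p^{ne_0}\alpha\}$. Irrationality of $\alpha$ forces $(p^{ne_0}-p^{me_0})\alpha\notin\bZ$ for $n\neq m$, so these fractional parts are pairwise distinct; hence we obtain infinitely many distinct $F$-jumping numbers in the bounded interval $(l,l+1]$, contradicting discreteness. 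The principal obstacle in executing this plan is the uniform degree estimate for $\phi_{ne_0}$: one must verify that the twisting factors coming from the sections defining $(p^{ne_0}-1)\Delta_S$ contribute only a bounded (in $n$) additive correction to the degree, which is ensured by the convergence of the geometric series $\sum_{j\geq 0}p^{-je_0}$.
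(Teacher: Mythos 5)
Your proposal is correct and takes essentially the same route as the paper: a uniform degree bound for the generators of $\tau_b(Y, \nsubseteq Q; \Delta_S, \ba^t)$ (the paper's Lemma \ref{LemmaDegreeBoundCartierLinea} together with Proposition \ref{PropBoundOnDegreeForTestIdealVariant}; your Cartier-map decomposition is exactly the refinement sketched in the remark following that proposition), then the finite-dimensional chain argument for discreteness, and finally Lemma \ref{LemmaPAlphaJumps} plus Corollary \ref{CorOfSkodaTypeTheoremForJumping} for rationality. The only points to tighten are converting your elementwise estimate into a bound on ideal generators of the image $\phi_{ne_0}(F^{ne_0}_* J)$ (one must account for the $S$-module generators of $F^{ne_0}_* S$, or use the dual-basis maps, as in Proposition \ref{PropBoundOnDegreeForTestIdealVariant}), and ruling out accumulation of jumping numbers from above as well, either via Lemma \ref{LemmaPositiveIncreaseOk} as the paper does or by noting that a decreasing sequence of jumping numbers gives a strictly ascending chain of subspaces of $S_{\leq D}$, which is equally impossible.
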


\begin{proof}
The proof is essentially the same as \cite[Proof of Theorem 3.1]{BlickleMustataSmithDiscretenessAndRationalityOfFThresholds}. The key ingredient in op.~cit.~is a degree bound for the generators of the test ideal, which we provide in Proposition \ref{PropBoundOnDegreeForTestIdealVariant} below for the variant $\tau_b(Y,\nsubseteq Q; \Delta, \ba^t)$. The argument proceeds as follows.

Suppose that $\ba$ is generated by polynomials of degree $\leq a$. If $\alpha$ is an accumulation point of jumping numbers, we may -- by Lemma \ref{LemmaPositiveIncreaseOk} -- choose an \emph{increasing} sequence $\alpha_i$ of jumping numbers converging to $\alpha$. Since $\alpha_i \leq \alpha$ it follows by Proposition \ref{PropBoundOnDegreeForTestIdealVariant} below that \emph{for each $i$}, the test ideal
\[
    \tau_b(Y,\nsubseteq Q; \Delta, \ba^{\alpha_i})
\]
is generated by polynomials of degree less than or equal to $\lfloor a \cdot \alpha + C \rfloor$ where $C$ is a constant only depending on $Y$, $Q$ and $\Delta$, but not on $\ba$ or $t$. This implies that the descending sequence of test ideals $\tau_b(Y,\nsubseteq Q; \Delta, \ba^{\alpha_i})$ is determined by a descending sequence of subspaces in the finite dimensional $k$-vector space of polynomials of degree less or equal to $\lfloor a \cdot \alpha + C \rfloor$. Hence this descending sequence must stabilize, contradicting the assumption that each $\alpha_i$ is a jumping number. This shows the discreteness, and the rationality follows with the standard argument using the $p^e$-periodicity (Lemma \ref{LemmaPAlphaJumps}) and Skoda's theorem (Corollary \ref{CorOfSkodaTypeTheoremForJumping}) just as in \cite[Proof of Theorem 3.1]{BlickleMustataSmithDiscretenessAndRationalityOfFThresholds}.
\end{proof}

It remains to show the degree bound for the test ideals $\tau_b(Y,\nsubseteq Q; \Delta, \ba^t)$. We start with the following simple lemma, cf.~\cite{AndersonElementaryLFunctions}.

\begin{lemma}
\label{LemmaDegreeBoundCartierLinea}
Let $S=k[x_1,\ldots,x_m]$ be the polynomial ring over the $F$-finite field $k$, and let $\phi: S \to S$ be a $p^{-e}$-linear map. Then there is a constant $\delta$ such that for all $r \in S$ and all $n > 0$ one has
\[
\deg (\phi^n(r)) \leq \frac{\deg(r)}{p^{ne}}+\frac{\delta}{p^e-1}.
\]
\end{lemma}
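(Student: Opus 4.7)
The plan is to prove the one-step bound
\[
\deg(\phi(r))\;\leq\;\frac{\deg(r)}{p^e}+\delta_1
\]
for some constant $\delta_1$ depending only on $\phi$, and then iterate to get the stated bound with $\delta=p^e\delta_1$. The geometric sum $\sum_{k=0}^{n-1}p^{-ke}\leq p^e/(p^e-1)$ will give the factor $1/(p^e-1)$ in the denominator of the constant term.

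To set up the one-step bound, I would use that $\phi$ is equivalent to an $S$-linear map $F^e_*S\to S$, and that $F^e_*S$ is a finite free $S$-module. Explicitly, since $k$ is $F$-finite, we may fix a finite basis $\{\lambda_s\}$ of $k$ over $k^{p^e}$; then the elements $\lambda_s\cdot x^{\underline{i}}$, for $s$ in that basis and $\underline{i}=(i_1,\ldots,i_m)$ with $0\leq i_j<p^e$, form an $S$-basis of $F^e_*S$ (where $S$ acts via $p^e$-th powers). Any $r\in S$ then has a unique decomposition
\[
r=\sum_{\underline{i},s} g_{\underline{i},s}^{\,p^e}\,\lambda_s\, x^{\underline{i}},
\qquad g_{\underline{i},s}\in S.
\]
Comparing degrees monomial by monomial, $p^e\deg(g_{\underline{i},s})+|\underline{i}|\leq\deg(r)$, so $\deg(g_{\underline{i},s})\leq\deg(r)/p^e$. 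Applying $\phi$ and using its $p^{-e}$-linearity yields
\[
\phi(r)=\sum_{\underline{i},s} g_{\underline{i},s}\,\phi(\lambda_s x^{\underline{i}}),
\]
which gives $\deg(\phi(r))\leq\deg(r)/p^e+\delta_1$ where
\[
\delta_1\;:=\;\max_{\underline{i},s}\deg\bigl(\phi(\lambda_s x^{\underline{i}})\bigr).
\]
This maximum is finite because there are only finitely many pairs $(\underline{i},s)$, precisely because $k$ is $F$-finite.

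Then I would induct on $n$: note that $\phi^n$ is a $p^{-ne}$-linear map, so the same argument (with $p^{ne}$ in place of $p^e$) would yield a constant depending on $n$, but it is cleaner to iterate the one-step bound. One obtains
\[
\deg(\phi^n(r))\;\leq\;\frac{\deg(r)}{p^{ne}}+\delta_1\sum_{k=0}^{n-1}\frac{1}{p^{ke}}
\;\leq\;\frac{\deg(r)}{p^{ne}}+\frac{\delta_1\,p^e}{p^e-1},
\]
so $\delta:=p^e\delta_1$ works. There is no real obstacle; the only point that needs attention is making sure the decomposition in step~3 exists with the claimed degree bound, which follows from the fact that the division of multi-exponents $\underline{\alpha}=p^e\underline{a}+\underline{i}$ with $0\leq i_j<p^e$ is unique and respects total degree.
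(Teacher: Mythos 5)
Your proposal is correct and follows essentially the same route as the paper: decompose $r$ over the $S^{p^e}$-basis $\{\lambda_s x^{\underline{i}}\}$ of $S$ coming from a $k^{p^e}$-basis of $k$, get the one-step bound $\deg(\phi(r))\leq \deg(r)/p^e+\delta_1$ with $\delta_1=\max\deg(\phi(\lambda_s x^{\underline{i}}))$, and iterate with the geometric series; your $\delta=p^e\delta_1$ is exactly the paper's constant. The paper carries the partial sum $(1+\dots+p^{(n-1)e})/p^{ne}$ through the induction rather than invoking the infinite geometric series, but this is only a cosmetic difference.
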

\begin{proof}
Let $\{b_j\}_{1 \leq j \leq l}$ be a $k^{p^e}$-basis of $k$. We know that $S$ is freely generated by the monomials $\{b_jx^{\underline{i}}=b_jx_1^{i_1}x_2^{i_2}\cdot\ldots\cdot x_m^{i_m}\,|\, 0\leq i_j \leq p^e-1 , 1 \leq j \leq l\, \}$ as an $S^{p^e}$-module. Let $\delta = p^e \cdot \max_{\underline{i},j} \{ \deg(\phi(b_jx^{\underline{i}})) \}$. Then, writing $r = \sum r_{\underline{i},j}^{p^e}b_jx^{\underline{i}}$ uniquely for $r_{\underline{i},j} \in S$, we see that $p^e\deg(r_{\underline{i},j})\leq \deg(r)$ for each multi-index $\underline{i}$ (this requires a small (omitted) argument that uses the monomial shape of the basis). Since $\phi(r)=\sum r_{\underline{i},j}\phi(b_jx^{\underline{i}})$ by $p^{-e}$-linearity we get
\[
    \deg(\phi(r)) \leq \max_{\underline{i},j} \{\deg{r_{\underline{i},j}}\} + \frac{\delta}{p^e} \leq \frac{\deg(r)}{p^e}+\frac{\delta}{p^e}\left(\leq \frac{\deg(r)}{p^e}+\frac{\delta}{p^e-1}\right).
\]
We now proceed by induction:
Assume
\[
 \deg(\phi^n(r)) \leq \frac{\deg(r)}{p^{ne}}+\frac{\delta\cdot(1 + \dots + p^{(n-1)e}) }{p^{ne}}
\]
for some $n \geq 1$.
Then
\[
\begin{split}
\deg(\phi^{n+1}(r)) &= \deg(\phi(\phi^n(r)))\\
                    &\leq \left( \frac{\deg(r)}{p^{ne}}+\frac{\delta\cdot(1 + \dots + p^{(n-1)e}) }{p^{ne}} \right) \Big/ p^e + {\delta \over p^e} \\
                    &= \frac{\deg(r)}{p^{(n+1)e}}+\frac{\delta\cdot(1 + \dots + p^{ne}) }{p^{(n+1)e}} \leq \frac{\deg(r)}{p^{(n+1)e}} + {\delta \over p^e - 1}.
\end{split}
\]
\end{proof}

\begin{proposition}[{cf.~\cite[Proposition 3.2]{BlickleMustataSmithDiscretenessAndRationalityOfFThresholds}}]
\label{PropBoundOnDegreeForTestIdealVariant}
Let $S=k[x_1,\ldots,x_m]$ be a polynomial ring over an $F$-finite field $k$ and suppose in addition that $(Y=\Spec S,\nsubseteq Q; \Delta_S, \ba)$ satisfies the conditions in Definition \ref{DefnUltraGeneralizedTestIdeal}. Assume that
\begin{itemize}
\item $\ba$ is generated in degree $\leq a$, and
\item let $\delta_e=\delta$ be a bound satisfying the conditions of the preceding Lemma \ref{LemmaDegreeBoundCartierLinea} for $\phi_e$, a map $F^e_*S \to S$ corresponding to $\Delta_S$ via (\ref{EqnGlobalBijection}) on page \pageref{EqnGlobalBijection}.
\end{itemize}
Then the test ideal $\tau_b(Y, \nsubseteq Q; \Delta_S, \fra^t)$ is generated in degree $\leq m+\lfloor ta + \frac{\delta}{p^e-1} \rfloor$
\end{proposition}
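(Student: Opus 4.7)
The plan is to adapt the strategy of \cite[Proof of Proposition 3.2]{BlickleMustataSmithDiscretenessAndRationalityOfFThresholds} to the ``adjoint'' test ideal $\tau:=\tau_b(Y,\nsubseteq Q;\Delta_S,\ba^t)$, with Lemma \ref{LemmaDegreeBoundCartierLinea} taking the place of the explicit computation used in the smooth case. First, I would fix an element $c \in S \setminus Q$ as in Proposition \ref{PropositionUniformExistenceForTestElts}. By Definition \ref{DefnAlternateAdjointTestIdeal} together with the Remark following it, one has for every $k\ge 0$
\[
    \tau \;=\; \sum_{n\ge k}\phi_{ne}\bigl(F^{ne}_*\,c\,\ba^{\lceil t(p^{ne}-1)\rceil}\bigr),
\]
and since $\tau$ is finitely generated this sum stabilizes as a finite sum. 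Writing $F^{ne}_*S$ as the free $S^{p^{ne}}$-module on the monomials $b_j x^{\underline{i}}$ with $\{b_j\}$ a $k^{p^{ne}}$-basis of $k$ and $0\le i_\ell\le p^{ne}-1$, the $S$-module generators of $F^{ne}_*\bigl(c\,\ba^{\lceil t(p^{ne}-1)\rceil}\bigr)$ all have the form $b_j x^{\underline{i}}\,rc$, where $r$ is a generator of $\ba^{\lceil t(p^{ne}-1)\rceil}$ of degree $\le a\lceil t(p^{ne}-1)\rceil$. Hence
\[
    \deg\bigl(b_j x^{\underline{i}} rc\bigr)\;\le\; m(p^{ne}-1)+a\lceil t(p^{ne}-1)\rceil+\deg c.
\]

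Next I would apply Lemma \ref{LemmaDegreeBoundCartierLinea} to $\phi_{ne}=\phi_e^n$ (viewed as in Remark \ref{RemarkDefinitionOfComposedPInverseLinear}) with the constant $\delta=\delta_e$ attached to $\phi_e$, obtaining
\[
    \deg\phi_{ne}\bigl(b_j x^{\underline{i}} rc\bigr)\;\le\;\frac{m(p^{ne}-1)+a\lceil t(p^{ne}-1)\rceil+\deg c}{p^{ne}}+\frac{\delta}{p^e-1}.
\]
Using the elementary estimate $\lceil t(p^{ne}-1)\rceil \le tp^{ne}+1$, the right-hand side is dominated by
\[
    m + ta + \frac{\delta}{p^e-1} + \frac{a+\deg c - m}{p^{ne}},
\]
whose last (error) term tends to $0$ as $n \to \infty$. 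For every $\epsilon>0$ one can thus choose $k$ so large that every generator produced by any term with $n\ge k$ has degree at most $m+ta+\delta/(p^e-1)+\epsilon$. Since polynomial degrees are non-negative integers, taking $\epsilon$ small enough (smaller than the distance from $m+ta+\delta/(p^e-1)$ to the next larger integer, or simply $\epsilon<1$ when that quantity is already an integer) forces every such degree to be $\le\lfloor m+ta+\delta/(p^e-1)\rfloor = m+\lfloor ta+\delta/(p^e-1)\rfloor$. Combining this bound over the finitely many surviving terms of the stabilized sum yields the asserted generating set.

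The substantive step, controlling degree growth under iterated $p^{-e}$-linear maps, is already packaged in Lemma \ref{LemmaDegreeBoundCartierLinea}, so I expect the main nuisance to be purely bookkeeping: carefully tracking the interaction between the ceiling $\lceil t(p^{ne}-1)\rceil$, the denominator $p^{ne}$, and the integer rounding so as to land exactly on the stated floor-valued bound rather than a slightly weaker one.
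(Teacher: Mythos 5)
Your proof is correct and follows essentially the same route as the paper: express $\tau_b(Y,\nsubseteq Q;\Delta_S,\ba^t)$ via images of the maps $\phi_{ne}$, bound the degrees of the obvious $S$-module generators using the degree $\leq m(p^{ne}-1)$ generators of $F^{ne}_*S$ together with Lemma \ref{LemmaDegreeBoundCartierLinea}, and take $n$ (respectively $k$) large enough that the $O(1/p^{ne})$ error term cannot affect the integer round-down. The only deviation is that you feed in the description from Definition \ref{DefnAlternateAdjointTestIdeal} (the element $c$, exponents $\lceil t(p^{ne}-1)\rceil$, sum over $n\geq k$) where the paper uses Lemma \ref{LemmaAlternateUltraGeneralizedTestIdeal} (exponents $\lceil tp^{ne}\rceil$ against $\tau_b(Y,\nsubseteq Q;\Delta_S)$, a single large $n$); this merely replaces the constant $d$ by $\deg c$ inside the vanishing error term and changes nothing essential.
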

\begin{proof}
Suppose that $\tau_b(Y, \nsubseteq Q; \Delta_S)$ can be generated by polynomials of degree at most $d$. By Lemma \ref{LemmaAlternateUltraGeneralizedTestIdeal}, we may choose some integer $n$ such that
\[
 \tau_b(Y, \nsubseteq Q;  \Delta_S, \fra^t) = \phi_{ne}(F^{ne}_* \fra^{\lceil t p^{ne} \rceil} \tau_b(Y, \nsubseteq Q;  \Delta_S)).
\]
Making $n$ larger is harmless, so we may assume that $\frac{a+d}{p^{ne}} < 1-\{\frac{\delta}{p^e-1}+ta\}$. Since $\fra^{\lceil t p^{ne} \rceil}  \tau_b(Y, \nsubseteq Q;  \Delta_S)$ is generated by elements of degree at most $\lceil tp^{ne} \rceil a + d$, the $S$-module $F^{ne}_*\fra^{\lceil t p^{ne} \rceil}  \tau_b(Y, \nsubseteq Q;  \Delta_S)$ can be generated by elements of degree $\leq (p^{ne}-1)m + \lceil tp^{ne} \rceil a + d$. This follows simply since -- as just observed in the proof of Lemma \ref{LemmaDegreeBoundCartierLinea} -- the $S$-module $F^{ne}_*S$ is generated by elements of degree $\leq (p^{ne}-1)m$. Hence by Lemma \ref{LemmaDegreeBoundCartierLinea}, $\tau_b(Y, \nsubseteq Q;  \Delta_S, \fra^t)$ can be generated in degree at most
\[
{(p^{ne}-1)m+\lceil tp^{ne} \rceil a + d \over p^{ne} } + \frac{\delta}{p^e-1}\leq m+ta+\frac{a+d}{p^{ne}}+\frac{\delta}{p^e-1}.
\]
Since the degree is an integer we may take the round down, and since we arranged that the term $\frac{a+d}{p^{ne}} < 1-\{\frac{\delta}{p^e-1}+ta\}$, this term does not affect the round down. Hence we get the upper bound $m+ \lfloor ta+\frac{\delta}{p^e-1} \rfloor$ as claimed.
\end{proof}

\begin{remark}
The bound we obtained in the preceding proposition is not optimal. In fact, the appearance of the term $m$ can be avoided with the help of a slightly refined argument. Since for our purpose only the universality of the degree bound is essential, we only sketch the proof for the better bound in the case that $k$ is perfect:

As before in Remark \ref{remarkCartierMap} we take the basis $\{e_{e,{\underline{i}}}={\underline{x}}^{\underline{i}}\}_{0 \leq i_j \leq p^e-1}$ of $F^e_*S$ over $S$, and recall that the Cartier map
\[
    \psi_e({\underline{x}}^i)=\begin{cases} {\underline{x}}^{((i+\underline{1})/p^e)-\underline{1}} & \text{if $p^e$ divides each $i_j+1$} \\
    0 & \text{otherwise},
    \end{cases}
\]
where $\underline{1}=(1,\ldots,1)$. We saw in Remark \ref{remarkCartierMap} that the maps $\psi_{e,{\underline{i}}}=\psi_e(F^e_*(\underline{x}^{p^e-1}/\underline{x}^{\underline{i}})\cdot\blank)$ form an $R$-module basis of $\Hom_R(F^e_*R,R)$. Each of the $\psi_{e,{\underline{i}}}$ has the property that it sends a basis element $e_{e,{\underline{i}}'}$ to 1 if and only if ${\underline{i}}'={\underline{i}}$, and to zero otherwise (it is the basis dual to $x^{\underline{i}}$. In particular, the proof of Lemma \ref{LemmaDegreeBoundCartierLinea} hence shows that $\deg(\psi_{e,{\underline{i}}}(r)) \leq \frac{\deg(r)}{p^e}$ for each $\psi_{e,{\underline{i}}}$.

Also in Remark \ref{remarkCartierMap} we observed that $\psi_e$ is an $F^e_*R$-module generator of $\Hom(F^e_*R,R)$. Hence, an arbitrary $\phi_e \in \Hom(F^e_*R,R)$ may be written as $\phi_e(\blank)=\psi_e(F^e_*f_e \cdot \blank)$ for some $f_e \in S$. If an ideal $J$ is generated by $h_1,\ldots,h_k$, then
\[
    \phi_{e}(F^e_*J)=\psi_{e}(F^e_*f_e\sum_j F^e_*S h_j) = \sum_{j} \psi_e(\sum_{{\underline{i}}}SF^e_*{\underline{x}}^{\underline{i}} f_e h_j)=\sum_{j,{\underline{i}}}S\psi_{e,{\underline{i}}}(f_e h_j)
\]
and hence the ideal $\phi_{e}(F^e_*J)$ is generated by the elements $\psi_{e,{\underline{i}}}(f_e h_j)$. If each $\deg h_j \leq b$, then $\phi_{e}(F^e_*J)$ is generated in degree $\leq \frac{b}{p^e}+\frac{\deg f_e}{p^e}$ by the above observation about the $\psi_{e,{\underline{i}}}$. Recall that, if $\phi_e=\psi_e(F^e_*f_e\cdot \blank)$, then $\phi_{ne}=\psi_{ne}(F^e_*f^{1+p^e+\ldots+p^{(n-1)e}}\cdot \blank)$. Hence, applying the above for $e$ replaced with $ne$, we get that $\phi_{ne}(J)$ is generated in degree $\leq \frac{b}{p^{ne}}+\frac{\deg f_e(1+p^e+\ldots+p^{(n-1)e}}{p^{ne}}\leq \frac{b}{p^{ne}}+\frac{\deg f_e}{p^e-1}$. Using this bound in the proof of the preceding Proposition one obtains the improved bound $\lfloor ta + \frac{\deg f_e}{p^e-1} \rfloor$ for the generators of the test ideal $\tau_b(S,\nsubseteq Q;\Delta,\ba^t)$.
\end{remark}



\begin{remark}
To make this statement even more effective, one would need to control the degree of $f_e$.
In general, we don't know how to this, however, if the coordinate ring of $X$ is a quasi-Gorenstein graded ring and $\Delta_R=0$ in the situation of Lemma \ref{LemAppliedFedderDiscretenessAndRationality}, then we can describe the degree of $f_e$ very explicitly.

Let $S:=k[x_1, \dots, x_m]$ be a polynomial ring over an $F$-finite field $k$, and assume that $R:=S/Q$ is a quasi-Gorenstein normal graded ring with $R_0=k$.
First note that $f_e$ is a generator for the cyclic graded $R$-module $(Q^{[q]}:Q)/Q^{[q]}$ for $q=p^e$.
By \cite[Lemma 1.6]{FedderFPureRat} (which is stated only in the case of local rings, but also works in the case of graded rings), there exists a degree-preserving isomorphism
$$(Q^{[q]}:Q) {}^*\!\Hom_{S^q}(S, S^q)/ Q^{[q]} {}^*\!\Hom_{S^q}(S, S^q) \cong {}^*\!\Hom_{R^q}(R, R^q)$$
for all $q=p^e$.
Let $a(R), a(S)$ be the $a$-invariants of $R$ and $S$, respectively (see \cite{GotoWatanabeOnGradedRings} for the definition of $a$-invariants).
Since $\omega_R \cong R(a(R))$ by assumption, $\omega(R^q) \cong R^q(a(R^q)) \cong R^q(q \cdot a(R))$.
Then
\begin{align*}
{}^*\!\Hom_{R^q}(R, R^q) \cong {}^*\!\Hom_{R^q}(R, \omega_{R^q})(-q \cdot a(R)) & \cong \omega_R(-q  \cdot a(R))\\
& \cong R((1-q) a(R)).
\end{align*}
Similarly, one has ${}^*\!\Hom_{S^q}(S, S^q) \cong S((1-q)a(S))$.
Thus, $(Q^{[q]}:Q)/Q^{[q]}$ is a cyclic graded $R$-module generated by an element of degree $(q-1)(a(R)-a(S))$.
That is, $f_e$ has degree $(q-1)(a(R)-a(S))$. Thus, if the ideal $\mathfrak{b} \subseteq S$ is generated by polynomials of degree at most $b$, then $ \tau_b(Y, \nsubseteq Q;  \Delta_S, \mathfrak{b}^t)$ can be generated by polynomials of degree at most $a(R)-a(S)+\lfloor tb \rfloor$.
\end{remark}






\section{Discreteness and rationality for principal ideals}
\label{SectionDiscretenessAndRationalityForPrincipal}

The remainder of this paper is devoted to proving discreteness and rationality of $F$-jumping numbers for triples $(X, \Delta, \ba^t)$ where $X$ is any normal scheme having a canonical divisor $K_X$ such that $K_X + \Delta$ is $\bQ$-Cartier with index not divisible by $p$, and $\ba$ is principal. In the case that $X$ is regular and $F$-finite this result has been obtained in \cite{BlickleMustataSmithFThresholdsOfHypersurfaces} using $D$-module techniques and in \cite{Blickle.MinimalGamma} using variants of a fundamental theorem of Hartshorne and Speiser \cite[Proposition 1.11]{HartshorneSpeiserLocalCohomologyInCharacteristicP} on nilpotence in co-finite modules with a Frobenius action. The same result of op.~cit.~ has been used directly in \cite{KatzmanLyubeznikZhangOnDiscretenessAndRationality} to show rationality and discreteness in the case of an excellent regular local ring, however without assuming $F$-finiteness. This central result of Hartshorne and Speiser roughly says that for a local ring $R$ and a co-finite $R$-module $M$ with a left Frobenius action $F_M$, the \emph{ascending chain} of kernels $\ker F^e_M$ stabilizes after finitely many steps.

\subsection{Stabilization of images of \texorpdfstring{$p^{-e}$}{Cartier}-linear maps}

The key result in our treatment of test ideals for principal ideals is the following globalized dual version of the Hartshorne-Speiser result. In the general form stated below, the result can be found in \cite[Lemma 13.1]{Gabber.tStruc}. This result is also interesting in other related contexts, see for example \cite{BlickleBoeckle.CartierFiniteness,Blickle.MinimalGamma}.
\begin{theorem}[\cite{Gabber.tStruc} Lemma 13.1]
\label{TheoremDualHSLForFFinite}
Let $X$ be a locally noetherian scheme over $\mathbb{F}_{p^e}$. Let $M$ be a coherent $\O_X$-module equipped with a $p^{-e}$-linear map $\phi: M \to M$. Then the descending   sequence of images
\[
    M \supseteq \phi(M) \supseteq \phi^2(M) \supseteq \phi^3(M) \ldots
\]
eventually stabilizes.
\end{theorem}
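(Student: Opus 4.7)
The plan is to reduce to the affine noetherian case and then argue by noetherian induction on the support of $M$. Since the statement is local on $X$, we may replace $X$ by an affine open and assume $X = \Spec R$ with $R$ noetherian and $M$ a finitely generated $R$-module. Write $M_n \defeq \phi^n(M) \subseteq M$, so that $M_n = \phi(F^e_* M_{n-1})$ and $M \supseteq M_1 \supseteq M_2 \supseteq \cdots$. The goal is to produce an index $n$ such that $M_n = M_{n+1}$. We proceed by noetherian induction on the closed set $\Supp(M)$, assuming the statement for every coherent sheaf with a $p^{-e}$-linear endomorphism having strictly smaller support.

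The first key reduction is at the generic points of $\Supp(M)$. Let $\eta_1,\ldots,\eta_k$ be these finitely many generic points. At each $\eta_i$, $M_{\eta_i}$ is a finite-dimensional $k(\eta_i)$-vector space and $(M_n)_{\eta_i}$ is a descending chain of $k(\eta_i)$-subspaces, which must stabilize at some index $n_i$. Setting $n_0 = \max_i n_i$ and replacing $M$ by $M_{n_0}$ (with $\phi$ restricted, which is well-defined since $\phi(F^e_* M_{n_0}) = M_{n_0+1} \subseteq M_{n_0}$), we may assume that $\phi(F^e_* M)_{\eta} = M_{\eta}$ at every generic point $\eta$ of $\Supp(M)$. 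Consequently the cokernel $N \defeq M/\phi(F^e_* M)$ satisfies $\Supp(N) \subsetneq \Supp(M)$. Letting $U \defeq X \setminus \Supp(N)$, the map $\phi|_U$ is surjective, and by induction on $n$ one checks that $M_n|_U = M|_U$ for every $n \geq 0$.

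The final step is to introduce the coherent submodule $M_\infty \defeq \bigcap_{n \geq 0} M_n \subseteq M$ (coherent since $R$ is noetherian). The inclusion $\phi(F^e_* M_\infty) \subseteq \phi(F^e_* M_n) = M_{n+1}$ for every $n$ forces $\phi(F^e_* M_\infty) \subseteq M_\infty$, so $\phi$ descends to a $p^{-e}$-linear endomorphism $\bar\phi$ on $M/M_\infty$. Since $M_n|_U = M|_U$ for every $n$, one has $M_\infty|_U = M|_U$, so $(M/M_\infty)|_U = 0$ and $\Supp(M/M_\infty) \subseteq \Supp(N) \subsetneq \Supp(M)$. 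The inductive hypothesis applied to $(M/M_\infty, \bar\phi)$ yields that $\bar\phi^n(M/M_\infty) = M_n/M_\infty$ stabilizes at some $n_2$; since $M_\infty \subseteq M_n$, this forces $M_n = M_{n_2}$ for every $n \geq n_2$.

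The main obstacle is the identification of the correct object to feed into the inductive step: the naive quotient $M/\phi(F^e_* M)$ lacks any natural $p^{-e}$-linear structure (the induced map on it is zero), so one cannot argue directly from the support-reduction of $N$. The resolution is to pass instead to $M/M_\infty$, which genuinely inherits a $p^{-e}$-linear endomorphism via the $\phi$-invariance of the stable intersection $M_\infty$; coupled with the reduction to generic surjectivity, this closes the induction. The base case $\Supp(M) = \emptyset$ (equivalently $M = 0$) is vacuous.
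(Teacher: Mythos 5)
Your argument has a genuine gap at the step where you pass from ``$M_n|_U = M|_U$ for every $n$'' to ``$M_\infty|_U = M|_U$''. Here $M_\infty = \bigcap_{n\ge 0} M_n$ is an \emph{infinite} intersection of submodules, and forming such an intersection does not commute with localization or with restriction to an open subset: one always has $(\bigcap_n M_n)|_U \subseteq \bigcap_n (M_n|_U)$, but the inclusion can be strict. For instance, with $R=k[t]$, $M=R$, $M_n=(t^n)$ and $U=D(t)$, each $M_n|_U = M|_U$ while $\bigcap_n M_n = 0$, so $M_\infty|_U = 0$. Your chain has the extra structure $M_n=\phi^n(M)$, but nothing in your argument uses that structure at this point, so the claim $\Supp(M/M_\infty)\subseteq \Supp(N)$ --- which is exactly what you need in order to invoke the noetherian induction --- is unproved. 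Concretely, for $\frp\in U$ you know that for each $n$ there is some $s_n\notin\frp$ with $s_n m\in M_n$, but to conclude $m/1\in (M_\infty)_\frp$ you need a \emph{single} $s\notin\frp$ that works for all $n$ simultaneously, i.e.\ a containment $sM\subseteq \phi^n(M)$ uniform in $n$; the ideals $(\phi^n(M):m)$ form a descending chain each not contained in $\frp$, and in general such a chain can have intersection inside $\frp$.

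This missing uniformity is in fact the heart of the theorem, and it is precisely what the paper's proof supplies. There one stabilizes the non-surjectivity loci $Y_n=\supp(\phi^n(M)/\phi^{n+1}(M))$, localizes at a generic point $\frp$ of the stable locus, and exploits $p^{-e}$-linearity: from $\frp^k M\subseteq\phi(M)$ one gets, for $x\in\frp^k$, that $x^2M\subseteq x\phi(M)=\phi(x^{p^e}M)\subseteq\phi(x^2M)$, hence $x^2M\subseteq\phi^a(M)$ for \emph{all} $a$, and then $\frp^{k(b+1)}M\subseteq\phi^a(M)$ for all $a$ --- a conductor ideal independent of $a$, which reduces the problem to a finite-length module. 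A statement of this kind (a fixed element or ideal multiplying $M$ into every $\phi^n(M)$ at the points of $U$) is exactly what would repair your step; without it your induction does not close. A minor additional inaccuracy: at a generic point $\eta_i$ of $\Supp(M)$ the stalk $M_{\eta_i}$ is a finite-length module over $\O_{X,\eta_i}$, not in general a $k(\eta_i)$-vector space, though the DCC conclusion you draw from it is still correct.
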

\label{ThmCartierImagesStabilize}
\begin{proof}
We can and will assume that $X=\Spec R$ is affine. Note that $\phi(M)$ is an $R$-submodule of $M$ since $r\phi(m)=\phi(r^{p^e} m)$. The action of $\phi$ on $M$ clearly restricts to an action on $\phi(M)$. Furthermore, if $S$ is any multiplicative subset of $R$, then $\phi(S^{-1}M)=\phi(S^{-p^e}M)=S^{-1}\phi(M)$, which shows that the formation of the image of the map $\phi$ commutes with localization. This, combined with the functoriality of the image implies that
\[
    Y_n \defeq \supp(\phi^n(M)/\phi(\phi^{n}(M)))=\{x \in X \colon \phi_x|_{\phi_x^n(M_x)} \text{ is not surjective }\}
\]
is a \emph{descending} sequence of closed subsets of $X = \Spec R$.  Since $X$ is noetherian this sequence must stabilize.  In other words, there exists $n\geq 0$ such that for all $m \geq n$ we have $Y_n=Y_m(\defeq Y)$. By replacing $M$ by $\phi^n(M)$ we may hence assume that for all $n$ we have $Y=\supp(\phi^n(M)/\phi(\phi^{n}(M)))$.

The statement that the chain $\phi^n(M)$ stabilizes means precisely that $Y$ is empty. So let us assume otherwise and let $Z=V(\frp)$ be an irreducible component of $Y$. Localizing at $\frp$ (and replacing $R$ by $R_{\frp}$, $M$ by $M_{\frp}$) we may hence assume that $X=\Spec R$, where $(R,\frp)$ is a local ring and $\supp(\phi^n(M)/\phi^{n+1}(M))=\{\frp\}$ for all $n$. In particular, for $n=0$, we get that there is $k > 0$ such that $\frp^k M \subseteq \phi(M)$. Then for $x \in \frp^k$,
\[
    x^2M \subseteq x\frp^k M \subseteq x \phi(M) = \phi(x^{p^e}M) \subseteq \phi(x^2M)
\]
and by iterating we obtain, for all $x\in \frp^k$ and all $a \in \mathbb{N}$, that $x^2M \subseteq \phi^a(M)$. Hence
\[
    \frp^{k(b+1)}M \subseteq (\frp^k)^{[2]}M \subseteq \phi^a(M) \text{ for all } a \in \mathbb{N}
\]
where $b$ is the number of generators of $\frp^k$ and $(\frp^k)^{[2]}$ is the ideal generated by the squares of the elements of $\frp^k$ (and it is easy to check that there is an inclusion $\frp^{k(b+1)} \subseteq (\frp^k)^{[2]}$). Hence the chain $\phi^a(M)$ stabilizes if and only if the chain $\phi^a(M)/\frp^{k(b+1)}M$ does. But this is a chain in $M/\frp^{k(b+1)}M$, which has finite length. This contradicts our assumption that $\supp(\phi^n(M)/\phi^{n+1}(M)) \neq \emptyset$ for all $n$.
\end{proof}
\begin{remark} In the case that $(R, \bm)$ is $F$-finite and local, Theorem \ref{ThmCartierImagesStabilize} is via Matlis duality equivalent to the above mentioned result of Hartshorne and Speiser saying that for a $p^e$-linear map $\psi : N \rightarrow N$ on an Artinian module $N$, the kernels of $\psi^n$ stabilize as $n$ increases.  To see this, it is sufficient to observe that a $p^{-e}$-linear map corresponding to $\phi : F^e_* M \rightarrow M$ is Matlis dual to a $p^e$-linear map corresponding to $\psi : M^{\vee} \rightarrow F^e_* (M^{\vee})$ (where $M^{\vee}$ is the Matlis dual of $M$).

If we apply $\Hom_R(\blank, E_{R})$ to $\phi$, we obtain a map
\[
 \Hom_R(M, E_{R}) \to[\phi^{\vee}] \Hom_R(F^e_* M, E_{R}),
\]
where $E_R$ is the injective hull of $R/\bm$.  But one also has
\[
\begin{split}
\Hom_R(F^e_* M, E_{R}) &\cong \Hom_R(F^e_* M \tensor_{F^e_* R} F^e_* R, E_{R}) \cong \Hom_{F^e_* R}(F^e_* M, \Hom_R(F^e_* R, E_{R})) \\
&\cong \Hom_{F^e_* R}(F^e_* M, E_{F^e_* R}) \cong F^e_* \Hom_R(M, E_{R}).
\end{split}
\]
This is because, as a commutative ring on its own, $F^e_*R$ has the same ring structure as $R$, and hence $E_{F^e_*R}$ is the same as $E_R$ as abelian groups; but the $R$-module structure of $E_{F^e_*R}$ is via $F^e$, consequently $E_{F^e_*R}\cong F^e_*E_R$ as $R$-modules.  Note that some of the indentifications used are not canonical, but they are unique up to multiplication by units (which does not impact the kernels of the maps).
\end{remark}
\subsection{Test ideals of principal ideals}
With this result at hand we can proceed to show discreteness and rationality for triples $(R, \Delta, f^t)$ where $R$ is $F$-finite and normal and $K_R + \Delta$ is $\bQ$-Cartier with index not divisible by $p$.

The key step is the following Proposition which says that rational numbers of the form $a/(p^{me}-1)$ cannot be accumulation points of jumping numbers (cf. \cite[Remark 2.12]{BlickleMustataSmithFThresholdsOfHypersurfaces}, \cite[Proposition 6.3]{KatzmanLyubeznikZhangOnDiscretenessAndRationality},  and \cite[Theorem 7.12]{SchwedeTakagiRationalPairs}).

\begin{proposition}
\label{PropLessThanRatIsOk}
Suppose that $R$ is an $F$-finite normal ring of characteristic $p > 0$ with a canonical module, and suppose that $\Delta$ is an effective $\bQ$-divisor such that $(p^e - 1)(K_R + \Delta)$ is Cartier  for some integer $e > 0$. Let $f \in R^{\circ}$. Then for every rational number of the form  $\frac{a}{p^{me} - 1}$ with $a,m \in \mathbb{N}$, there exists a positive real number $c < \frac{a}{p^{me} - 1}$ such that $\tau_b(R; \Delta, f^t)$ is constant for $t \in [c, \frac{a}{p^{me} - 1})$.
\end{proposition}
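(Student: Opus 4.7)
The plan is to exploit the fact that $t_0 := a/(p^{me}-1)$ makes the exponents $t_0(p^{nme}-1) = a(1 + p^{me} + \cdots + p^{(n-1)me})$ integers, so $\tau_b(R;\Delta, f^{t_0})$ is computable via iterates of a single Cartier-linear map. Set $e' := me$; since $e \mid e'$, the divisor $(p^{e'}-1)(K_X + \Delta)$ is Cartier, and I fix a corresponding map $\phi \colon F^{e'}_* R \to R$ via the bijection (\ref{EqnGlobalBijection}). Define the $p^{-e'}$-linear endomorphism $\psi$ of $R$ by $\psi(x) := \phi(F^{e'}_*(f^a x))$. An iteration argument (Remark~\ref{RemarkDefinitionOfComposedPInverseLinear}) yields
\[
\psi^n(x) = \phi_{ne'}\bigl(F^{ne'}_*(f^{t_0(p^{ne'}-1)}\,x)\bigr),
\]
where the integer exponent $t_0(p^{ne'}-1)$ coincides with $\lceil t_0(p^{ne'}-1)\rceil$. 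Proposition~\ref{propCharTestSmallestFixed} (applied with $e'$ in place of $e$) then identifies $\tau_b(R;\Delta, f^{t_0})$ as the unique smallest nonzero ideal $J \subseteq R$ satisfying $\psi(J) \subseteq J$.

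Next I would apply Theorem~\ref{TheoremDualHSLForFFinite} to $\psi$ viewed as a $p^{-e'}$-linear map on the coherent module $\O_X = R$: the descending chain $R \supseteq \psi(R) \supseteq \psi^2(R) \supseteq \cdots$ stabilizes, so there is $N \geq 0$ with $\psi^n(R) = \psi^N(R)$ for all $n \geq N$. Choose any positive real $c$ with $\max\{0,\,t_0 - (p^{Ne'}-1)^{-1}\} < c < t_0$. For $t \in [c, t_0)$ and $0 \leq n \leq N$, the inequality $(t_0 - t)(p^{ne'}-1) < 1$ combined with the integrality of $t_0(p^{ne'}-1)$ forces $\lceil t(p^{ne'}-1)\rceil = t_0(p^{ne'}-1)$. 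Using Definition-Proposition~\ref{DefnPropBigTestIdeals}(4) with a big sharp test element $d$, the expansion
\[
\tau_b(R;\Delta, f^t) = \sum_{n \geq 0} \phi_{ne'}\bigl(F^{ne'}_*(d f^{\lceil t(p^{ne'}-1)\rceil}\,R)\bigr)
\]
has its initial $N+1$ summands $t$-independent on $[c, t_0)$, coinciding with the fixed ideal $\sum_{n=0}^{N}\psi^n((d))$.

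To conclude, observe that the family $\{\tau_b(R;\Delta, f^t) : t \in [c, t_0)\}$ is monotone decreasing in $t$ (since $t \leq t'$ implies $\lceil t(p^{ne'}-1)\rceil \leq \lceil t'(p^{ne'}-1)\rceil$ term-by-term), so constitutes a descending chain of ideals in the Noetherian ring $R$. By the descending chain condition, only finitely many distinct ideals appear; replacing $c$ by the largest $t \in [c, t_0)$ at which $\tau_b(R;\Delta, f^t)$ jumps --- or leaving it unchanged if no jump occurs --- produces the required constant $c$. The main content is step two: Theorem~\ref{TheoremDualHSLForFFinite} is the structural input producing an explicit $N$, while the integrality of $t_0(p^{ne'}-1)$ (special to rationals of the form $a/(p^{me}-1)$) is what allows the ``head'' of the expansion to be truly $t$-independent on an explicit neighborhood $[c, t_0)$; the Noetherian chain argument then closes the proof.
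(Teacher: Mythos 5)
Your setup---the map $\psi(x)=\phi(F^{e'}_*(f^a x))$, the observation that $t_0(p^{ne'}-1)$ is an integer for $t_0=a/(p^{me}-1)$, and the appeal to Theorem \ref{TheoremDualHSLForFFinite}---is the right circle of ideas, but the argument does not close. The stabilization theorem is applied to $\psi$ acting on $R$, yet the stabilized images $\psi^n(R)$ are never identified with any test ideal $\tau_b(R;\Delta,f^t)$ for $t<t_0$; the integer $N$ you extract only makes the first $N+1$ summands of the expansion of $\tau_b(R;\Delta,f^t)$ independent of $t$ on $[c,t_0)$, while the tail $n>N$, where $\lceil t(p^{ne'}-1)\rceil$ genuinely varies with $t$, is left completely uncontrolled---and the jumps you must exclude could occur entirely in that tail. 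The missing ingredient is the analogue of Lemma \ref{LemTestIdealAsImage}: since $\phi_{e}(F^{e}_*\tau_b(R;\Delta))=\tau_b(R;\Delta)$, the infinite sum defining the test ideal collapses to a \emph{single} image when the coefficient module is $\tau_b(R;\Delta)$ and the exponent has a power of $p$ in its denominator, giving $\tau_b(R;\Delta,f^{a/p^{m}})=\phi_{ne}\bigl(F^{ne}_* f^{ap^{ne-m}}\tau_b(R;\Delta)\bigr)$ for $ne\geq m$. One then applies Theorem \ref{TheoremDualHSLForFFinite} to $\psi$ restricted to the coherent module $\tau_b(R;\Delta)$ rather than to $R$: one gets $\psi^n(\tau_b(R;\Delta))=\tau_b\bigl(R;\Delta,f^{t_n}\bigr)$ with $t_n=\frac{a}{p^{ne}}\cdot\frac{p^{ne}-1}{p^{e}-1}$ increasing to $t_0$, so stabilization of these images says the test ideal is constant along a sequence of parameters converging to $t_0$ from below, and monotonicity in $t$ then handles every $t\in[t_N,t_0)$. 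This is the paper's proof, and it is the step your head/tail decomposition does not supply.

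The concluding step of your proposal is moreover false as stated: a Noetherian ring satisfies the \emph{ascending} chain condition, not the descending one (consider $(x)\supsetneq(x^2)\supsetneq(x^3)\supsetneq\cdots$ in $k[x]$), so one cannot conclude that the decreasing family $\{\tau_b(R;\Delta,f^t)\}_{t\in[c,t_0)}$ assumes only finitely many values, nor that there is a ``largest $t$ at which it jumps.'' Indeed, if that argument were valid it would prove discreteness of $F$-jumping numbers outright, with no need for Theorem \ref{TheoremDualHSLForFFinite}, for the special shape $a/(p^{me}-1)$, or for any of the machinery of this section; ruling out an infinite strictly decreasing chain of test ideals as $t$ increases to $t_0$ is precisely the content of the proposition, and it is exactly what the stabilization-of-images theorem, applied to the correct module, is needed for.
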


Before we can prove this, we need a description of the test ideal $\tau_b(R;\Delta,f^{\frac{a}{p^m}})$ as an image of an appropriate $p^{-ne}$-linear map.
\begin{lemma}
\label{LemTestIdealAsImage}
In the context of Proposition \ref{PropLessThanRatIsOk}, assume furthermore that $\Hom_R(F^e_* R((p^e - 1)\Delta), R)$ is free as an $F^e_* R$-module. Suppose that $\phi_e : F^e_* R \rightarrow R$ corresponds to $\Delta$ via Equation (\ref{EqnGlobalBijection}) on page \pageref{EqnGlobalBijection}. Then
\[
    \phi_{ne}(F^{ne}_* f^{p^{ne} a/p^m}\tau_b(R;\Delta) ) = \tau_b(R;\Delta, f^{a/p^m})
\]
for any $ne \geq m$.
\end{lemma}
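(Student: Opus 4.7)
The plan is to use Lemma~\ref{LemmaAlternateUltraGeneralizedTestIdeal} (applied with $Q=(0)$, $\ba = (f)$, and $t = a/p^m$) to write $\tau_b(R;\Delta, f^{a/p^m})$ as the union of the increasing chain
\[
I_n \defeq \phi_{ne}\bigl(F^{ne}_* f^{\lceil (a/p^m) p^{ne}\rceil}\, \tau_b(R;\Delta)\bigr), \qquad n \geq 1,
\]
and then to show that the sequence $\{I_n\}$ is already constant in the range $ne \geq m$. Once this stabilization is established the common value equals the union, which is the claim. Note that for $ne \geq m$ the exponent $(a/p^m)p^{ne} = a p^{ne-m}$ is a non-negative integer, so $I_n = \phi_{ne}\bigl(F^{ne}_* f^{a p^{ne-m}}\, \tau_b(R;\Delta)\bigr)$, which matches the formula in the statement.

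For the stabilization I would combine the functorial decomposition $\phi_{(n+1)e} = \phi_{ne}\circ F^{ne}_* \phi_e$, the identity $f^{a p^{(n+1)e - m}} = \bigl(f^{a p^{ne-m}}\bigr)^{p^e}$, and the $p^{-e}$-linearity of $\phi_e$ (pulling the $p^e$-th power $f^{a p^{ne-m}}$ through $\phi_e$) to compute
\[
I_{n+1} = \phi_{ne}\bigl(F^{ne}_* f^{a p^{ne-m}}\, \phi_e(F^e_* \tau_b(R;\Delta))\bigr).
\]
Thus the equality $I_{n+1} = I_n$ reduces to the single self-map identity $\phi_e\bigl(F^e_* \tau_b(R;\Delta)\bigr) = \tau_b(R;\Delta)$, which is where the main work lies.

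For this self-map equality I would apply Proposition~\ref{propCharTestSmallestFixed} (with $\ba = R$ and $\sL = \O_X$), which characterizes $\tau_b(R;\Delta)$ as the unique smallest nonzero ideal $J$ such that $\phi_{ke}(F^{ke}_* J) \subseteq J$ for every $k \geq 0$. Setting $J' \defeq \phi_e(F^e_*\tau_b(R;\Delta))$ we clearly have $J' \subseteq \tau_b(R;\Delta)$, and using the alternative decomposition $\phi_{(k+1)e} = \phi_e \circ F^e_*\phi_{ke}$ one sees
\[
\phi_{ke}(F^{ke}_* J') = \phi_e\bigl(F^e_*\phi_{ke}(F^{ke}_*\tau_b(R;\Delta))\bigr) \subseteq \phi_e(F^e_*\tau_b(R;\Delta)) = J',
\]
so $J'$ is itself $\phi_{ke}$-stable for every $k$.

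The remaining and main obstacle is to verify $J' \neq 0$; once this is in hand, minimality forces $J' = \tau_b(R;\Delta)$ and the proof is complete. The non-vanishing follows from the freeness hypothesis: since $\Hom_R(F^e_*R((p^e-1)\Delta), R)$ is a rank-one free $F^e_*R$-module generated by $\phi_e$, the $F^e_*R$-linear map $F^e_*R \to \Hom_R(F^e_*R, R)$ sending $c$ to the map $x \mapsto \phi_e(cx)$ is injective. Picking any $0 \neq c \in \tau_b(R;\Delta)$, the homomorphism $\phi_e(c\cdot \blank)$ is therefore nonzero, and its image is contained in $J'$, so $J' \neq 0$ as required.
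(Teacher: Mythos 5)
Your proposal is correct and follows essentially the same route as the paper: express $\tau_b(R;\Delta,f^{a/p^m})$ via Lemma \ref{LemmaAlternateUltraGeneralizedTestIdeal} (with $Q=(0)$) as the union of the chain $\phi_{ne}(F^{ne}_*f^{ap^{ne-m}}\tau_b(R;\Delta))$, then show the chain is constant for $ne\geq m$ by splitting $\phi_{(n+1)e}=\phi_{ne}\circ F^{ne}_*\phi_e$ and invoking $\phi_e(F^e_*\tau_b(R;\Delta))=\tau_b(R;\Delta)$. The only difference is that you reprove this last identity via Proposition \ref{propCharTestSmallestFixed} (stability plus nonvanishing), whereas the paper simply cites it, having already established it in the remark following Definition \ref{DefnUltraGeneralizedTestIdeal}.
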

\begin{proof}
 First note that for any $n_0$,
\[
\begin{split}
\tau_b(R;\Delta, f^{a/p^m}) = \sum_{n, ne \geq m} \phi_{ne}(F^{ne}_* f^{p^{ne} a/p^m} \tau_b(R;\Delta)) = \\
\sum_{n, ne \geq m} \phi_{ne}(F^{ne}_* f^{a p^{ne - m} } \tau_b(R;\Delta)) = \sum_{n \geq n_0, ne \geq m} \phi_{ne}(F^{ne}_* f^{a p^{ne - m} } \tau_b(R;\Delta)).
\end{split}
\]
But then, for any $n$ such that $ne > m$ we observe that
\[
\begin{split}
 \phi_{(n+1)e} \left(F^{(n+1)e}_* f^{a p^{(n+1)e - m} } \tau_b(R;\Delta)\right) =
\phi_{ne}\left(F^{ne}_* \phi_e\left(F^{e}_*\left( f^{a p^{ne - m} } \right)^{p^e} \tau_b(R;\Delta) \right) \right)=
\\
=\phi_{ne}\left(F^{ne}_* f^{ap^{ne - m} } \phi_e\left(F^{e}_*(\tau_b(R;\Delta)) \right) \right) = \phi_{ne}\left(F^{ne}_* f^{ap^{ne - m} } \tau_b(R;\Delta) \right).
\end{split}
\]
since $\phi_e\left(F^{e}_*(\tau_b(R, \Delta)) \right) = \tau_b(R, \Delta)$.  But this means that the sum stabilizes at the first stage, as desired.
\end{proof}

Now we are in a position to prove Proposition \ref{PropLessThanRatIsOk}.

\begin{proof}[Proof of Proposition \ref{PropLessThanRatIsOk}]
It is sufficient to check the statement at a finite number of affine charts of $\Spec R$.  Thus we may assume that $\Hom_R(F^e_* R( (p^e - 1) \Delta), R)$ is free so that $\phi_e : F^e_* R \rightarrow R$ corresponds to $\Delta$.
Without loss of generality we may also assume that $m = 1$ where $m$ is from the statement of Proposition \ref{PropLessThanRatIsOk}, since if not, simply replace $e$ by $me$.

Now consider the map $\psi_e : F^e_* \tau_b(R;\Delta) \rightarrow \tau_b(R;\Delta)$ defined by $\psi_e(\blank) = \phi_e (f^{a} \times \blank )$. The image of $\psi_e$ is contained in $\tau_b(R;\Delta)$ because the image of $\phi_e$ is. Note also that in our convention from Remark \ref{RemarkDefinitionOfComposedPInverseLinear} we have $\psi_{ne}(\blank)=\phi_{ne}(f^{a(1 + p^e + p^{2e} + \dots + p^{(n-1)e}) } \times \blank)$. Now, the preceding lemma implies that
\[
\begin{split}
\psi_{ne}(F^{ne}_* \tau_b(R;\Delta)) = \phi_{ne}(F^{ne}_* f^{a(1 + p^e + p^{2e} + \dots + p^{(n-1)e}) }\tau_b(R;\Delta)) = \\
= \tau_b(R;\Delta, f^{a (1 + p^e + p^{2e} + \dots + p^{(n-1)e}) \over p^{ne} }) = \tau_b(R;\Delta, f^{{a \over p^{ne}}{p^{ne} - 1 \over p^e - 1}}).
\end{split}
\]
By Theorem \ref{ThmCartierImagesStabilize} there exists $N$ sufficiently large such that
\[
 \psi_{Ne}(F^{Ne}_* \tau_b(R;\Delta)) = \psi_{(N+1)e}(F^{(N+1)e}_* \tau_b(R;\Delta)) = \ldots
\]
and so we also have
\[
\tau_b(R;\Delta, f^{{a \over p^{Ne}}{p^{Ne} - 1 \over p^e - 1}}) = \tau_b(R;\Delta, f^{{a \over p^{(N+1)e}}{p^{(N+1)e} - 1 \over p^e - 1}}) = \ldots
\]
for the same $N$.  But then notice that as $n$ increases, ${a \over p^{(n+1)e}}{p^{(n+1)e} - 1 \over p^e - 1}$ approaches $\frac{a}{p^e-1}$ from below.  Thus choose $c = {a \over p^{Ne}}{p^{Ne} - 1 \over p^e - 1}$.
\end{proof}

As our final ingredient we state a trivial extension of \cite[Proposition 3.3]{KatzmanLyubeznikZhangOnDiscretenessAndRationality} the proof of which is also verbatim the same:
\begin{proposition}
\label{propIrrationalEmpty}
 Let $C \subset [0,m] \setminus \bQ$ be a set with the following properties:
\begin{itemize}
 \item[(a)]  for all $c \in C$, there exists an $\epsilon \geq 0$ such that $(c, c+\epsilon) \cap C = \emptyset$.
 \item[(b)]  $C$ is closed in $[0, m]$.
 \item[(c)]  For some fixed integer $e$, $\{p^{ne} c\} \in C$ for all $c \in C$ and $n \geq 1$.
\end{itemize}
Then $C$ is empty.
\end{proposition}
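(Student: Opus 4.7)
The plan is to suppose $C \ne \emptyset$ and extract from $C$ a non-empty \emph{finite} subset that is invariant under the shift map $T\colon [0,m] \to [0,1)$, $T(x) := \{p^e x\}$; any such finite $T$-invariant set then contains a periodic point of $T$, which must be rational, contradicting $C \cap \bQ = \emptyset$.

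First I would establish that $C$ is countable. Reading condition~(a) with the strict inequality $\varepsilon > 0$, each $c \in C$ has a right-gap $(c, c + \varepsilon_c)$ disjoint from $C$; these open intervals are pairwise disjoint, since if $c < c'$ lie in $C$ then $c' \notin (c, c + \varepsilon_c)$ forces $c' \ge c + \varepsilon_c$. Picking a rational from each yields an injection $C \hookrightarrow \bQ$.

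Next I would form the Cantor--Bendixson derived sequence $C = C^{(0)} \supseteq C^{(1)} \supseteq \cdots$ of compact subsets of $[0,m]$: at a successor $\alpha+1$, pass to the set of accumulation points of $C^{(\alpha)}$; at a limit ordinal take intersections. By condition~(c) we have $T(C) \subseteq C$; the crucial claim is that each $C^{(\alpha)}$ is $T$-invariant. This holds because $C \subseteq [0,m] \setminus \bQ$, $T$ is continuous at every irrational point, and $T$ is at most $p^e$-to-one: a sequence $(c_n)$ in $C^{(\alpha)}$ converging to $\gamma$ with $c_n \ne \gamma$ is carried by $T$ to a sequence converging to $T(\gamma)$ with $T(c_n) \ne T(\gamma)$ for $n \gg 0$ (otherwise $p^e(c_n - \gamma) \in \bZ$, impossible for $c_n \to \gamma$, $c_n \ne \gamma$). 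Hence $T(\gamma) \in C^{(\alpha+1)}$.

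Because $C$ is countable, the perfect kernel $\bigcap_\alpha C^{(\alpha)}$ must be empty --- otherwise it would be a non-empty compact perfect subset of $\bR$, and therefore uncountable by the Baire category theorem. Compactness and the finite intersection property then force the stopping ordinal to be a successor, so there is some $\alpha_0$ with $F := C^{(\alpha_0)}$ non-empty and $C^{(\alpha_0 + 1)} = \emptyset$. Hence $F$ is a non-empty closed discrete bounded subset of $\bR$, i.e.\ finite, and it is $T$-invariant by the previous paragraph.

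Finally, $T$ restricted to the finite set $F$ has a periodic orbit: after replacing $\gamma$ by $T(\gamma) \in F \cap [0,1)$ if necessary, there is $\gamma \in F \cap [0,1)$ with $T^k(\gamma) = \gamma$ for some $k \ge 1$. Then $\{p^{ke} \gamma\} = \gamma$ rearranges to $(p^{ke} - 1)\gamma = \lfloor p^{ke}\gamma \rfloor \in \bZ$, making $\gamma$ rational --- contradicting $F \subseteq C \subseteq [0,m] \setminus \bQ$. The main technical point is the $T$-invariance of the Cantor--Bendixson derivatives; this is the one step where continuity of $T$ at irrationals (and its finite-to-one behaviour there) is essential. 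Everything else combines standard facts about countable compact sets with the trivial observation that periodic points of $T$ are rational.
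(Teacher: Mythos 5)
Your argument is correct, and it is worth noting that the paper itself does not actually write out a proof of this proposition: it appeals to Katzman--Lyubeznik--Zhang, Proposition 3.3, asserting that ``the proof is verbatim the same,'' so what you have produced is a genuinely self-contained argument rather than a transcription of the paper's. The substance is sound at every step: reading (a) with $\varepsilon>0$ is indeed the intended reading (with $\varepsilon\geq 0$ the condition is vacuous and the statement is false, e.g.\ for the closure of the orbit of a number with aperiodic Sturmian base-$p$ expansion), the right-gaps are pairwise disjoint so $C$ is countable, and the Cantor--Bendixson derivatives are stable under $T(x)=\{p^{e}x\}$ precisely because $T$ is continuous at irrational points and $T(c_n)=T(\gamma)$ would force $p^{e}(c_n-\gamma)\in\bZ$, which fails once $0<|c_n-\gamma|<p^{-e}$; countability kills the perfect kernel, compactness rules out a limit stopping ordinal, and a nonempty finite $T$-invariant set yields a periodic point $\gamma$ with $(p^{ke}-1)\gamma\in\bZ$, contradicting irrationality. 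Compared with the KLZ-style argument the paper defers to, your packaging is heavier machinery doing the same dynamical work: the essential inputs are identical (invariance of accumulation-point sets under $x\mapsto\{p^{e}x\}$, and the dichotomy ``irrational $\Rightarrow$ infinite orbit'' versus ``periodic $\Rightarrow$ rational''), and the transfinite derivative hierarchy could be replaced either by taking, via Zorn's lemma and compactness, a minimal nonempty closed $T$-invariant subset of $C$ (which is then finite or perfect, both impossible), or by ending your own argument one step earlier: a nonempty finite $T$-invariant subset of $C$ already contains the (infinite) orbit of an irrational number, so the explicit periodic-point computation is not even needed. Two cosmetic remarks: the ``at most $p^{e}$-to-one'' observation plays no role in your invariance claim, and only the case $n=1$ of hypothesis (c) is ever used.
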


\begin{theorem}  \label{ThmFFinitePrincipalCase}
Let $X$ be a normal $F$-finite scheme with a canonical sheaf and suppose that $\Delta$ is a $\bQ$-divisor on $\Spec R$ such that $(p^e - 1)(K_R + \Delta)$ is Cartier.  Let $\ba \subseteq \O_X$ be a locally principal ideal sheaf.  Then the set of $F$-jumping numbers of $\tau_b(X; \Delta, \ba^*)$ is a discrete set of rational numbers.
\end{theorem}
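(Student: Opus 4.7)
The strategy, following \cite{KatzmanLyubeznikZhangOnDiscretenessAndRationality}, is to combine Proposition \ref{PropLessThanRatIsOk} (which is where Theorem \ref{ThmCartierImagesStabilize} enters) with Proposition \ref{propIrrationalEmpty}. By Proposition \ref{PropPassToAffineCover} I reduce to the affine case $X=\Spec R$, and by refining the cover further I may assume the line bundle $\O_X((p^{e_0}-1)(K_X+\Delta))$ is trivial, so a global map $\phi_{e_0}\colon F^{e_0}_*R\to R$ corresponding to $\Delta$ is at hand. Lemma \ref{LemmaPositiveIncreaseOk} immediately excludes accumulation of jumping numbers from above, so any accumulation point $\alpha$ must be approached by a strictly increasing sequence $\alpha_i\nearrow\alpha$ of jumping numbers; such an $\alpha$ is itself a jumping number, since for any $\epsilon>0$ one may pick $i$ with $\alpha-\epsilon<\alpha_i<\alpha_{i+1}$, and then the jumping-number property at $\alpha_{i+1}$ yields
\[
\tau_b(X;\Delta,\ba^{\alpha-\epsilon})\supseteq\tau_b(X;\Delta,\ba^{\alpha_i})\supsetneq\tau_b(X;\Delta,\ba^{\alpha_{i+1}})\supseteq\tau_b(X;\Delta,\ba^{\alpha}).
\]

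The crucial step is to exclude \emph{rational} accumulation points from below. Suppose $\alpha=s/q$ is such a point in lowest terms, and write $q=p^j q'$ with $\gcd(p,q')=1$. Pick $n\ge 1$ with $ne_0\ge j$ and pick $m\ge 1$ with $p^{me_0}\equiv 1\pmod{q'}$, which exists since $p^{e_0}$ has finite order in $(\bZ/q'\bZ)^\times$. A short computation then shows $p^{ne_0}\alpha-k=a/(p^{me_0}-1)$ for $k=\lfloor p^{ne_0}\alpha\rfloor$ and some $a\in\{1,\dots,p^{me_0}-2\}$. Now transport the approximating sequence: each $p^{ne_0}\alpha_i$ is a jumping number by Lemma \ref{LemmaPAlphaJumps}, and once $p^{ne_0}\alpha_i>k$ (which holds for $i\gg 0$), applying Skoda (Corollary \ref{CorOfSkodaTypeTheoremForJumping}) $k$ times in succession produces jumping numbers $p^{ne_0}\alpha_i-k$ that strictly increase to $a/(p^{me_0}-1)$ from below---in direct contradiction to Proposition \ref{PropLessThanRatIsOk}.

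It remains to exclude irrational jumping numbers. I apply Proposition \ref{propIrrationalEmpty} to
\[
C:=\{c\in[0,m]\setminus\bQ : c \text{ is an $F$-jumping number of } \tau_b(X;\Delta,\ba^t)\}
\]
for each fixed integer $m\ge 1$. Property (a) (right-isolation) is immediate from Lemma \ref{LemmaPositiveIncreaseOk}; property (c) (invariance of $C$ under $c\mapsto\{p^{ne_0}c\}$) follows from Lemma \ref{LemmaPAlphaJumps} (noting that $p^{ne_0}c$ remains irrational) combined with iterated Skoda to return the fractional part into $(0,1)$; and property (b) (closedness of $C$ in $[0,m]$) follows from the previous two paragraphs, since any convergent sequence in $C$ must accumulate from below, the limit is then a jumping number by the first paragraph, and by the second paragraph that limit cannot be rational. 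Proposition \ref{propIrrationalEmpty} thus yields $C=\emptyset$, and combined with the discreteness established in the first two paragraphs this completes the proof. The principal obstacle is the rational-accumulation step: Proposition \ref{PropLessThanRatIsOk} directly handles only rationals of the very specific shape $a/(p^{me_0}-1)$, and bridging to an arbitrary rational requires the careful combination of Frobenius periodicity (Lemma \ref{LemmaPAlphaJumps}) with the principal-ideal Skoda estimate (Corollary \ref{CorOfSkodaTypeTheoremForJumping}).
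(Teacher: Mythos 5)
Your overall route is the same as the paper's: reduce to the affine, locally free situation via Proposition \ref{PropPassToAffineCover}, use Lemma \ref{LemmaPositiveIncreaseOk} to force any accumulation to be from below (and to see the limit is itself a jumping number), rule out rational accumulation points by combining Lemma \ref{LemmaPAlphaJumps} with Proposition \ref{PropLessThanRatIsOk}, and eliminate irrational jumping numbers with Proposition \ref{propIrrationalEmpty}; the paper's proof is exactly this, stated in a few lines.

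The one place where you go beyond the paper is also the one place where your write-up is not quite right. In the rational-accumulation step you insert a Skoda reduction so as to land on a number $a/(p^{me_0}-1)$ with $1\le a\le p^{me_0}-2$. This detour is unnecessary: Proposition \ref{PropLessThanRatIsOk} is stated for \emph{every} $a,m\in\bN$, so once Lemma \ref{LemmaPAlphaJumps} has removed $p$ from the denominator, i.e.\ once $p^{ne_0}\alpha=s'/q'$ with $\gcd(q',p)=1$, you may write $p^{ne_0}\alpha=a/(p^{me_0}-1)$ with $a=s'(p^{me_0}-1)/q'\in\bN$ and contradict the proposition directly, since the $p^{ne_0}\alpha_i$ are jumping numbers increasing to it. Moreover, as written your reduction fails in the edge case where the denominator of $\alpha$ is a pure power of $p$ (in particular when $\alpha$ is an integer): then $p^{ne_0}\alpha=k$ is an integer, its fractional part is $0$ so there is no $a\ge 1$, and the condition ``$p^{ne_0}\alpha_i>k$'' never holds because $p^{ne_0}\alpha_i<p^{ne_0}\alpha=k$, so Corollary \ref{CorOfSkodaTypeTheoremForJumping} cannot be applied $k$ times. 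The repair is immediate (subtract only $k-1$, so the limit is $1=(p^{me_0}-1)/(p^{me_0}-1)$, or better, drop the subtraction altogether as above). With that correction the argument, including your verification of the hypotheses of Proposition \ref{propIrrationalEmpty}, is complete and coincides with the paper's.
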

\begin{proof}
The proof follows along the lines of the proof of \cite[Theorem 3.1]{KatzmanLyubeznikZhangOnDiscretenessAndRationality}.  Without loss of generality, we may assume that $X = \Spec R$ and that $\ba = (g)$. We first note there cannot be any rational numbers that are accumulation points of $F$-jumping numbers.  To see this, suppose that $r \in \bQ$ was such an accumulation point (note this implies that $r$ is also is an $F$-jumping number).  We can then use Lemma \ref{LemmaPAlphaJumps} to remove $p$ from the denominator of $r$ and use Proposition \ref{PropLessThanRatIsOk} to obtain the contradiction. Now Proposition \ref{propIrrationalEmpty} rules out any irrational accumulation points of jumping numbers.
\end{proof}

\section{Further questions and remarks}
\label{SectionFurtherRemarks}

While in this paper we have proven discreteness and rationality of $F$-jumping numbers in essentially the same generality as is known in characteristic zero (which is to say, for normal $\bQ$-Gorenstein varieties over an algebraically closed field), a number of possible generalizations remain.

\begin{question}
Suppose that $R$ is normal, $F$-finite $\bQ$-Gorenstein but the index is divisible by $p > 0$?  Is it true that the set of $F$-jumping numbers of $\tau_b(R; \ba^t)$ are discrete and rational?
\end{question}

\begin{question}
Suppose that $R$ is $F$-finite and normal (but not necessarily $\bQ$-Gorenstein).  Is it true that the set of $F$-jumping numbers of $\tau_b(R; \ba^t)$ are discrete and rational?
\end{question}

\begin{remark}
In \cite{DeFernexHacon}, de~Fernex and Hacon define a multiplier ideal for normal non-$\bQ$-Gorenstein varieties.  It is an open question whether the jumping numbers of de~Fernex and Hacon's multiplier ideal are discrete and rational.  See \cite[Remark 4.10]{DeFernexHacon}
\end{remark}

\begin{question}
Suppose that $R$ is normal and $\bQ$-Gorenstein with index not divisible by $p > 0$.  Further suppose that $R$ is not (necessarily) $F$-finite.  Are the set of $F$-jumping numbers for $\tau_b(R, \ba^t)$ discrete and rational?  Note that one needs $R$ to be local (or at least finite type over an excellent semi-local ring) for us to know that test elements even exist.  We expect that in the local case, one can dualize the techniques from Section \ref{SectionDiscretenessAndRationalityForPrincipal} (and combine them with ideas from \cite{KatzmanLyubeznikZhangOnDiscretenessAndRationality}) to answer this question.  We do not attempt this here however.
\end{question}

\begin{remark}
One setting that we could easily generalize to is the non-normal case.  However, on non-normal varieties, the notion of $\bQ$-Gorenstein (or $\bQ$-divisors in general) may not even make sense and, at best, is much harder to work with, see for example \cite[Chapter 16]{KollarFlipsAndAbundance}.

However, inspired by the correspondence between effective $\bQ$-divisors and $\O_X$-linear maps $\phi_e : F^e_* \sL \rightarrow \O_X$ found in \ref{EqnGlobalBijection} on page \pageref{EqnGlobalBijection}, instead of triples $(X, \Delta, \ba^t)$, one should consider triples $(X, [\phi_e], \ba^t)$.  Here $[\phi_e]$ is an appropriately defined equivalence class of maps.  The equivalence relation is generated by the following two equivalences.
\begin{itemize}
\item[(1)]  $\phi_e \sim \phi_{e}'$ (note the same $e$) if there is a diagram just as the one below equation (\ref{EqnGlobalBijection}) on page \pageref{EqnGlobalBijection}.
\item[(2)]  $\phi_{ne} \sim \phi_e$ where $\phi_{ne}$ is defined as in Remark \ref{RemarkDefinitionOfComposedPInverseLinear}.
\end{itemize}
For such triples, $(X, [\phi_e], \ba^t)$, we believe that essentially all the results in this paper hold true. A generalization of this viewpoint is undertaken by the second author in \cite{Schwede.NonQGor} to study test ideals in the non $Q$-Gorenstein setting.

\end{remark}

\begin{remark}
Another way which the results of this paper can easily be generalized is to prove a general discreteness and rationality result for the $F$-jumping numbers of $\tau_b(X, \nsubseteq Q; \Delta, \ba^t)$ for $X$, $\Delta$ and $Q \in \Spec R$ satisfying the conditions of Definition \ref{DefnUltraGeneralizedTestIdeal} (at least if one assumes that $X$ is essentially of finite type over an $F$-finite field or if one assumes that $X$ is $F$-finite and $\ba$ is locally principal).
\end{remark}

\providecommand{\bysame}{\leavevmode\hbox to3em{\hrulefill}\thinspace}
\providecommand{\MR}{\relax\ifhmode\unskip\space\fi MR}
\providecommand{\MRhref}[2]{%
  \href{http://www.ams.org/mathscinet-getitem?mr=#1}{#2}
}
\providecommand{\href}[2]{#2}

\end{document}